\theoremstyle{plain}
\newtheorem{theorem}{Theorem}
\newtheorem{corollary}[theorem]{Corollary}
\newtheorem{lemma}[theorem]{Lemma}
\newtheorem{proposition}[theorem]{Proposition}
\newtheorem{conjecture}[theorem]{Conjecture}
\theoremstyle{definition}
\newtheorem{example}[theorem]{Example}
\theoremstyle{remark}
\newtheorem{remark}[theorem]{Remark}
\newcommand{\FF}{\mathcal{F}}
\newcommand{\ZZ}{\mathbb{Z}}
\begin{document}

\title{Ending States of a Special Variant of the Chip-Firing Algorithm}
\author{Tanya Khovanova and Rich Wang}
\date{}

\maketitle

\begin{abstract}
We investigate a special variant of chip-firing, in which we consider an infinite set of rooms on a number line, some of which are occupied by violinists. In a move, we take two violinists in adjacent rooms, and send one of them to the closest unoccupied room to the left and the other to the closest unoccupied room to the right. We classify the different possible final states from repeatedly performing this operation. We introduce numbers $R(N,\ell,x)$ that count labeled recursive rooted trees with $N$ vertices, $\ell$ leaves, and the smallest rooted path ending in $x$. We describe the properties of these numbers and connect them to permutations. We conjecture that these numbers describe the probabilities ending with different final states when the moves are chosen uniformly.
\end{abstract}

\pagenumbering{arabic} 
\section{Introduction}

The chip-firing process on a number line, in which one takes some number of chips at position $i$ and ``fires'' some of them to position $i-1$, and some of them to position $i+1$, has been studied extensively. It was first described by Spencer \cite{chipFireStart} as a balancing game on a collection of vectors in the max norm. In Spencer's variant of chip-firing, one would take all of the chips in a pile and split them as evenly as possible among the neighboring piles. The classical variant of chip-firing, in which one takes two chips from a pile and ``fires'' one to the pile on its left and one to the pile on its right, was later explored by Anderson, Lov\'asz, Shor, Spencer, Tardos, and Winograd \cite{chipFire2}. 

The chip-firing process was found to be generalizable to many other fields and was moved from a line to any graph. Bak, Tang, and Wiesenfeld \cite{abelianSandpile1} examined chip-firing on a graph in the context of dynamical systems, finding the process to be related to systems like pendulums and the stock market. Dhar \cite{abelianSandpile2} expanded upon this by showing that their model satisfied abelian dynamics and coined the term ``abelian sandpile model.''

One important property of chip-firing algorithms is \textit{confluence}, or the property where the final state does not depend on the order of moves performed, which was first analyzed by Bj\"orner, Lov\'asz, and Shor \cite{chipFire3}, and is a consequence of Newman's Lemma \cite{confluence1}. This property can be found in both the classical chip-firing algorithm and in many modifications, such as the one analyzed by Hopkins, McConville, and Propp \cite{sortChipFire}. 

However, a slight modification to the classic chip-firing process changes the situation significantly. We instead take two chips at positions $i$ and $i+1$ and ``fire'' them to the closest positions to their left and right, respectively, that do not have a chip. This new chip-firing variant is not confluent, which makes it interesting to study. Standard chip-firing operations are usually local, meaning that each move they perform changes the position of the chips by a fixed distance. Our dynamical system can move a chip arbitrarily far away from the original position. Darij Grinberg suggests to call this operation \textit{a two-sided dispersion}.

To describe our process, we use the analogy of violinists staying in hotel rooms. The following problem is in the center of our paper.

\begin{quote} 
\textbf{Problem.} Consider a hotel with an infinite number of rooms arranged sequentially on the ground floor. The rooms are labeled from left to right by integers, with room $i$ being adjacent to rooms $i - 1$ and $i + 1$. Room $i-1$ is on the left of room $i$, while room $i+1$ is on the right.

A finite number of violinists are staying in the hotel; each room has at most one violinist in it. Each night, some two violinists staying in adjacent rooms (if two such violinists exist) decide they cannot stand each other's noise and move apart: One of them moves to the nearest unoccupied room to the left, while the other moves to the nearest unoccupied room to the right. This keeps happening for as long as there are two violinists in adjacent rooms.
Prove that this moving will stop after a finite number of days.
\end{quote}

\begin{example}\label{ex:fClusteron4}
Suppose we have four violinists with their initial placement 0001111000. From the initial state, we have three options, which we call L, M, and R. Regardless of which is chosen, in the next state, we have two options, which we call L and R. After that, we have only one move in each case. This is shown in Figure~\ref{fig:4exampletree}.

\begin{figure}[ht!]
    \centering                      
        \scalebox{0.7}{
            \begin{forest}
                [$0001111000$
                [$0010011100$
                [$0010100110$[$0010101001$]]
                [$0010110010$[$0011001010$[$0100101010$]]]
                ]
                [$0011001100$
                [$0100101100$[$0100110010$[$0101001010$]]]
                [$0011010010$[$0100110010$[$0101001010$]]]
                ]
                [$0011100100$
                [$0100110100$[$0101001100$[$0101010010$]]]
                [$0110010100$[$1001010100$]]
                ]
                ]
            \end{forest}
        }
        \caption{Tree diagram for possible moves for 4 violinists.}
    \label{fig:4exampletree}
\end{figure}
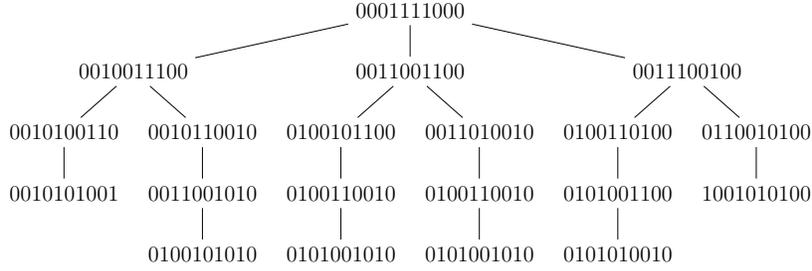
\end{example}

In Figure~\ref{fig:4exampletree}, we can also see that there are $5$ different final states. The lack of a fixed final state opens up many possible areas of study on the possible final states.

We start with the solution to the problem in Section~\ref{sec:solution}. 
We introduce some important definitions, including the centroid and the sumtroid, and cover the relevant preliminary results in Section~\ref{sec:preliminaries}. We also introduce an object of our study--- a clusteron, which is an initial state that consists of a set of consecutive occupied rooms and allows multiple occupancy.

In Section~\ref{sec:bijection}, we show that starting from a clusteron, we always achieve a final state with single occupancy rooms. We also introduce a bijection from states with only one violinist per room to a state that considers the sequence of clusterons in that state.

We characterize all possible final states of a single clusteron in Section~\ref{sec:fsclusterons}. In particular, we show that all final states have a single gap of size $2$ and the rest of their gaps size $1$. We also show that final states are uniquely determined by their centroids.

Section~\ref{sec:interstates} is devoted to intermediate states.

In Section~\ref{sec:fsprobability}, we conjecture that if possible moves are made to be equally probable for any clusteron with single-occupancy rooms, then the probability of achieving each of the claimed final states from Section~\ref{sec:fsclusterons}, up to translation, is the same. In particular, in Figure~\ref{fig:4exampletree}, one can see that if one starts at the top and traverses down each branch of the tree with equal probability, then the probabilities of obtaining a state congruent to $10101001$, $10100101$, and $10010101$ up to translation and ignoring zeroes on the boundary, are equal, $\frac{1}{3}$ in each case.

In Subection~\ref{sec:recursivetrees}, we introduce numbers $R(N,\ell,x)$ that count the number of labeled recursive routed trees (also known as increasing Cayley trees as seen in Stanley \cite{Stanley}) with $N$ vertices, $\ell$ leaves, and with the smallest rooted path ending in $x$. We conjecture that $R(N,\ell,x)$ equals $(N-1)!$ times the probability that the initial state with $N$ violinists ends in a final state with a particular sumtroid which can be defined in terms of $\ell$ and $x$. We prove a recursive formula for numbers $R(N,\ell,x)$.

In Section~\ref{sec:permutations}, we show that the numbers $R(N,\ell,x)$ are the same numbers as studied by Conger \cite{Conger} and that they are equal to the number of permutations of $N-1$ with $\ell-1$ descents and last digit $x-1$.

Section~\ref{sec:additionaldata} contains additional data.

\section{Solution to the Problem}\label{sec:solution}

We denote the total number of violinists as $N$. Let us regard the violinists as indistinguishable (i.e., we do not care which violinist is in which room, but only care about which rooms are occupied). Thus, we can identify any state with a finite subset of integers. 

Since we only care about which rooms are occupied, we can redefine the operation as follows: Choose two adjacent violinists $v_1,v_2$, and say $v_1$ is to the left of $v_2$. We move $v_1$ into the room directly on his left. If this room is currently occupied by some violinist $v_3$, then we move $v_3$ into the room on his left as well. And so on, until a violinist that we move enters an unoccupied room. A similar operation will occur for $v_2$, but moving to the right. This way the order of the violinists from left to right is always preserved. If there are multiple violinists in a room, then we consider them to be ordered left to right in some manner. We call this adjusted procedure \textit{chip-pushing}.

When considering the operation as chip-pushing, the leftmost violinist can only move to the left, while the rightmost only to the right. Moreover, the leftmost violinist will continue to be the leftmost violinist in all future states. A similar statement holds true for the rightmost violinist.

We define a \textit{gap} to be the number of unoccupied rooms between two occupied rooms that do not have any other occupied rooms between them, and an \textit{x-gap} to be a gap of size $x$.
  
We define the \textit{span} of a state to be the number of rooms between the leftmost and rightmost violinists, including the rooms that these two violinists occupy.

\begin{theorem}
The process of violinists moving to different rooms cannot continue indefinitely.
\end{theorem}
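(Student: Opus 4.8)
The plan is to find a quantity that strictly decreases (or strictly increases but stays bounded) under every chip-pushing move, so that the process must terminate. The natural first observation is that the multiset of occupied rooms, viewed as a finite subset $S \subset \ZZ$ of size $N$, changes in a controlled way: when we fire an adjacent pair, the span can only grow or stay the same, since the leftmost violinist never moves right and the rightmost never moves left. So span alone is not a monovariant, but it motivates looking at a convexity-type statistic. The candidate I would try is the sum of squares of the positions, $\Phi(S) = \sum_{i \in S} i^2$, or equivalently the sum of pairwise squared distances; intuitively, spreading two chips apart should increase this sort of ``moment of inertia.''

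The key steps, in order, would be: (1) Reduce to chip-pushing, as the excerpt already does, so that each move sends a left-moving ``wave'' of violinists one step left and a right-moving wave one step right, preserving left-to-right order. (2) Compute the change in $\Phi$ under one move. If the left wave occupies rooms $a, a-1, \dots, a-k+1$ (all occupied) and then shifts to $a-1, a-2, \dots, a-k$, the net effect on those coordinates is: the room $a$ empties and the room $a-k$ fills, i.e., $\Phi$ changes by $(a-k)^2 - a^2 = -2ak + k^2 < 0$ when... hmm, that is negative, not positive. Let me reconsider: actually the correct bookkeeping is that a block of consecutive occupied rooms slides by one, so the contribution change is $(a-k)^2 - a^2$ for the left side and $(b+k')^2 - b^2$ for the right side; for the process to have a monovariant I would instead track $\Phi$ against a reference. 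The cleaner choice: track $\Phi(S) = \sum_{i \in S}(i - c)^2$ is awkward since the centroid $c$ is preserved only up to the indistinguishability; better to just show $\Phi(S) = \sum_{i\in S} i^2$ strictly increases, by checking that sliding a block of $k$ consecutive integers ending at $a$ one step further from the origin increases the sum of squares — which requires the block to be on the correct side, and here it is, because the left wave moves left (toward $-\infty$) and the right wave moves right. Concretely one shows the firing pair sits at positions $p, p+1$ with $p \geq c$ or $p+1 \le c$ impossible simultaneously; the careful statement is that $\Phi$ increases by exactly $2(\text{something positive})$ — I would verify that for a single fire the increment is $2(r - \ell) + (\text{positive correction})$ where $\ell < r$ are the rooms that get vacated/filled, hence strictly positive. (3) Show $\Phi$ is bounded above along the process: since the span is bounded (the leftmost violinist moves left by at most... — actually the span can grow, but only by the number of moves, so this needs care). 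The real bound comes from the fact that the total number of violinists $N$ is fixed and $\Phi$ cannot exceed, say, $N \cdot (\text{span})^2$, while simultaneously each move increases $\Phi$ by at least $1$; combined with a bound on span in terms of the number of moves this gives a contradiction unless the process halts. Alternatively, and more cleanly, I would bound $\Phi$ by a fixed constant depending only on the initial configuration by a separate argument (e.g. the configuration always stays inside a fixed finite window — which is false in general, so this route needs the monovariant to be bounded intrinsically).

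A cleaner route, which I would actually prefer, is: define $\Phi(S) = \sum_{i \in S} 2^{|i|}$ or some such — no; instead use the standard trick for dispersion processes: show that $\sum_{i \in S} i$ is invariant (the centroid is preserved under chip-pushing, as moving one chip left by $d$ and one chip right by $d'$ with the block structure... actually it is not exactly preserved, but $\sum i$ changes by $(\text{left shift}) $; let me just say the sum of positions is preserved because a left-wave of length $k$ shifting left decreases the position-sum by $k$ while... no). Given the subtlety, the honest plan is: use $\Phi(S) = \sum_{i\in S} i^2$, prove it strictly increases at every move by a direct computation on the two sliding blocks, and prove it is bounded above by observing that the convex hull $[\min S, \max S]$ grows by at most a bounded amount per move while $N$ is fixed, so $\Phi \le N(\max S - \min S)^2$ — then since $\max S - \min S$ increases by at most $2$ per move and $\Phi$ increases by at least $1$ per move, after $T$ moves $\Phi \ge \Phi_0 + T$ but also $\Phi \le N(\mathrm{span}_0 + 2T)^2$, which is not immediately a contradiction. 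So the main obstacle, and the step I expect to be hardest, is establishing an \emph{a priori} bound on how far the configuration can spread — equivalently, ruling out a configuration that marches off to infinity while each move makes ``local'' progress.

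To handle that obstacle, I would argue that the leftmost and rightmost violinists move only finitely often: the leftmost violinist moves left exactly when it is involved as the left member of a fired pair, and each such move requires room $\min S$ and $\min S + 1$ to both be occupied; but once the leftmost moves left, the room it vacated may or may not be re-occupied, so I would instead track the \emph{number of violinists to the left of the centroid} or use the second-moment monovariant in tandem with the invariance of the centroid-like quantity $\sum i$ (which I believe \emph{is} invariant under chip-pushing — a left block of length $k$ sliding left loses $k$ from the sum, and the matched right block of length $k'$ sliding right gains $k'$, and these are the same block in the sense that... no, they are different lengths). Ultimately the correct and simplest finish, which I would adopt, is: (i) $\sum_{i\in S} i$ is invariant — verify directly that firing the pair $\{p,p+1\}$ moves the left portion and the right portion so that the total displacement cancels (one violinist goes left by $a\ge 1$, one goes right by $b \ge 1$, but the intervening pushed violinists all shift by $1$ in offsetting directions, leaving the sum unchanged; I will check this); (ii) hence $S$ always lies in a band whose position is controlled; (iii) $\sum_{i\in S} i^2$ is then a bounded, strictly increasing, integer-valued monovariant, so the process terminates. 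I expect step (i) — pinning down exactly which violinists move and by how much in a single chip-pushing move, and confirming the first-moment invariance — to be the crux; everything after it is a short computation.
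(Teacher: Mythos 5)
Your overall strategy (find a bounded strictly monotone quantity) is viable, but both concrete quantities you propose fail, and you never close the gap you correctly identify as the crux. First, the first moment $\sum_{i\in S} i$ is \emph{not} invariant under a move: if the fired pair has left neighborhood $\ell$ and right neighborhood $r$ (in the paper's notation), the left-moving violinist travels $\ell$ rooms and the right-moving one travels $r$ rooms, so the sum changes by $r-\ell$, which is nonzero whenever the move is asymmetric. (Concretely, firing the pair $(2,3)$ in the state $\{1,2,3\}$ yields $\{0,1,4\}$, and the sum drops from $6$ to $5$.) This is exactly the content of Lemma~\ref{lem:2clusteronconstantcentroid} in the paper, and it kills step (i) of your ``clean finish,'' and with it your claim that the configuration stays in a controlled band. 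Second, $\Phi(S)=\sum_{i\in S} i^2$ is not monotone either: the change under a move at $(i,i+1)$ is $(i-\ell)^2-i^2+(i+1+r)^2-(i+1)^2$, whose sign depends on the absolute position $i$ (e.g.\ occupied rooms $91,\dots,101$, firing $(100,101)$, gives $90^2-100^2+102^2-101^2=-1697<0$). Since the process is translation-equivariant but $\Phi$ is not translation-invariant, no such second-moment can work unconditionally. So both halves of the argument --- the monovariant and the a priori bound on the spread --- are missing.

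For comparison: the paper avoids monovariants entirely and argues by induction on $N$ with a pigeonhole step. If the process for $N+1$ violinists ran forever, an extreme violinist would move more than $2Nb_N$ times, forcing the span past $2Nb_N+1$ and hence creating a gap of size at least $2b_N+1$; the two sides of that gap can each make at most $b_N$ further moves, so they can never become adjacent again, bounding the total number of moves. If you want to salvage the monovariant route, the working choice is the entropy $P(S,2)=\sum_{i\in S}2^i$ (Proposition~\ref{prop:increaseEntropy} shows it strictly increases), and the boundedness comes from Proposition~\ref{prop:occupiedRooms}: the set $S'$ of rooms that are occupied or adjacent to an occupied room only grows with time yet has cardinality at most $3N$, so it stabilizes, the configuration is confined to a fixed finite window, and the entropy is a bounded strictly increasing integer. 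That pairing is precisely the ingredient your proposal is reaching for but does not supply.
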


The proof is known \cite{DarijGrinberg2021}, but we want to present our solution utilizing the idea of chip-pushing.

\begin{proof} 
We prove that there exists an upper bound on the maximum possible number of times that the chip-pushing operation can be performed on a state with $N$ violinists by using induction. If we have $1$ violinist, then no operations can be performed. If we have $2$ violinists, then the final state is achieved after one move. Now we show that if there exists an upper bound $b_N$ for $N$ violinists, there also exists an upper bound for the number of moves made by $N+1$ violinists.

We think of our operation as chip-pushing. Thus, the order of the violinists is preserved. We denote the $i$-th violinist from the left to be violinist $v_i$. Consider the leftmost and rightmost violinists on the number line, $v_1$ and $v_{N+1}$, respectively. If our process does not terminate for $N+1$ violinists, then one of $v_1$ or $v_{N+1}$ must make an infinite number of moves, or else the process will end by our inductive assumption. So say without loss of generality that $v_{N+1}$ makes an infinite number of moves. After $v_{N+1}$ moves $2Nb_N+1$ times, the span of any state following these moves must be greater than $2N b_N+1$. Thus, by the pigeonhole principle, there exists a gap of size at least $\left\lceil\frac{2N b_N+1}{N}\right\rceil=2b_N+1$, between some two violinists $v_i$ and $v_{i+1}$, for some $1\le i \le N$. If we consider the sets of violinists $S_1=\{v_1,v_2,\dots,v_i\}$ and $S_2=\{v_{i+1},v_{i+2},\dots,v_{N+1}\}$, then within each set, at most $b_i$ and $b_{N+1-i}$ moves can be performed, respectively, both of which are less than or equal to $b_N$. As a result, because the gap between $v_i$ and $v_{i+1}$ has size at least $2 b_N+1>b_{N+1-i}+b_{i}$, they will never be adjacent. This means that no more moves can be performed with one violinist from $S_1$ and one violinist from $S_2$. But the number of moves that can be performed within each set is bounded by constants $b_i$ and $b_{N+1-i}$. Thus, the total number of operations for $N+1$ violinists is bounded as well.
\end{proof}

\section{Preliminaries} \label{sec:preliminaries}

In this section we define a few terms that will be used throughout the paper. We allow several violinists in a single room.

We are interested in studying the initial states where violinists occupy some number of consecutive rooms. We also allow having several violinists per room. We call such initial states \textit{clusterons}. The original problem corresponds to a specific case when each room is initially occupied by a single violinist. We call such states \textit{flat clusterons}. Example~\ref{ex:fClusteron4} shows all possible moves starting from a flat clusteron with $N=4$ violinists.

We define the \textit{size} of a clusteron as the total number of violinists occupying some room in it. The size of a clusteron will be denoted by $N$, where $N>0$.

Suppose $a_i$ is the number of violinists in room $i$ in state $S$. Then we build the Laurent polynomial
\[P(S,t) = \sum a_i t^i.\]
We can express the total number of violinists $N$ as $P(S,1)$.

The \textit{entropy} of state $S$ is defined as $P(S,2)$; see \cite{DarijGrinberg2021}.

We define the \textit{centroid}, $C(S)$ as
\[C(S) = \frac{1}{N} \sum a_i i.\]

Let $K=NC$ denote the \textit{sumtroid} of a state, where $C$ is the centroid of the state. In particular, $K$ also equals the sum of the room numbers of all occupied rooms. 

Equivalently, 
\[C(S) = \frac{1}{N} P'(S,1) \quad \textrm{ and } \quad K = P'(S,1).\]

Suppose we can make a move on the rooms $(i,i+1)$, such that $\ell-1$ rooms to the left of room $i$ are occupied, and $r-1$ rooms to the right of room $i+1$ are occupied. We call $\ell$ and $r$ the \textit{left} and \textit{right neighborhoods} of the move, respectively.

The following proposition in case of flat clusterons was proven in \cite{DarijGrinberg2021}. We repeat the proof for a more general case of any state that allows a move.

\begin{proposition}\label{prop:increaseEntropy}
Given any state that has a move, the entropy of a state increases after a move.
\end{proposition}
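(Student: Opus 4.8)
The plan is to compute the change in entropy $P(S,2)$ directly in terms of the left and right neighborhoods $\ell$ and $r$ of the move. Suppose the move acts on rooms $(i,i+1)$. Because we use the chip-pushing description, the violinist leaving room $i$ lands in the first empty room to the left; since the $\ell-1$ rooms immediately left of room $i$ are occupied, that empty room is room $i-\ell$. Chip-pushing then shifts the violinists in rooms $i-1,\dots,i-\ell+1$ each one room left, so the net effect on the left side is: the contents of rooms $i-\ell+1,\dots,i$ each move one step left. Symmetrically, on the right, the contents of rooms $i+1,\dots,i+r$ each move one step right. I will write $\Delta(\text{entropy})$ as a telescoping sum of the per-violinist changes $2^{j-1}-2^j = -2^{j-1}$ on the left block and $2^{j+1}-2^j = 2^j$ on the right block, and show the total is strictly positive.

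The key steps, in order: (1) fix notation for the move, with $a_{i-\ell+1},\dots,a_i$ the occupancies of the left block and $a_{i+1},\dots,a_{i+r}$ those of the right block (each of these is at least $1$, since all these rooms are occupied, except we should be slightly careful at the extreme ends—but rooms $i$ and $i+1$ certainly have occupancy $\ge 1$, and that will be enough). (2) Write the entropy change as
\[
\Delta = \sum_{j=1}^{\ell} a_{i-j+1}\bigl(2^{i-j} - 2^{i-j+1}\bigr) + \sum_{j=1}^{r} a_{i+j}\bigl(2^{i+j+1} - 2^{i+j}\bigr) = -\sum_{j=1}^{\ell} a_{i-j+1} 2^{i-j} + \sum_{j=1}^{r} a_{i+j} 2^{i+j}.
\]
(3) Bound the negative (left) contribution from above: since each $a_{i-j+1}\le N$ and the powers of two form a geometric series, $\sum_{j=1}^{\ell} a_{i-j+1} 2^{i-j} \le N \sum_{j\ge 1} 2^{i-j} = N 2^{i-1}$... but this crude bound is not obviously beaten by the right side, so instead I will use the sharper structural fact that on the left the relevant violinists occupy rooms with indices $\le i$, hence their total entropy contribution is strictly less than what a single violinist contributes from room $i+1$ on the right, because the move sends at least one violinist from room $i+1$ (occupancy $\ge 1$) one step right, gaining $2^{i+1}$, while the entire left block loses at most $2^i + 2^{i-1} + \cdots < 2^{i+1}$ when each room holds one violinist—and for multiple occupancy one compares the loss at each room to the gain, noting the rightmost violinist moved contributes a gain strictly exceeding the sum of all leftward losses from rooms with strictly smaller index when occupancies are equal, with the general case following by the same geometric domination after accounting for the (symmetric) rightward gains from rooms $i+1,\dots,i+r$.

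The main obstacle is step (3): making the inequality genuinely tight in the multiple-occupancy setting, where a room to the left could in principle hold many violinists. The clean way around this is to pair the move's two sides: the left side loses $\sum_j a_{i-j+1} 2^{i-j}$ and the right side gains $\sum_j a_{i+j} 2^{i+j}$; since $a_i \ge 1$ and $a_{i+1}\ge 1$, we have a guaranteed gain of at least $2^{i+1}$ from the $j=1$ term on the right, while the loss from the $j=1$ term on the left is at most $a_i 2^{i-1}$, and $a_i 2^{i-1}$ need not be small—so the correct argument is to re-index so that each leftward-pushed violinist in room $i-j+1$ is compared against the rightward-pushed violinist of the same "rank from the center," using that the order of violinists is preserved: violinist $v_k$ moving left from room $m$ to room $m-1$ loses $2^{m-1}$, and there is a strictly larger index room on the right into which an equal-or-greater number of violinists get pushed. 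Concretely, I expect the final inequality to reduce to $\sum_{j\ge 1} 2^{-j} < 2$, i.e. a one-line geometric estimate, once the telescoping and the preservation of left-to-right order are set up correctly. I would present the flat-clusteron case first (where every $a_i\in\{0,1\}$ and the estimate is transparent) and then remark that the geometric domination is unaffected by lumping several violinists into one room, since multiplying a term by $a_i \le N$ is still dominated by the strictly-greater power of two on the right after summing the finite geometric tail.
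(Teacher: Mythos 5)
Your accounting of the move is where this goes wrong. In the multiple-occupancy setting a move removes one violinist from room $i$ and one from room $i+1$ and places one in room $i-\ell$ and one in room $i+1+r$; every other room's occupancy is unchanged. (Chip-pushing only relabels which violinist is which: exactly one violinist from each intermediate room is displaced by one, so the occupancy counts of rooms $i-\ell+1,\dots,i-1$ and $i+2,\dots,i+r$ are identical before and after the move.) Your formula
\[
\Delta = \sum_{j=1}^{\ell} a_{i-j+1}\bigl(2^{i-j} - 2^{i-j+1}\bigr) + \sum_{j=1}^{r} a_{i+j}\bigl(2^{i+j+1} - 2^{i+j}\bigr)
\]
instead charges a full one-room displacement to \emph{all} $a_{i-j+1}$ occupants of each intermediate room, which is not the actual change in $P(S,2)$. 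With, say, $a_i$ large, $r=1$ and $a_{i+1}=1$, your $\Delta$ would be negative even though the true entropy change is positive, so the decomposition cannot be rescued by a sharper bound---it computes the wrong quantity. The correct tally is immediate from the polynomial: the entropy changes by $\bigl(2^{i-\ell}+2^{i+1+r}\bigr)-\bigl(2^i+2^{i+1}\bigr)$, and since $r\ge 1$ gives $2^{i+1+r}\ge 2^{i+2}>2^i+2^{i+1}$, positivity follows in one line; this is the paper's proof. (If you insist on telescoping the chip-pushing shifts, each intermediate room contributes coefficient $1$, not $a_{i-j+1}$, and the telescope collapses to exactly the same four terms.)

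Independently of the formula, your step (3) is not a completed argument: you propose a bound, observe it does not suffice, appeal to a ``sharper structural fact'' and a re-indexing ``by rank from the center,'' and explicitly leave the multiple-occupancy case as an obstacle to be handled later. None of that machinery is needed once the move is tallied correctly, because the intermediate rooms contribute nothing to the change in entropy.
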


\begin{proof}
Given a move with $\ell$ and $r$ being left and right neighborhoods, the state polynomial changes by $\left(t^{i-\ell}+t^{i+1+r}\right)-\left(t^i+t^{i+1}\right)$.
The entropy of the state changes by $\left(2^{i-\ell}+2^{i+1+r}\right)-\left(2^i+2^{i+1}\right)$. Because $2^{i+1+r}>2^i+2^{i+1}$ for all values of $i$, this change is always positive.
\end{proof}

In particular, given a move with $\ell$ and $r$ being left and right neighborhoods, the centroid changes by $\frac{r-\ell}{N}$, and the sumtroid changes by $r-\ell$. This implies the following lemma.

\begin{lemma}\label{lem:2clusteronconstantcentroid}
The centroid (sumtroid) remains constant after a particular move if and only if the right and left neighborhoods of the move are the same.
\end{lemma}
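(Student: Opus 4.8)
The statement to prove is Lemma~\ref{lem:2clusteronconstantcentroid}: the centroid (equivalently, the sumtroid) remains constant after a move if and only if the left and right neighborhoods $\ell$ and $r$ of that move are equal.

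The plan is to read off the result directly from the computation already recorded just before the lemma. Indeed, the sentence preceding the lemma states that after a move with left and right neighborhoods $\ell$ and $r$, the centroid changes by $\frac{r-\ell}{N}$ and the sumtroid changes by $r-\ell$; this is an immediate consequence of Proposition~\ref{prop:increaseEntropy}, since the state polynomial changes by $\bigl(t^{i-\ell}+t^{i+1+r}\bigr)-\bigl(t^i+t^{i+1}\bigr)$, and the sumtroid is $P'(S,1)$, so its change is obtained by differentiating this difference and evaluating at $t=1$: $\bigl((i-\ell)+(i+1+r)\bigr)-\bigl(i+(i+1)\bigr)=r-\ell$. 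The centroid change is then this divided by $N$.

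Given this, the proof is a one-line logical equivalence: $r-\ell=0$ if and only if $\ell=r$, so the sumtroid change $r-\ell$ vanishes exactly when the neighborhoods coincide, and likewise for the centroid change $\frac{r-\ell}{N}$ since $N>0$. I would simply restate the formula for the change (citing Proposition~\ref{prop:increaseEntropy}) and then observe the equivalence.

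There is essentially no obstacle here; the only thing to be careful about is making sure the differentiation of the Laurent monomials is presented cleanly (the derivative of $t^k$ evaluated at $1$ is $k$), and that the sign convention matches the statement — a move pushes the left chip from room $i$ to room $i-\ell$ and the right chip from room $i+1$ to room $i+1+r$, so the net displacement of occupied-room-number-sum is indeed $r-\ell$. Since Proposition~\ref{prop:increaseEntropy} and the remark after it already do all the bookkeeping, the lemma follows formally.
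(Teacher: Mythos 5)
Your proposal is correct and follows exactly the route the paper takes: the paper derives the change $r-\ell$ in the sumtroid (and $\frac{r-\ell}{N}$ in the centroid) from the state-polynomial difference computed in the proof of Proposition~\ref{prop:increaseEntropy}, and then notes the lemma is an immediate consequence. Your differentiation-at-$t=1$ bookkeeping matches the paper's $K = P'(S,1)$ convention, so there is nothing to add.
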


\section{Initial Results and Bijection to Consecutive Groups of Rooms}\label{sec:bijection}

Our definition of the centroid is dependent on labeling the rooms. If we change the label of every room from $a$ to $a+x$, where $x$, the centroid of the state will increase by $x$ too. We allow $x$ to be an integer or half-integer. If $x$ is a half-integer, the new labeling for rooms becomes by half-integers too. By default, we fix the labeling so that the centroid (and sumtroid) of any starting state is $0$. If $N$ is odd, the labels are integers. If $N$ is even, the labels have a fractional part equal to $\frac{1}{2}$.

We continue with Example~\ref{ex:fClusteron4} once again.
\begin{example}
Consider the starting state with 4 violinists in rooms $-\frac{3}{2}$, $-\frac{1}{2}$, $\frac{1}{2}$, and $\frac{3}{2}$. For the first move, we have three choices L, M, and R (for left, middle, and right). For the second move, we have two choices L and R. Table~\ref{table:fClusteron4} shows the achievable final states, the moves that lead to them, and their corresponding sumtroids. Notice that one of the final states can be achieved by two different sequences of moves.

\begin{table}[ht]\label{table:fClusteron4}
    \centering
    \begin{tabular}{c|c|c}
        Final State & Moves & Sumtroid \\
        \hline
        0010101001 & LL & $3$ \\
        0100101010 & LR & $1$ \\
        0101001010 & ML, MR & $0$\\
        0101010010 & RL & $-1$\\
        1001010100 & RR & $-3$
    \end{tabular}
    \caption{Final states from flat clusteron of size $4$ with sumtroid values.}
    \label{tab:4fs}
\end{table}
\end{example}

In our new setup, when we allow several violinists in a room, it is not clear that we always have an available move. If we start with one room with several violinists, the moves are not defined. We call a room an \textit{isolated room} if both of its neighboring rooms are empty. We also call a room a \textit{crowded room} if it has more than one violinist in it. 

Theoretically, it might be possible to start with a clusteron and, after some number of moves, reach an isolated room with more than one violinist. We do not have moves for this case. However, Proposition~\ref{prop:multipleoccupancy} shows that if we start with a clusteron with more than one room, we do not get into such a predicament.

\begin{proposition} \label{prop:multipleoccupancy}
Consider a state that originates from a clusteron with size greater than $1$. From this state, it is impossible to reach a state in which there exists a crowded isolated room.
\end{proposition}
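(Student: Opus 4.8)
The plan is to argue by contradiction: suppose some reachable state $S$ contains a crowded isolated room, say room $i$ with $a_i \ge 2$ violinists and rooms $i-1, i+1$ empty. I would look at the move $M$ that produced $S$ from its predecessor state $S'$. Since the only way to increase the occupancy of a room or to empty its neighbors is through chip-pushing, I would trace exactly which configuration in $S'$ could be transformed by $M$ into this crowded isolated room. The key observation is that our moves never \emph{add} violinists to a room from both sides simultaneously, and chip-pushing from one direction lands exactly one violinist into the first empty room it reaches; so to end up with $a_i \ge 2$ in room $i$, room $i$ must already have had at least one violinist in $S'$, or the pushing chain must have deposited a violinist into an already-occupied room, which never happens. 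Unwinding this, I would conclude that in $S'$ room $i$ already contained a crowded configuration or room $i$ was part of a larger occupied block, and in either case the neighbors of room $i$ in $S'$ cannot have been simultaneously empty in the way needed.

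The cleaner route, which I would actually carry out, is to find a monovariant or structural invariant that is preserved by every move and that is violated by a crowded isolated room. Specifically, I would track the multiset of \emph{maximal runs of consecutive occupied rooms} (the ``clusterons'' present in the state) together with their sizes. Starting from a single clusteron of size $N > 1$, I claim that in every reachable state, every crowded room lies inside a clusteron of size at least $2$ (i.e., has at least one occupied neighbor). One proves this is preserved under a move: a move on adjacent rooms $(j, j+1)$ pushes one violinist left to the nearest empty room and one right to the nearest empty room. The left-pushed violinist lands in an empty room adjacent to an occupied room (the last room of the left-pushing chain), hence is never isolated; similarly on the right. The rooms strictly between the landing spots and $j, j+1$ that lose violinists still have occupied neighbors since the chain was contiguous. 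And rooms $j$ and $j+1$ themselves: after the move each loses exactly one violinist, but if either was crowded before, it is still occupied and adjacent to the other (if the other retains a violinist) or adjacent to the chain; the only delicate case is when $a_j = a_{j+1} = 1$, in which case rooms $j, j+1$ become empty and cannot host a crowded isolated room.

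Carrying this out requires a careful case analysis of where the two pushed violinists land relative to the occupied block they came from, but no hard computation. The main steps are: (1) set up the invariant ``every crowded room has at least one occupied neighbor''; (2) verify it holds for the initial clusteron (trivial, since the clusteron has size $> 1$, so every room has an occupied neighbor); (3) verify that a single chip-pushing move preserves it, splitting into the subcases according to whether the left/right pushing chains have length one or more and whether $a_j$ or $a_{j+1}$ equals $1$ or exceeds $1$; (4) conclude that no reachable state has a crowded \emph{isolated} room, since isolated means \emph{both} neighbors empty, contradicting the invariant.

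The step I expect to be the main obstacle is the case verification in (3), and in particular the subtle situation where the move itself empties one of the two fired rooms while the other remains crowded: one must check that the now-crowded-but-possibly-isolated room still has an occupied neighbor, using that the pushed violinist from that side landed in the adjacent room or that the opposite fired room is still occupied. Handling the interaction between ``the room loses a violinist'' and ``its neighbor's occupancy status may also have changed in the same move'' is where the bookkeeping is easiest to get wrong, so I would state explicitly, before the move, which rooms are occupied in a neighborhood of $j$ and $j+1$, then describe the post-move occupancies room by room.
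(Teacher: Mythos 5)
Your plan is sound, and the invariant route you say you would actually carry out is a genuinely different organization from the paper's argument, even though both rest on the same local facts about chip-pushing. The paper argues by minimal counterexample: it takes the shortest move sequence producing a crowded isolated room $r_i$, observes that the last move must be performed on one of the pairs $(r_i-2,r_i-1)$, $(r_i-1,r_i)$, $(r_i,r_i+1)$, $(r_i+1,r_i+2)$, and checks in each case that a neighbor of $r_i$ is still occupied afterwards. Your version replaces this with the global invariant ``every crowded room has an occupied neighbor,'' verified move by move; this is arguably cleaner, because it sidesteps the (slightly delicate, and only implicitly justified in the paper) claim that a faraway move cannot adversely affect the neighborhood of $r_i$ --- in your framing one only ever needs the observation that after a move on $(j,j+1)$ the rooms $j-1$ and $j+2$ are always occupied (each is either the landing room of a pushed violinist or an interior room of a pushing chain, which keeps its count), and that room counts never increase except from $0$ to $1$, so no new crowded rooms appear away from $j$ and $j+1$. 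Your identification of the delicate subcase (one fired room emptied while the other stays crowded) is exactly right and is resolved by the same observation. One small slip: in your step (2) you justify the base case by saying size $>1$ implies every room has an occupied neighbor, but a clusteron of size $>1$ may occupy a \emph{single} room (e.g., two violinists in one room), in which case the initial state is itself a crowded isolated room and the invariant fails at time zero; the paper's surrounding text quietly restricts to clusterons occupying more than one room, and you should state that hypothesis explicitly rather than derive it from the size.
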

\begin{proof}
Assume for the sake of contradiction that there exists some sequence of moves for a clusteron that creates an isolated room $r_i$, and let $a_1$, $a_2$, $\dots$, $a_j$ be the shortest such sequence. Move $a_j$ must include one of rooms $r_i-1$ and $r_i+1$, or $a_1$, $a_2$, $\dots$, $a_{j-1}$ would be a shorter sequence.
    
Thus, $a_j$ must move violinists in one of the following pairs of rooms: $(r_i-2,r_i-1)$, $(r_i-1,r_i)$, $(r_i,r_i+1)$, or $(r_i+1,r_i+2)$. But moving violinists in rooms $(r_i-2,r_i-1)$ moves the violinist in room $r_i-1$ to the closest unoccupied room to his right. However, room $r_i$ is already occupied, meaning that the violinist from room $r_i-1$ will either move into room $r_i+1$, or move past room $r_i+1$. In either case, room $r_i+1$ will be occupied after $a_j$, creating a contradiction. Similarly, the move $(r_i-1,r_i)$ will move one of the violinists in room $r_i$ to the closest unoccupied room to the right of room $r_i$, guaranteeing that $r_i+1$ is occupied after $a_j$. After this move, both rooms $r_i$ and $r_i+1$ would still be occupied, and we would be able to perform another move, contradiction. By symmetry, moving one of the pairs $(r_i,r_i+1)$ or $(r_i+1,r_i+2)$ would result in room $r_i-1$ being occupied after $a_j$, contradiction.
\end{proof}

\begin{theorem}
All final states of a clusteron of size greater than $1$ have single occupancy rooms.
\end{theorem}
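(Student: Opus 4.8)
The plan is to argue by contradiction: suppose some sequence of moves starting from a clusteron of size $>1$ reaches a final state $S$ (one with no available move) that contains a crowded room. First I would observe that in a final state no two occupied rooms are adjacent — otherwise a move would be available — so every occupied room in $S$ is an isolated room in the sense of the paper (both neighbors empty). Combining this with Proposition~\ref{prop:multipleoccupancy}, which forbids reaching a state with a crowded isolated room, we immediately get a contradiction: a crowded room in $S$ would be a crowded isolated room, which Proposition~\ref{prop:multipleoccupancy} rules out.

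So the real content is the first observation, that a final state cannot have two occupied adjacent rooms. This is essentially the definition of "final" — the process stops precisely when there are no two violinists in adjacent rooms — so I would just make explicit that "final state" means "no move is available," and a move is available exactly when some two adjacent rooms are both occupied (recall a move acts on violinists in adjacent rooms, and if a room is crowded it in particular has a neighbor situation to check). One subtlety worth addressing: a move requires two violinists in \emph{adjacent} rooms, not two violinists in the \emph{same} room, so a priori a crowded room by itself (with empty neighbors) would not enable a move. That is exactly why we need Proposition~\ref{prop:multipleoccupancy} rather than a purely local argument — to rule out the one remaining bad configuration, an isolated crowded room.

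Putting it together: let $S$ be a final state reachable from a clusteron of size $>1$, and suppose for contradiction that room $r$ in $S$ holds more than one violinist, i.e. $r$ is crowded. Since $S$ is final, rooms $r-1$ and $r+1$ are both unoccupied (if either were occupied, that occupied room together with $r$ would be two occupied adjacent rooms and a move could be performed). Hence $r$ is a crowded isolated room, contradicting Proposition~\ref{prop:multipleoccupancy}. Therefore no room of $S$ is crowded, i.e. $S$ has single occupancy.

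I expect essentially no obstacle here; the theorem is a short corollary of Proposition~\ref{prop:multipleoccupancy} together with the trivial characterization of final states. The only care needed is the bookkeeping point above — making sure the argument handles the case of an isolated crowded room, which is precisely what Proposition~\ref{prop:multipleoccupancy} was set up to exclude.
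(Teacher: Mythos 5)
Your proof is correct and is essentially the paper's own argument: the paper likewise derives the theorem as an immediate consequence of Proposition~\ref{prop:multipleoccupancy}, observing that a crowded room whose neighbors were both empty would be a forbidden crowded isolated room, so any crowded room must have an occupied neighbor and the state cannot be final. You have merely arranged the same reasoning in contrapositive form, with a bit more explicit bookkeeping.
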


\begin{proof}
By Proposition~\ref{prop:multipleoccupancy}, if we are ever in a state in which there exists a room with multiple occupants, then one of its neighboring rooms must be occupied, and thus we cannot be in a final state. 
\end{proof}

We define a \textit{flat near-clusteron} to be a finite consecutive set of single occupancy rooms with exactly one unoccupied room between the leftmost and rightmost occupied rooms in the set. The size of a flat near-clusteron is defined as the number of occupied rooms in it.

We define a \textit{propagated state} of a clusteron to be any state that has originated from a clusteron.

Before concluding this section, we give an alternate representation of the problem in terms of flat clusterons, which allows us to describe states in shorter form.

Define a \textit{suite} to be the inclusion-maximal set of occupied rooms. To each suite, we assign a number equal to the number of violinists in the suite. In a suite-state, a run of $i$ consecutive zeroes corresponds to $i+1$ empty rooms in a room-state. Any nonzero number $x$ in a suite-state corresponds to a run of $x$ consecutively occupied rooms in a room-state.

Consider a bijection $B$ between single occupancy room-states and suite-states, where we replace a run of ones with the number of ones in a run, and remove one of the zeros between two consecutive runs of ones. For example, $B(1011001) = 1201$.

We define a move on a state of suites as follows. Whenever we have a suite with more than $k>1$ violinists, send $0<x<k$ occupants to the suite directly to the left and $k-x$ occupants to the suite to the right, leaving $0$ violinists in the original suite.

We define the formula for the centroid of a state of suites to be the same as the formula to obtain the value of the centroid of a state of rooms.

We now show that, given a room-state $S_1$ and a move from $S_1$ to $S_2$, there exists a move on $B(S_1)$ such that the resulting state is $B(S_2)$. We also show that if we have states of rooms $S_1$ and $S_2$ and states of suites $S'_1$ and $S'_2$ such that $B(S_1)=S'_1$ and $B(S_2)=S'_2$, and there exists a move that can be performed to $S_1$ to turn it into state $S_2$, then the corresponding move turning $S'_1$ into $S'_2$ will have the same impact on the value of centroid of $S'_1$ as the original move does on the value of the centroid of $S_1$.

\begin{theorem} \label{thm:suites}
The bijection $B$ preserves moves and centroids.
\end{theorem}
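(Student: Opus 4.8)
The plan is to establish the two claims separately, and for each, to split the verification according to which of the ``boundary'' situations a move falls into. The key observation is that $B$ translates a move on rooms into a move on suites by tracking how runs of ones merge or split. A room-move acts on an adjacent pair $(i,i+1)$ of occupied rooms; since both rooms are occupied, rooms $i$ and $i+1$ lie in the same run of ones, i.e. in the same suite $s$ of size $k = a_s \ge 2$ (here $a_s$ is the value assigned to that suite). The violinist leaving room $i$ to the left travels to the nearest empty room, which is exactly the position just left of the suite $s_{\text{left}}$ immediately preceding $s$ (or creates a new singleton suite if there is a $\ge 2$-gap to the left); symmetrically on the right. So in suite language, the move removes suite $s$ and adds $x$ occupants to the suite on its left and $k-x$ to the suite on its right, for some $1 \le x \le k-1$ — precisely the definition of a suite-move. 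I would make this correspondence explicit by casework: (a) the left neighbor of $s$ is a suite at distance-$1$ gap, so the $x$ departing violinists merge into it; (b) the gap to the left is $\ge 2$, so a new suite is born; and symmetrically for the right. In every case one checks directly that applying $B$ to the new room-state yields the claimed new suite-state. The value $x$ is determined by $\ell$, the left neighborhood of the move (the number of consecutive occupied rooms including and to the left of room $i$ within suite $s$): the violinist from room $i$ lands just past those, so $x$ equals the portion of $s$'s count that ends up on the left. This step — bookkeeping the indices through the definition of $B$ — is routine but is where all the content lives; it is the main obstacle in the sense that one must be careful about the ``remove one zero between runs'' convention when runs merge.

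For the centroid claim, I would use Lemma~\ref{lem:2clusteronconstantcentroid} and the formula preceding it: a room-move with left and right neighborhoods $\ell, r$ changes the sumtroid by $r - \ell$. It therefore suffices to show the corresponding suite-move changes the suite-state sumtroid by the same amount. Using the suite-centroid formula $C = \frac1N \sum a_i i$ with the suite indexing, the suite-move takes $k$ units of mass out of position $p$ (the index of suite $s$) and redistributes $x$ units to position $p - d_{\text{left}}$ and $k-x$ units to position $p + d_{\text{right}}$, where $d_{\text{left}}, d_{\text{right}}$ are the index-distances to the neighboring suites. The resulting sumtroid change is $-x\, d_{\text{left}} + (k-x)\, d_{\text{right}}$. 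The remaining task is to match this against $r - \ell$ from the room picture. Since $B$ deletes exactly one zero between consecutive runs of ones, a gap of size $g$ between two suites in the room-state corresponds to an index-difference of... one needs to chase the exact relation between the room-gap, the run lengths, and the suite-index spacing; once that dictionary is pinned down, $-x d_{\text{left}} + (k-x) d_{\text{right}} = r - \ell$ should fall out algebraically.

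Finally I would note that injectivity and surjectivity of $B$ (that it is a genuine bijection between single-occupancy room-states up to the zero-deletion convention and suite-states) is essentially immediate from the construction — the inverse replaces each count $x$ by a run of $x$ ones and reinserts a zero between consecutive runs — so the only real work is the move-preservation and centroid-preservation, handled by the casework above. I expect the trickiest point to be getting the off-by-one conventions consistent between the two centroid computations, since the suite indices are not the room indices and the bijection shifts everything by collapsing gaps; I would handle this by carefully fixing, once and for all, how a suite is assigned an integer index (e.g. the room index of its leftmost occupied room, or its position in the left-to-right order of suites) and then doing both sumtroid computations in that single convention.
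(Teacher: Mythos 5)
Your proposal matches the paper's proof in both structure and substance: the same casework on whether the departing violinists merge with an adjacent run (gap $1$) or spawn a new run (gap $\geq 2$) establishes move-preservation, and comparing the sumtroid changes of corresponding moves establishes centroid-preservation. The step you leave as ``should fall out algebraically'' is immediate once you observe that the suite-move by definition sends occupants to the \emph{adjacent} positions, so $d_{\text{left}}=d_{\text{right}}=1$ and the suite-side change is $(k-x)-x = r-\ell$; the paper compresses this into the observation that $B$ swaps the roles of ``distance moved'' and ``number of violinists moving that distance,'' preserving their product.
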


\begin{proof}
First, we show that $B$ preserves moves. Consider a room-state $S$. Suppose our move is from the $k$-th run of consecutive rooms, which has $x+y$ consecutive rooms, and splits the run into two separate runs of $x$ and $y$ consecutive rooms, creating a state $S'$. Performing the corresponding move on $B(S)$, which is splitting the $k$-th nonempty suite of $B(S)$ by moving $x$ of the violinists there into the room on the left and $y$ of them into the room on the right, creates the suite-state $B(S')$, as when moving from $S$ to $S'$, the $x$ violinists who move to the left will join the run of consecutive rooms to their left if they were previously separated by a single empty room, and be their own run of consecutive rooms if not. Similar arguments can be made for the $y$ violinists who move to the right.

The operation turning a move on a room-state into a move on a suite-state is reversible, which proves the reverse direction.

Now we show that $B$ preserves centroids. All $B$ does is switch the number of people in each room and the distance each violinist travels. Because the change in the centroid for each move is calculated by the quantity
$$\frac{\text{distance moved}\cdot\text{number of violinists that move that distance}}{\text{total number of violinists}},$$
and $B$ preserves the value of both the numerator and denominator, we conclude.
\end{proof}

For an example of how moves are preserved, the moves for the new action with suites in Figure ~\ref{fig:fClusteron4New} correspond to the old action in a flat clusteron of size $4$ in Figure~\ref{fig:4exampletree}.

\begin{figure}[ht!]
    \centering
        \scalebox{0.6}{
            \begin{forest}
             [$4$[$103$[$1102$[$11101$]][$1201$[$2011$[$10111$]]]][$202$[$1012$[$10201$[$11011$]]][$2101$[$10201$[$11011$]]]][$301$[$1021$[$1102$[$11101$]]][$2011$[$10111$]]]]
            \end{forest}
        }
        \caption{New action, with suites.}
    \label{fig:fClusteron4New}
\end{figure}
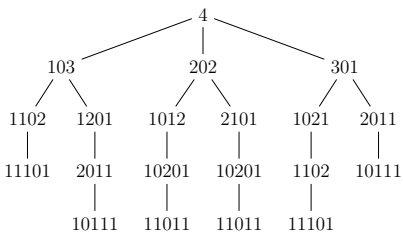

\section{Final States of Clusterons}\label{sec:fsclusterons}

For any state $S$, let us denote the set of all occupied rooms together with all neighboring rooms as $S'$. We have that $|S'| \leq 3 |S|$. We also define $S_i$ to be the state after the $i$-th move has been made.

\begin{proposition}[Grinberg \cite{DarijGrinberg2021}]\label{prop:occupiedRooms}
If a room is occupied or neighbors an occupied room, then the same is true for future states: $S'_i \subset S'_{i+1}$. 
\end{proposition}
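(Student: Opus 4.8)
The plan is to show that a single move cannot ``uncover'' a room that was previously occupied or adjacent to an occupied room; then the general statement $S'_i \subset S'_{i+1}$ follows immediately by applying this to the $(i+1)$-st move. So fix a state $S$ with a legal move, producing $S_{\mathrm{new}}$, and let $r$ be any room in $S'$; I must show $r \in S'_{\mathrm{new}}$. There are two cases: either $r$ is occupied in $S$, or $r$ is empty in $S$ but has an occupied neighbor.

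First I would handle the occupied case. Suppose room $r$ has a violinist in $S$. Describe the move as chip-pushing (as set up in Section~\ref{sec:solution}): we pick adjacent occupied rooms $(i,i+1)$, push the left block one step left into the first empty room, and push the right block one step right into the first empty room. The only rooms that can become empty as a result are $i$ and $i+1$ themselves (every other occupied room either stays put or gets a violinist pushed onto it). If $r \notin \{i, i+1\}$, then $r$ stays occupied and we are done. If $r = i$, then after the move room $i-1$ becomes occupied (the leftmost pushed violinist lands there, since $(i,i+1)$ was a legal move means $i$ is occupied, and pushing moves a violinist into room $i-1$), so $r = i$ neighbors an occupied room in $S_{\mathrm{new}}$, hence $r \in S'_{\mathrm{new}}$. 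Symmetrically if $r = i+1$, room $i+2$ becomes occupied. This uses the same local analysis already carried out in the proof of Proposition~\ref{prop:multipleoccupancy}.

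Next the empty-with-occupied-neighbor case: $r$ is empty in $S$ but some neighbor, say $r-1$ or $r+1$, is occupied in $S$. By the previous paragraph that occupied neighbor is either still occupied in $S_{\mathrm{new}}$ — in which case $r$ still has an occupied neighbor and $r \in S'_{\mathrm{new}}$ — or it was one of $i, i+1$ and hence its own outward neighbor became occupied. The only subtlety is to check that when the occupied neighbor of $r$ disappears, $r$ nonetheless acquires (or retains) an occupied neighbor. This is straightforward: if $r - 1$ was occupied and equals $i$, then the pushing sends a violinist into room $i - 1 = r - 2$, but we actually want a neighbor of $r$; here note that $i+1$ is occupied in $S$ and $i + 1 = r$ is empty by assumption — contradiction, so this sub-case is vacuous. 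If $r-1$ was occupied and equals $i+1$, then room $i+2 = r$ would need to be... again one checks the configuration forces $r$ to stay adjacent to an occupied room, or the sub-case is impossible. The bookkeeping here — enumerating which of $\{i,i+1\}$ the vanishing neighbor can be, and confirming $r$ keeps a witness — is the main (though entirely routine) obstacle; everything reduces to the observation that the set of rooms emptied by one move is contained in $\{i, i+1\}$ and that $i-1, i+2$ become occupied.

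Finally I would note that iterating the one-move statement along the move sequence gives $S'_0 \subset S'_1 \subset S'_2 \subset \cdots$, which is the claim; and in particular $S'$ only grows, a fact used repeatedly in the sequel.
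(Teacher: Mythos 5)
The paper does not actually prove Proposition~\ref{prop:occupiedRooms}; it is stated as a cited result of Grinberg \cite{DarijGrinberg2021}, so there is no in-paper argument to compare against. Judged on its own, your proof is correct and rests on the right two observations: a move on $(i,i+1)$ can empty only rooms $i$ and $i+1$, and it always leaves rooms $i-1$ and $i+2$ occupied (in the chip-pushing picture the violinist from $i$ lands in $i-1$ and the one from $i+1$ lands in $i+2$, with any displaced occupants pushed further out, so no other room is vacated; this also survives multiple occupancy, which matters since the proposition is later applied to propagated states of general clusterons). The only weak spot is the second case, where you trail off with ``one checks the configuration forces\dots or the sub-case is impossible.'' You can close it in one line: if $r \in S'$ has its only occupied witness $j$ in $\{i,i+1\}$, then $r \in \{i-1,i,i+1,i+2\}$, and since $i-1$ and $i+2$ are occupied after the move, every room in $\{i-2,\dots,i+3\}$ lies in $S'_{\mathrm{new}}$; in particular $r$ does. (For instance, your sub-case $r-1=i+1$ is not vacuous --- there $r=i+2$ simply becomes occupied itself.) With that sentence the case analysis is complete and the iteration $S'_i \subset S'_{i+1}$ follows as you say.
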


Using this, we can prove a statement on the sizes of gaps in final states.

\begin{proposition}\label{prop:gapsz12}
All final states of a clusteron only have gaps of size 1 or 2.
\end{proposition}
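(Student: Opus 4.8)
The plan is to argue by contradiction: suppose some final state $F$ of a clusteron contains a gap of size at least $3$, say between occupied rooms $p$ and $q$ with $q - p \geq 4$. Since $F$ is final, no two violinists are adjacent, so every gap in $F$ is at least $1$; the claim is that none can be $3$ or larger. First I would set up the relevant invariant. By Proposition~\ref{prop:occupiedRooms}, the set $S'$ of occupied-or-neighboring rooms only grows, so $S' = \bigcup_i S'_i$ eventually stabilizes at $F'$. Because the clusteron is a block of consecutive rooms, its $S'$ is an interval, and $F'$ is therefore an interval as well: $F' = [a,b]$ for some integers $a \leq b$. This is the key structural fact — the "footprint" of the process is always a single interval, with no internal holes of size large enough to break $S'$ into pieces.

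Now I would translate the gap condition into a statement about $F'$. If $F$ had a gap of size $\geq 3$ between rooms $p$ and $q$, then the rooms strictly between $p+1$ and $q-1$ (there is at least one such room when $q - p \geq 4$) would be unoccupied and would not neighbor any occupied room, hence would not lie in $F'$. This contradicts $F'$ being the interval $[a,b]$ (which contains $p$ and $q$ and hence everything between them). So the only way to have a large gap is... and here is where I need to be careful: a $3$-gap between $p$ and $q = p+3$ leaves the single room $p+2$ unoccupied but it neighbors $q$, and room $p+1$ neighbors $p$, so both are in $F'$ — the interval is not broken. So the interval argument alone only rules out gaps of size $\geq 4$, not size $3$. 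The main obstacle is therefore ruling out the $3$-gap.

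To eliminate the $3$-gap I would look more closely at how $S'$ grows near such a gap. Consider the first move that creates a configuration eventually containing this $3$-gap between $p$ and $q=p+3$, i.e. track the rooms $p+1$ and $p+2$. In the final state they are both empty. But consider the last moment a violinist occupied room $p+1$ or $p+2$, or the move that vacated them: a chip-pushing move sends violinists to the \emph{nearest} unoccupied room. If at some stage room $p+1$ becomes unoccupied while $p$ stays occupied and $p+2,\dots$ have some pattern, I would examine the left/right neighborhoods of the move responsible. The intended mechanism (consistent with the later claim in Section~\ref{sec:fsclusterons} that final states have exactly one $2$-gap) is that once $S'=[a,b]$ is fixed and has size $b-a+1$, a final state on that interval with all gaps $\leq 2$ and $N$ violinists is forced; a $3$-gap would make $|F'| = b-a+1$ too small to fit $N$ non-adjacent violinists into $[a,b]$ given that $F' = [a,b]$ is exactly the occupied-or-neighboring set — i.e., a counting argument: $|F'| \leq 3N$ with equality patterns constrained, and a $3$-gap wastes a room that is covered by two different violinists' neighborhoods, forcing $|F'| < b - a + 1$, contradiction.

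Concretely, the cleanest route: show $F' = [a,b]$ is an interval (from Proposition~\ref{prop:occupiedRooms} plus connectivity of the clusteron's footprint, using that each move keeps $S'$ an interval — a move on $(i,i+1)$ sends chips to the nearest empty rooms, which lie within or immediately adjacent to the current interval, so the interval stays connected). Then, every room of $[a,b]$ is within distance $1$ of an occupied room. In a gap of size $g$ between consecutive occupied rooms, the interior rooms at distance $\geq 2$ from both endpoints number $g - 2$; for all of $[a,b]$ to be covered we need $g - 2 \leq 0$, i.e. $g \leq 2$. That is exactly the statement, and it handles $g = 3$ correctly (a $3$-gap has $g - 2 = 1$ uncovered interior room). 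The one genuine thing to verify carefully — and the step I expect to be the main obstacle — is the invariant that $S'$ remains a single interval under a move, since a chip-pushing move can displace a violinist arbitrarily far; I would check that the displaced violinist always lands on a room already in $S'$ or immediately adjacent to the current $S'$-interval, so no new "island" is created and no internal room of the interval is ever removed from $S'$ (this last part is immediate from Proposition~\ref{prop:occupiedRooms}).
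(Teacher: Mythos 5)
Your argument is correct, and it reaches the conclusion by a different supporting lemma than the paper. The paper also reduces everything to Proposition~\ref{prop:occupiedRooms}, but its extra ingredient is the observation that, under chip-pushing, each violinist moves one room at a time from a consecutive initial block, so \emph{every room strictly between the two endpoints of the alleged gap was occupied at some moment}; monotonicity of $S'$ then forces the middle room of a size-$\geq 3$ gap to be occupied or adjacent to an occupied room in the final state, a contradiction. Your extra ingredient is instead the invariant that $S'$ is always a single interval, which you verify by noting that a fired violinist lands in a room that already neighbors an occupied room (hence already lies in $S'$), so $S'$ only ever grows by one cell at each of its two ends; the covering count $g-2\le 0$ then finishes it. Your route is arguably cleaner and more self-contained, since the paper's claim that all intermediate rooms ``must all have been occupied at some point'' is asserted rather than proved, whereas your interval invariant is a one-line check once stated. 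Two small points: in your middle paragraph you briefly treat a gap between $p$ and $q=p+3$ as a $3$-gap (by the paper's definition that is a $2$-gap, since a gap counts unoccupied rooms), but your final paragraph uses the correct convention $q-p\ge 4$ and the correct count of uncovered interior rooms; and the verification of the interval invariant that you flag as ``to be checked'' does need to be written out, though it is exactly as easy as you predict.
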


\begin{proof}
No final state can have a gap of size $0$, as then it would still be possible to perform a move. Assume for the sake of contradiction it is possible to have a gap of size larger than $2$, and that some sequence of moves results in a gap of size larger than $2$ between rooms $r_i$ and $r_{i}+j$, for some $j>3$. Notice that rooms $r_{i}+1$, $r_{i}+2$, and $r_{i}+3$ are empty.

Because a flat clusteron has an initial state with one violinist in multiple consecutive rooms, rooms $r_i$ through $r_{i}+j$ must all have been occupied by a violinist at some point due to the way that our chip-pushing operation is defined. But by Proposition~\ref{prop:occupiedRooms}, this means that room $r_{i}+2$ should either be occupied by or neighbor a room occupied by a violinist in the final state, contradiction.
\end{proof}

Our goal is to describe all possible final states of clusterons. We start by studying final states up to translation. In other words, given a state $S$, we only consider the segment of rooms starting from the first occupied room to the last occupied room and disregard the absolute index of each room, only caring about the relative distances. We call this the $\textit{shadow}$ of $S$.

We define the shadow $F_{N,k}$ to be the shadow that has $N$ violinists, $N-2$ gaps of size $1$, and $1$ gap of size $2$, which is in between the $k$-th and $(k+1)$-st occupied rooms. For example, the shadow $F_{3,2}$ is $101001$. 

We introduce the set $\mathcal{F}_N$, which depends on $N$, to be the set of all shadows with $N$ violinists that have $N-2$ gaps of size $1$ and $1$ gap of size $2$. Equivalently, we can write
$$\{F_{N,i} \mid 1\le i \le N-1\}=\mathcal{F}_N.$$

Our goal is to prove the following theorem.

\begin{theorem}\label{thm:fsclusterons}
The set $\mathcal{F}_N$ equals the set of all final shadows of a clusteron for all $N>1$, except for the clusterons $12$ and $21$.
\end{theorem}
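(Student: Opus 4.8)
The plan is to prove Theorem~\ref{thm:fsclusterons} by establishing two containments. First I would show that every final shadow of a clusteron of size $N>1$ (other than the exceptional cases $12$ and $21$) lies in $\mathcal{F}_N$; this direction is essentially handed to us by Proposition~\ref{prop:gapsz12}, which already forces all gaps to have size $1$ or $2$, so it remains only to argue that there is exactly one gap of size $2$. I would do this via a counting/parity argument: by Theorem~\ref{thm:suites} and the earlier bijection $B$, it suffices to treat a flat clusteron of size $N$ (the general clusteron propagates to a single-occupancy state whose $B$-image is a flat-clusteron-type suite state, and moves/centroids are preserved). For a flat clusteron the span changes in a controlled way, and the entropy/centroid bookkeeping from Proposition~\ref{prop:increaseEntropy} and Lemma~\ref{lem:2clusteronconstantcentroid} pins down the total ``excess span'': a final state with $N$ violinists, $g_2$ gaps of size $2$ and $N-1-g_2$ gaps of size $1$ has span $N + (N-1) + g_2 = 2N-1+g_2$. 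I would then show $g_2=1$ by showing the span of any final state is exactly $2N$, which follows from tracking that each of the two extreme violinists moves, combined with the fact that no gap of size $\geq 3$ ever appears (Proposition~\ref{prop:occupiedRooms}) — the sum of all gap sizes is forced.

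The harder and more interesting direction is surjectivity: every $F_{N,k}$ with $1\le k\le N-1$ is actually attainable as a final shadow. Here I would proceed by strong induction on $N$. The base cases $N=2$ ($F_{2,1}=1001$, attained in one move) and $N=3$ ($F_{3,1}, F_{3,2}$, both attained, as the excerpt's small examples illustrate) are direct. For the inductive step, starting from a flat clusteron of size $N$, I would make a single first move splitting the run into two runs of sizes $a$ and $N-a$ separated by a gap; by Theorem~\ref{thm:suites} this reduces to the suite picture, and after enough further moves the two groups become ``independent'' (the gap between them grows past $b_a+b_{N-a}$ as in the proof of the termination theorem, Theorem~1), so the final state is a concatenation of a final state of a size-$a$ sub-problem and a final state of a size-$(N-a)$ sub-problem, joined across a gap whose size I must control. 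The key computation is: which gap size sits between the two halves in the end, and how does varying $a$ and the two recursive choices of $F$ realize all values of $k$? I expect this is where the centroid/sumtroid invariant does the real work: by Lemma~\ref{lem:2clusteronconstantcentroid} the \emph{net} displacement is determined by the left/right neighborhoods of each move, so the sumtroid of a final shadow $F_{N,k}$ is a strictly monotone function of $k$ (indeed Table~\ref{tab:4fs} shows the sumtroids $3,1,0,-1,-3$ decreasing as $k$ increases), and I would show that the reachable sumtroids from the $N$-violinist flat clusteron form exactly the set of sumtroids $\{K(F_{N,k}) : 1\le k\le N-1\}$.

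Concretely, I would compute $K(F_{N,k})$ once and for all (with the centroid normalized to $0$) and observe it is injective in $k$; then it suffices to exhibit, for each $k$, \emph{some} terminating sequence of moves achieving that sumtroid, since by the first containment the result must be a member of $\mathcal{F}_N$ and by injectivity it must be $F_{N,k}$ specifically. To produce such sequences I would use the recursive split above: choosing the first move to have left neighborhood $\ell$ and right neighborhood $r$ shifts the sumtroid by $r-\ell$, and by inducting on the two sub-clusterons I can show that the attainable sumtroid shifts tile the required range without gaps. The exceptions $12$ and $21$ genuinely fail because a size-$3$ clusteron concentrated as a crowded room of $2$ adjacent to a single room has only one legal first move, which lands in a single-occupancy state that is already (up to the exceptional small span) not of the form $F_{3,k}$ for both $k$; I would simply verify these two cases by hand and note that Proposition~\ref{prop:multipleoccupancy}'s hypothesis (clusteron with more than one room) is what rules out the worse degeneracies elsewhere.

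The main obstacle I anticipate is the surjectivity induction: making the ``the two halves eventually become independent, and their final states concatenate'' argument fully rigorous, including precisely identifying the size of the gap separating them in the final state (it need not be the gap created by the first move, since subsequent moves from either side can push violinists into it), and then verifying that as the split point $a$ and the recursive targets range over all admissible values, the induced values of $k$ (equivalently, of the sumtroid) cover $\{1,\dots,N-1\}$ exactly once each. I would isolate this as a lemma: ``the set of sumtroids reachable from a flat clusteron of size $N$ is $\{\,\sum(\text{room numbers of }F_{N,k})\;:\;1\le k\le N-1\}$,'' proved by the two-sided induction sketched above, after which the theorem follows by combining it with Proposition~\ref{prop:gapsz12} and the span/parity count.
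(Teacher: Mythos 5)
Your proposal has the right architecture (two containments, with surjectivity as the hard direction), but both halves contain genuine gaps. For the first containment, you reduce ``exactly one $2$-gap'' to the claim that every final state has span exactly $2N$, and you assert this ``follows from tracking that each of the two extreme violinists moves'' together with Proposition~\ref{prop:gapsz12}. That does not follow: there is no conservation law for the span (entropy and centroid say nothing about it), Proposition~\ref{prop:gapsz12} only bounds each gap by $2$, and intermediate states really can have several $2$-gaps simultaneously, so nothing in your bookkeeping rules out a final state of span $2N+1$ with two $2$-gaps. The paper closes exactly this hole with a structural invariant (Lemma~\ref{lem:clusteron2gaps}): between any two $2$-gaps of a propagated state there is always a pair of adjacent occupied rooms, proved by taking a minimal counterexample and analyzing the last move. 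That is the missing idea; your span claim is essentially a restatement of the conclusion rather than a proof of it (the paper itself notes the theorem is \emph{equivalent} to the span being $2N$).

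For surjectivity, your plan of splitting the clusteron into two sub-clusterons that ``eventually become independent'' and concatenating their final states does not work as described, and you correctly flag the obstruction without resolving it: after the first move the two halves are separated by a gap of size $2$, which never grows to the $b_a+b_{N-a}$ threshold from the termination proof, so the halves keep interacting (violinists are pushed into and across that gap). The sumtroid-injectivity part of your plan is sound (it is Proposition~\ref{prop:centroidEnumeration}), but the burden is precisely to \emph{exhibit} a terminating sequence for each target sumtroid, and your sketch does not produce one. The paper's route is different and avoids the interaction problem: it first reduces any clusteron to a flat or flat near-clusteron (Lemma~\ref{lem:clusteronBridge}), then inducts by peeling off a \emph{single} violinist from the left end — by induction the remaining $N$ violinists can reach any $F_{N,i}$, the chip-pushing convention forces the peeled violinist to land adjacent to that shadow, and from there the continuation is unique and yields $F_{N+1,i}$ (or, peeling from the right, $F_{N+1,i+1}$), covering all indices; explicit base cases at $N=5$ handle the small exceptions $101$, $1011$, $1101$, $11011$. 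If you want to salvage your decomposition idea, Proposition~\ref{prop:mergeclusterons} is the correct statement about concatenating two final shadows, but it applies only once the two blocks are already final shadows placed adjacent to one another, not to the two halves produced by the first move.
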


For the clusteron $12$, there only exists one possible set of moves before reaching an ending state: $12 \rightarrow 1011 \rightarrow 11001 \rightarrow 100101$. A similar statement holds true for the clusteron $21$. Both of these can only reach one of the final shadows in the set $\mathcal{F}_N=\{100101,101001\}$.

The theorem is equivalent to proving that all states with span $2N$ and no two violinists in adjacent rooms are possible final states and the only possible final states for all clusterons except for the two special cases in the example above.

Before we can prove Theorem~\ref{thm:fsclusterons}, we first need to prove a few lemmas.

By Proposition~\ref{prop:gapsz12}, we know that gaps in the final state can only be of size $1$ or $2$. We plan to prove that all final states have exactly one gap of size 2. However, an intermediate state might have several gaps of size 2.

\begin{lemma} \label{lem:clusteron2gaps}
In any propagated state $S$ from a clusteron, there will always exist at least one pair of two consecutive occupied rooms between any two gaps of size $2$ in $S$.
\end{lemma}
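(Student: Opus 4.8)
The plan is to argue by induction on the number of moves, tracking the structure of the state between two size-$2$ gaps. The key invariant I would isolate is the following claim: between any two size-$2$ gaps of a propagated state $S$, there exist at least two occupied rooms in a row somewhere strictly between them (equivalently, not every gap strictly between the two size-$2$ gaps can be of size $\geq 1$ all the way across — there must be a "touching pair"). The base case is the clusteron itself, which has no size-$2$ gaps at all (or only trivially few), so the statement holds vacuously. Actually, more carefully: I would phrase the invariant so that it is stable under a single move, then invoke the fact (Proposition~\ref{prop:occupiedRooms}, and the chip-pushing description from Section~\ref{sec:solution}) that occupied-or-neighboring rooms only propagate, together with the local description of what a move does.

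First I would set up notation: suppose $S$ is propagated from a clusteron and has two size-$2$ gaps, say a left one $G_L$ occupying rooms $(p+1,p+2)$ (so $p$ and $p+3$ are occupied) and a right one $G_R$ occupying rooms $(q+1,q+2)$ with $p+3 \le q$. I want to show some two consecutive rooms strictly between $p+3$ and $q+1$ are both occupied. The main idea is to look at the region between $G_L$ and $G_R$ and appeal to Proposition~\ref{prop:occupiedRooms}: every room in that region that is a size-$2$ gap room (the interiors of $G_L$, $G_R$) is empty, but every room of the original clusteron was occupied at some point, and by the chip-pushing characterization in Proposition~\ref{prop:gapsz12}'s proof, if a room and a room two steps away were both ever occupied, the room in between is in $S'$ forever. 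Combining, I expect to deduce that the rooms strictly between the two size-$2$ gaps form a contiguous block that is "dense" in the sense that it cannot consist only of alternating occupied/empty rooms across its whole width — a counting argument on span versus violinist count inside that window should force a touching pair, because two size-$2$ gaps "cost" too much empty space and the centroid/span bookkeeping of Section~\ref{sec:preliminaries} (each move changes span in a controlled way, spans only grow as described in the termination proof) pins down how much room the violinists between the gaps can occupy.

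Alternatively — and this is the route I would actually try first — induct directly on moves. A move takes two adjacent violinists in rooms $(i,i+1)$ and sends them to the nearest empty rooms left and right; in the suite language of Theorem~\ref{thm:suites} this is splitting a suite. A move can \emph{create} a new size-$2$ gap only at the site of the move (the freshly vacated rooms between where the two violinists landed), and it can only \emph{enlarge} the "run of consecutive occupied rooms" structure elsewhere by Proposition~\ref{prop:occupiedRooms}. So if after a move there are two size-$2$ gaps, I would check case by case where the new one sits relative to old ones and where the consecutive-pair witness for the inductive hypothesis went: the only way to destroy a touching pair is to move exactly that pair, but moving it pushes violinists outward, which by the chip-pushing description fills in rooms and hence \emph{creates} a new touching pair adjacent to the move on at least one side (since the violinist pushed toward the other size-$2$ gap lands in a room that was occupied-or-neighboring, keeping the block connected).

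The hard part will be the bookkeeping in the case analysis: ensuring that when a move is performed between two size-$2$ gaps and it eliminates the only witnessing consecutive pair, the resulting configuration still has some consecutive pair in the (possibly now differently delimited) region between the two size-$2$ gaps — and handling the subcase where the move itself creates a \emph{second} new size-$2$ gap so that the "between" region changes. I expect the chip-pushing viewpoint (order of violinists preserved, leftmost/rightmost monotone) plus Proposition~\ref{prop:occupiedRooms} to make each subcase short, but getting the subcases mutually exhaustive without gaps is where the real care is needed.
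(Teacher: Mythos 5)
Your second route is the strategy the paper actually uses (look at the last move before the bad configuration appears and do a case analysis on where it happened), but what you have written is a plan rather than a proof: the entire content of the lemma is the case analysis you defer in your final paragraph, and the two steps that make it close are exactly the ones you leave unverified. First, to dispose of the case where the move creates a new $2$-gap strictly between two existing $2$-gaps, the paper does not chase the witnessing touching pair at all; it takes a \emph{first} state violating the conclusion and, inside it, the two \emph{closest} offending $2$-gaps, so that a freshly created $2$-gap strictly between them immediately contradicts that choice. Without such a minimality device, your inductive hypothesis (``between any two $2$-gaps there is a touching pair'') obliges you to exhibit witnesses for both new pairs of gaps flanking the freshly created one, which is a strictly harder bookkeeping problem than the one you sketch. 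Second, the crux is the case where the last move is performed \emph{at} one of the two offending gaps (the paper's case $p=i$): there one must show that the two rooms just inside the region, $i+2$ and $i+3$, were already empty \emph{before} the move --- otherwise the violinist sent rightward from room $i+1$ would land next to an occupied room and create a touching pair in $S$ --- and conclude that the previous state already contained two $2$-gaps with no touching pair between them, contradicting minimality. Your parenthetical ``the violinist pushed toward the other size-$2$ gap lands in a room that was occupied-or-neighboring, keeping the block connected'' asserts precisely this point instead of proving it.

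Two further concerns. Your claim that a move can create a $2$-gap only at the move site needs justification in the generality of the lemma: it relies on no gap of size $\ge 3$ ever appearing in a propagated state (else a larger gap could shrink into a new $2$-gap far from the move), and on both rooms of the move actually becoming vacant, which fails verbatim when a room is crowded, as clusterons permit. And your first route --- a static count of span versus violinists in the window between the gaps, combined with Proposition~\ref{prop:occupiedRooms} --- cannot succeed: the forbidden perfectly alternating configuration between two $2$-gaps satisfies the occupied-or-neighboring invariant at every room (each empty room of a $1$-gap or $2$-gap borders an occupied one) and has the same room and violinist counts as admissible configurations, so no counting or monotone-invariant argument distinguishes it. Only the dynamic analysis of the last move excludes it, which is why the paper's proof takes that form.
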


\begin{proof}
Assume the contrary. Let state $S$ be the first state where there does not exist a pair of two consecutive occupied rooms between two gaps of size $2$. Say that in state $S$, the closest two gaps of size $2$ satisfying this property are at rooms $(i,i+1)$ and $(j,j+1)$, where $i+1<j-1$. Because this is the first such state, the move made right before reaching state $S$ must have been performed on two violinists residing between rooms $i$ and $j+1$, inclusive. 

Say that the move was made at positions $p,p+1$. This creates a 2-gap at rooms $(p,p+1)$. If $i<p<j$, then this contradicts the fact that $(i,i+1)$ and $(j,j+1)$ are the closest two gaps of size $2$, as then $(p,p+1)$ would be closer to $(i,i+1)$.

But if $p=i$, then consider the state before $S$, namely $S_0$. Because in $S$ there are no consecutively occupied rooms between $(i,i+1)$ and $(j,j+1)$, this means that in $S_0$, rooms $i+2$ and $i+3$ are unoccupied. Otherwise, after performing a move on rooms $(i,i+1)$, the violinist from room $i+1$ would have to go to a room next to another occupied room. However, this would imply that in $S_0$, the 2-gaps $(p,p+1)$ and $(j,j+1)$ would not have any consecutively occupied rooms between them, contradicting the fact that $S$ is the first state with this property. Similar reasoning shows that $p=j$ fails as well.

As such, all cases are exhausted, and we have a contradiction. All gaps of size $2$ must always have at least one pair of two consecutive occupied rooms between them.
\end{proof}

This is enough to show that any final state will have at most one gap of size $2$, or else by Lemma~\ref{lem:clusteron2gaps} it would not be a final state. All final states must also have at least one gap of size $2$, as making a move automatically creates a gap of size $2$ in the subsequent state. As a result, we have the following corollary.

\begin{corollary} \label{col:clusteron2gaps}
All final states from a clusteron of size $N>1$ must have exactly one gap of size $2$ and all single occupancy rooms.  
\end{corollary}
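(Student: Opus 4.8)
Since this is a corollary, the plan is simply to assemble the three ingredients already in hand. Fix a clusteron of size $N>1$ and let $S$ be any final state reached from it. There are two assertions to establish: that every occupied room in $S$ holds exactly one violinist, and that $S$ has precisely one gap of size $2$ (and hence, by Proposition~\ref{prop:gapsz12}, all its remaining gaps have size $1$). The single-occupancy claim is immediate from the earlier theorem stating that all final states of a clusteron of size greater than $1$ have single-occupancy rooms, so the work is entirely in pinning down the number of $2$-gaps.

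For the upper bound, I would argue by contradiction: if $S$ contained two gaps of size $2$, then Lemma~\ref{lem:clusteron2gaps} (applicable since $S$ is a propagated state of a clusteron) guarantees a pair of consecutive occupied rooms lying strictly between them; but two consecutive occupied rooms constitute an available move, contradicting that $S$ is final. Hence $S$ has at most one $2$-gap. For the lower bound, note that because $N>1$ the starting clusteron itself is not final — it has two adjacent occupied rooms (or, in the degenerate single-room case, two violinists in one room, which the chip-pushing formulation still treats as an available move) — so at least one move is performed en route to $S$. Consider the final move, made on some pair of rooms $(p,p+1)$: afterward rooms $p$ and $p+1$ are empty while rooms $p-1$ and $p+2$ are occupied (whether the left neighborhood is $1$, putting a violinist into $p-1$, or larger, in which case $p-1$ was and remains occupied; symmetrically on the right). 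This creates a gap of size exactly $2$, and since no move follows, it survives into $S$. Combining the two bounds gives exactly one $2$-gap, proving Corollary~\ref{col:clusteron2gaps}.

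There is no real obstacle here; the argument is just bookkeeping over Proposition~\ref{prop:gapsz12}, Lemma~\ref{lem:clusteron2gaps}, and the single-occupancy theorem, and it mirrors the informal remark preceding the corollary. The only points warranting a line of care are: (i) confirming that a size-$N$ clusteron with $N>1$ genuinely admits at least one move, so that the "final move" invoked for the lower bound exists; and (ii) being precise that the $2$-gap produced by a move is only guaranteed to persist when that move is the \emph{last} one — an intermediate $2$-gap may well be filled by a subsequent move — which is exactly why the lower bound is phrased in terms of the terminal move.
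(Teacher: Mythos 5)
Your proposal is correct and follows essentially the same route as the paper: single occupancy from the preceding theorem, at most one $2$-gap from Lemma~\ref{lem:clusteron2gaps} (two $2$-gaps would force a pair of adjacent occupied rooms, hence an available move), and at least one $2$-gap because the last move performed necessarily leaves one behind. Your only slightly off remark is the parenthetical claiming chip-pushing supplies a move for a single isolated crowded room --- the paper explicitly declares moves undefined in that case and implicitly restricts to clusterons with more than one room --- but this does not affect the argument.
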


We have now shown that all shadows of the final states belong to $\mathcal{F}_N$. Our next goal is to show which states are achievable.

\begin{lemma} \label{lem:clusteronBridge}
It is always possible to reach a flat clusteron or flat near-clusteron when starting from a clusteron of size $N>1$.
\end{lemma}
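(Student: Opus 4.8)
The plan is to argue by induction on the size $N$ of the clusteron and on the number of moves already performed, reducing a general propagated state back toward a flat clusteron or flat near-clusteron. The key observation is that a flat clusteron is exactly a state whose suite-representation (under the bijection $B$ of Theorem~\ref{thm:suites}) is a single number $N$, and a flat near-clusteron corresponds to a suite-state of the form $x\,0\,y$ with $x+y=N$; more generally, after any single move from a clusteron the suite-state has either one nonzero entry or two nonzero entries separated by one or more zeros. So it suffices to show: starting from any propagated state, one can perform moves until the suite-state has all its violinists concentrated into at most one ``block'' of adjacent suites — equivalently, until we reach something of the form $N$ (flat clusteron) or $x0y$ (flat near-clusteron). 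I would phrase the target invariant in the language of suites throughout, since Theorem~\ref{thm:suites} lets us move freely between the two pictures.

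First I would set up the base cases $N=2,3$ by direct inspection (for $N=2$ the clusteron is already flat; for $N=3$ one checks the handful of clusterons $3$, $12$, $21$ and verifies each can reach $3$ or a flat near-clusteron). For the inductive step, consider an arbitrary propagated state $S$ of size $N>1$; by Proposition~\ref{prop:multipleoccupancy} it has no crowded isolated room, so every crowded room (if any) has an occupied neighbor and hence admits a move. The strategy is: while the state is not yet a flat clusteron or flat near-clusteron, exhibit a move that makes ``progress'' toward one. A natural progress measure is the span together with the number of occupied components; I expect the cleanest choice is to work from one end — say always pick the move involving the two leftmost violinists (or the leftmost crowded room) — and show that repeatedly doing so eventually merges the left part of the configuration into a single consecutive block, after which we recurse on the (smaller) remaining violinists using the inductive hypothesis, finally observing that a 2-gap or 1-gap boundary between the two blocks can be absorbed so that the whole thing becomes flat or near-flat.

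The main obstacle, and the step I would spend the most care on, is controlling how the ``far'' violinist launched rightward by a move interacts with the rest of the configuration: a single move can split a block, create new gaps, or fling a chip arbitrarily far, so a naive span or entropy argument does not obviously decrease. Entropy (Proposition~\ref{prop:increaseEntropy}) strictly increases but is unbounded above along non-terminating play, so by itself it only certifies termination, not that we land on a \emph{flat} clusteron. I would instead try to find a potential that is bounded and strictly monotone under the specific, carefully chosen sequence of moves — for instance, first drive the leftmost gap to $0$ by repeatedly firing across it, using Proposition~\ref{prop:occupiedRooms} to guarantee the relevant rooms stay ``covered'' and Lemma~\ref{lem:clusteron2gaps} to bound the number of simultaneous $2$-gaps — and then induct. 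An alternative, possibly slicker, route is to run the process \emph{backward}: characterize which moves are reversible in the suite picture and show every flat clusteron is reachable from $S$ by checking that $S$ itself lies in the forward orbit of some flat clusteron, but I suspect the forward, end-to-end merging argument is more robust and is the one I would write up first.
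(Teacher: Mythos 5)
Your proposal is an outline rather than a proof: you explicitly leave open the step you yourself identify as the main obstacle (controlling where the violinist ``flung'' by a move lands, and finding a bounded monotone potential), and without that step nothing is established. More importantly, you miss the idea that makes the paper's argument short. The paper does not try to merge a spread-out propagated state back into a block; it never lets the state spread out in the first place. The move-selection rule is: as long as some room is crowded (more than one occupant), perform a move in which at least one of the two rooms involved is crowded --- Proposition~\ref{prop:multipleoccupancy} guarantees such a move is available. The invariant is then elementary: the initial clusteron has no gaps, and a case check (crowded--crowded versus crowded--single) shows that each such move first fills whatever single $1$-gap may exist and creates at most one new $1$-gap, so every state along the way has at most one gap, of size $1$. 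When no crowded room remains, the state is by definition a flat clusteron or flat near-clusteron. Your ``work from the leftmost crowded room'' parenthetical gestures at this, but you never state the invariant, and your default strategy (leftmost move, merging blocks, recursing on $N$) would have to confront exactly the long-range dynamics the paper's rule avoids.

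Two smaller points. A flat near-clusteron has its two runs separated by a single empty room, so under $B$ it corresponds to two adjacent nonzero suites $x\,y$, not to $x\,0\,y$ (which encodes two empty rooms between runs). Also, the bijection $B$ and the suite moves of Theorem~\ref{thm:suites} are defined for single-occupancy room-states and empty an entire suite at once, so they do not directly model moves on a clusteron with crowded rooms; phrasing the flattening process in the suite language, as you propose, would require extending that machinery rather than citing it.
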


\begin{proof}
By Proposition~\ref{prop:multipleoccupancy}, it is possible to continually perform moves on pairs of rooms where at least one room acted upon in each move has more than one occupant until we reach a state in which every room has less than or equal to $1$ violinist.

We claim that while performing this set of moves, there will exist at most one gap in all propagated states from the clusteron, which, if it exists, will have size $1$.

Assume for the sake of contradiction that state $S$ is the first propagated state to violate this claim, and it comes after state $S_0$.

If $S_0$ did not have any gaps, then we have two cases. If the move we perform to get from $S_0$ to $S$ is performed on two crowded rooms, then $S$ will not have any gaps either. If the move is performed on one crowded room and one single-occupancy room, then $S$ will have exactly $1$ gap, which will be of size $1$. Both cases give a contradiction.

On the other hand, if $S_0$ had exactly $1$ gap, which was of size $1$, then we would once more have two cases. If the move we perform to get from $S_0$ to $S$ is performed on two crowded rooms, then $S$ will not have any gaps, as the gap previously present in $S_0$ will get filled by one of the violinists acted upon by our move. If the move is performed on one crowded room and one single-occupancy room, then $S$ will have exactly $1$ gap, which will be of size $1$, as the gap previously present in $S_0$ will again get filled by one of the violinists acted upon by our move, and the single-occupancy room our move acted upon will create a new gap of size $1$. Once again, both cases give a contradiction.
\end{proof}

\begin{lemma} \label{lem:flatStateAchievability}
For all flat or flat near-clusterons $a_1a_2\dots a_{m}$, it is possible to achieve all possible final shadows in the set $\mathcal{F}_N$, with the exception of $11011$, $1011$, $1101$, and $101$.
\end{lemma}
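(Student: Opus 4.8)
The plan is to argue by strong induction on the size $N$, first disposing of the small cases by direct inspection --- this is exactly where the four exceptional starting states surface, namely $101$ (size $2$), $1011$ and $1101$ (size $3$), and $11011$ (size $4$). For $N$ beyond this range I would pass to the suite representation of Theorem~\ref{thm:suites}: a flat clusteron of size $N$ is the one-suite state, written $(N)$; a flat near-clusteron is $(a)(b)$ with $a+b=N$ and $a,b\ge 1$; the target shadow $F_{N,k}$ is the suite-state $1^{k}\,0\,1^{N-k}$ (each suite holding one violinist, with a single empty suite after the $k$-th one); and a move is: choose a suite holding $s\ge 2$ violinists, send $x$ of them to the left neighbour and $s-x$ to the right neighbour for any $1\le x\le s-1$, and leave that suite empty. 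Since the target set $\mathcal{F}_N$ is closed under the left--right reflection, it suffices to reach $F_{N,k}$ for $k\le N-k$.

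The engine of the construction is a \emph{spreading} gadget: if a suite of size $m\ge 1$ sits immediately to the left of an empty suite, then firing that suite while splitting off a single violinist across the empty suite, and repeating, flattens it into $1\,0\,1^{m-1}$ in the slots to its left, with the lone empty suite carried along; a mirror version acts on the right, and an analogous move lets a larger suite feed single violinists across a neighbouring empty suite into a growing flat run. To reach $F_{N,k}$ from $(N)$, I would first fire $(N)$ to $(k)\,0\,(N-k)$ and then flatten the two outer suites toward the centre, the key point being that this must be stopped at the near-final state $1^{k-1}\,0\,2\,0\,1^{N-k-1}$: firing that central $2$ once fills both flanking empty suites and leaves precisely the single empty suite in slot $k+1$, which is $F_{N,k}$. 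Reaching $1^{k-1}\,0\,2\,0\,1^{N-k-1}$ is itself handled recursively, building the central $2$ from one violinist peeled off each side and then invoking the induction hypothesis on the two smaller blocks to flatten them into the required flat runs. Finally, for a flat near-clusteron $(a)(b)$ I would observe that a single move turns it into a state $(\alpha)\,0\,(\beta)$ with $\alpha+\beta=N$; comparing which such states are reachable from $(a)(b)$ against which ones the construction for $(N)$ requires pins down exactly the excluded cases --- for instance $11011=(2)(2)$ can only be moved to $(1)\,0\,(3)$ or $(3)\,0\,(1)$, never to $(2)\,0\,(2)$, which is the only first move from which $F_{4,2}$ can be reached.

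The step I expect to be the main obstacle is controlling the number of empty suites, equivalently the number of $2$-gaps. A member of $\mathcal{F}_N$ carries exactly one of them, yet every move creates one, and a sub-clusteron handled recursively arrives with its own; so the pieces cannot simply be glued, and the naive ``split off a room and finish the rest independently'' approach fails, because a fully flattened sub-block is frozen and can no longer be corrected. Making all the surplus $2$-gaps cancel --- via the slide gadget and the central-$2$ trick, timed so that the surviving gap lands in slot $k+1$ --- is the heart of the argument, and it is also exactly the obstruction that vanishes only once $N$ is large enough, which is why the four small exceptions occur.
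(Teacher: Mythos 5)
Your outline has a genuine gap at exactly the point you yourself flag: the reachability of the near-final suite-state $1^{k-1}\,0\,2\,0\,1^{N-k-1}$ is never established. You propose to obtain it ``recursively, building the central $2$ from one violinist peeled off each side and then invoking the induction hypothesis on the two smaller blocks to flatten them into the required flat runs,'' but the induction hypothesis only produces members of $\mathcal{F}_m$, i.e.\ blocks of the form $1^{j}\,0\,1^{m-j}$ carrying one $2$-gap each, whereas your target requires the two outer blocks to be the gap-free runs $1^{k-1}$ and $1^{N-k-1}$. Those are not final shadows of any sub-clusteron, so the recursion does not type-check; and the spreading gadget, applied to $(k)\,0\,(N-k)$, deposits its surviving empty suite at the far end of each spread block rather than adjacent to the centre (a direct computation from $(k)\,0\,(N-k)$ yields states such as $1\,0\,1^{k-2}\,2\,1^{N-k-2}\,0\,1$, not $1^{k-1}\,0\,2\,0\,1^{N-k-1}$). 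Cancelling the surplus $2$-gaps is, as you say, the heart of the argument, and it is precisely the part that is missing.

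Ironically, the ``split off a room and finish the rest independently'' idea that you dismiss is the one that works, and it is what the paper does. Viewing the operation as chip-pushing, peel off only the leftmost violinist; the remaining $N$ violinists form a flat (near-)clusteron which, by induction, can be driven to any $F_{N,r}$, and the chip-pushing bookkeeping guarantees that the peeled violinist ends up in the room immediately to the left of that shadow. The sub-block is indeed frozen on its own, but the adjacent extra violinist re-enables moves, and from the configuration ``one violinist adjacent to $F_{N,r}$'' there is a unique forced sequence of moves terminating in $F_{N+1,r}$ (respectively $F_{N+1,r+1}$ when the violinist is adjoined on the right) --- this is the paper's Lemma~\ref{lem:nextToShadow}. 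That forced cascade handles the $2$-gap bookkeeping for free and sweeps out all of $\mathcal{F}_{N+1}$ as $r$ varies. The small cases and the four exceptions are checked by hand in both approaches, and your analysis of $11011=(2)(2)$ is correct; but as written your inductive step does not go through.
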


\begin{proof}
For the flat clusterons or near-clusterons of size $N<5$, we can write out all possible sets of moves, namely for $1$, $11$, $111$, $1111$, $11101$, and $10111$.

The states $1$ and $11$ can easily be seen to achieve all final shadows. For the state $111$, we analyze its suite state, $3$, below.
\[
3 \rightarrow 102 \rightarrow 1101
\]
\[
3 \rightarrow 201 \rightarrow 1011
\]
Similarly, for the state $11101$, we analyze its suite state, $31$, below.
\[
31 \rightarrow 202 \rightarrow 1012 \rightarrow 10201 \rightarrow 11011
\]
\[
31 \rightarrow 103 \rightarrow 1102 \rightarrow 11101
\]
\[
31 \rightarrow 103 \rightarrow 1201 \rightarrow 2011 \rightarrow 10111
\]
The state $10111$ also achieves all final shadows by symmetry. Finally, $1111$ can be seen to achieve all final states in Figure~\ref{fig:4exampletree}.

For $N\ge 5$, we prove the lemma by induction. It can be verified that all final shadows are achievable for all flat clusterons and flat near-clusterons with size $N=5$.

For the base case of $N=5$, we first show that all final shadows are achievable from the room-state $1001111$, which corresponds to the suite-state of $104$ below.
\[
104 \rightarrow 1103 \rightarrow 11201 \rightarrow 12011 \rightarrow 20111 \rightarrow 101111 
\]
\[
104 \rightarrow 1202 \rightarrow 2012 \rightarrow 20201 \rightarrow 101201 \rightarrow 102011 \rightarrow 110111
\]
\[
104 \rightarrow 1301 \rightarrow 2021 \rightarrow 10121 \rightarrow 10202 \rightarrow 11012 \rightarrow 110201 \rightarrow 111011
\]
\[
104 \rightarrow 1103 \rightarrow 11102 \rightarrow 111101
\]

By symmetry, all final states are also achievable from the room-state $1111001$.

When starting with a flat clusteron or a flat near-clusteron of size $5$, we can always find two consecutive occupied rooms at the beginning or end of that clusteron. By making a move on these two rooms, we can reach either the room-state $1001111$ or $1111001$. We can conclude that all flat and flat near-clusterons of size $5$ can also achieve any final state.

For the inductive step, we consider the operation as chip-pushing. We assume the result for $N$ and prove it for $N+1$. 

For any flat clusteron or near-clusteron of size $N+1$, consider the set of violinists excluding the one on the very left. This set of violinists is a flat clusteron or near-clusteron of size $N$. By our inductive assumption, we can make moves to turn this clusteron into any shadow in $\mathcal{F}_N$. But because we are considering our operation as chip-pushing, after performing the moves to create that shadow in $\mathcal{F}_N$, the leftmost violinist will get pushed such that it is in the room directly to the left of the second-leftmost violinist. In particular, this means that we can create all shadows of size $N+1$ that consist of some violinist directly to the left of any shadow of size $N$, and by symmetry, any shadow that consists of some violinist directly to the right of any shadow of size $N$.

Placing a violinist directly to the left of shadow $F_{N,i}$ and performing the only possible sequence of moves until a final state is reached will create the shadow $F_{N+1,i}$, and placing a violinist directly to the right and performing the only possible sequence of moves will create the shadow $F_{N+1,i+1}$. Since $i$ ranges from $0$ to $N-1$, we can see that all $F_{N+1,i'}$ can be created via one of these transformations for $0\le i' \le N$, so we are done.
\end{proof}

\begin{remark}
The exceptions listed in Lemma~\ref{lem:flatStateAchievability} fail because there are certain final shadows they cannot reach. The state $101$ cannot reach the final shadow $1001$, the state $1011$ cannot reach $101001$, the state $1101$ cannot reach $100101$, and $11011$ cannot reach $10100101$.
\end{remark}

We can now completely analyze all clusterons of size 4 and show that we can achieve all final shadows for each.

\begin{example}\label{ex:allShadowsN=4}
Suppose $N=4$. Up to symmetries, all possible clusterons are 1111, 121, 211, 22, 21. The clusteron 31 after one move becomes 121, and the clusteron 22 after one move becomes 1111. The 1111 case is completely analyzed at the beginning of the paper in Example~\ref{ex:fClusteron4}.

We are left to analyze cases 121 and 211.

Case 211 has the following possibilities:
$$211 \rightarrow 11011 \rightarrow 100111 \rightarrow 1010011 \rightarrow 10101001$$
$$211 \rightarrow 12001 \rightarrow 101101 \rightarrow 110011 \rightarrow 1001011 \rightarrow 10011001 \rightarrow 10100101$$
$$211 \rightarrow 11011 \rightarrow 111001 \rightarrow 1100101 \rightarrow 10010101.$$

A move can be made on the clusteron $121$ to turn it into the room-state $11101$, which we have shown in Lemma~\ref{lem:flatStateAchievability} can achieve all final shadows.

Case 10111 can lead to the following possibilities:
$$10111 \rightarrow 111001 \rightarrow 1100101 \rightarrow 10010101$$
$$10111 \rightarrow 110011 \rightarrow 1001011 \rightarrow 10011001 \rightarrow 10100101.$$
By symmetry, the final shadow 10101001 can be achieved from case 11101.
\end{example}

We are now ready to prove Theorem~\ref{thm:fsclusterons}.
 
\begin{proof}[Proof of Theorem~\ref{thm:fsclusterons}]

By Corollary~\ref{col:clusteron2gaps}, we showed that all possible final shadows are in $\mathcal{F}_N$. 

For $N\ge 5$, the fact that every shadow is achievable is immediate after combining Lemma~\ref{lem:clusteronBridge} and Lemma~\ref{lem:flatStateAchievability}.

We are left with cases of small $N$. We can directly check that all shadows are achievable for $N=1,2,3,4$.
\end{proof}

Being able to describe all final shadows in such a manner has some immediate consequences on the centroids of final states.

\begin{proposition}\label{prop:centroidEnumeration}
All distinct final states have unique centroids.
\end{proposition}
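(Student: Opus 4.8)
The plan is to reduce the statement to a short arithmetic argument by invoking the classification of final shadows. By Theorem~\ref{thm:fsclusterons} together with Corollary~\ref{col:clusteron2gaps}, every final state of a clusteron of size $N>1$ has shadow $F_{N,k}$ for some $k$ with $1\le k\le N-1$, and such a state is then completely determined by the pair $(k,p)$, where $p$ is the label of its leftmost occupied room (the shadow fixes everything up to translation, and $p$ fixes the translation). Since the sumtroid is $N$ times the centroid with $N$ a fixed positive integer, two final states have equal centroids exactly when they have equal sumtroids; so it suffices to show that the assignment $(k,p)\mapsto K$ sending a final state to its sumtroid is injective.

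First I would compute the sumtroid of the state with shadow $F_{N,k}$ and leftmost occupied room at label $p$. Unwinding the definition of $F_{N,k}$, its occupied rooms sit at labels $p+2(j-1)$ for $1\le j\le k$ and at $p+2(j-1)+1$ for $k<j\le N$, since the single $2$-gap following the $k$-th room shifts every later room by an extra $+1$. Summing these labels gives
\[
K \;=\; Np \;+\; 2\sum_{j=1}^{N}(j-1) \;+\;(N-k) \;=\; Np + N^2 - k .
\]
Now suppose, for contradiction, that two distinct final states have the same centroid, hence the same sumtroid $K$. Writing their data as $(k,p)$ and $(k',p')$, the formula yields $Np+N^2-k = Np'+N^2-k'$, i.e. $N(p-p')=k-k'$. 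But $1\le k,k'\le N-1$ forces $|k-k'|\le N-2<N$, whereas $|N(p-p')|$ is either $0$ or at least $N$; hence $p=p'$ and $k=k'$, so the two states coincide, a contradiction. (For $N=1$ there is a unique final state and nothing to prove; the two exceptional clusterons $12$ and $21$ have size $N=3$, and their final states form a subset of the states just analyzed, so the same bound applies.)

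I expect no genuine obstacle here: the substance of the proposition is entirely carried by Theorem~\ref{thm:fsclusterons}, which pins the final shadows down to the $N-1$ candidates $F_{N,k}$ whose ``internal'' sumtroids $N^2-k$ differ pairwise by less than $N$. The only thing to watch is the labeling convention: depending on the parity of $N$, the leftmost-room label $p$ ranges over $\ZZ$ or over $\ZZ+\tfrac12$, so in either case $p-p'$ is an integer and the divisibility step ``$N\mid k-k'$ with $|k-k'|<N$'' goes through verbatim. It is also worth noting that the argument proves a slightly stronger fact than stated: any two \emph{distinct} states (not merely final ones) whose shadows lie in $\mathcal{F}_N$ have distinct sumtroids, so in particular distinct final states of \emph{any} size-$N$ clusterons are separated by their centroids.
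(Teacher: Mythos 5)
Your proposal is correct and follows essentially the same route as the paper: compute the sumtroid of a final state as an affine function $Np+N^2-k$ of the leftmost room $p$ and the $2$-gap index $k$ (the paper writes the centroid as $r+N-\frac{k}{N}$), then observe injectivity because $1\le k\le N-1$ pins down $k$ from the residue and $p$ from the rest. You merely spell out the divisibility step that the paper leaves implicit.
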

\begin{proof}
If the leftmost violinist is in room $r$, and the double is the $k$-th gap, then the centroid can be computed to be:
\[r+N - \frac{k}{N}.\]
The centroid is distinct for each pair $(r,k)$.
\end{proof}
\begin{corollary}\label{col:cogcongruency}
Final states are congruent up to translation if and only if the centroid has the same fractional part.
\end{corollary}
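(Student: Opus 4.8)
The plan is to leverage Proposition~\ref{prop:centroidEnumeration}, which gives us an explicit formula for the centroid of a final state in terms of the two geometric parameters that determine it: the room $r$ of the leftmost violinist and the index $k$ of the size-$2$ gap. From that proposition, a final state has centroid $r + N - \frac{k}{N}$, where $1 \le k \le N-1$, so the fractional part of the centroid is exactly the fractional part of $-\frac{k}{N}$, equivalently $\frac{N-k}{N}$ reduced modulo $1$. Since two final states are congruent up to translation precisely when they have the same shadow, and the shadow of a final state is determined by $k$ alone (it is $F_{N,k}$, by Corollary~\ref{col:clusteron2gaps} and the definition of $\mathcal{F}_N$), the statement reduces to showing that the map $k \mapsto \{-k/N\}$ is injective on $\{1, \dots, N-1\}$.

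First I would argue the forward direction: if two final states are congruent up to translation, they have the same shadow $F_{N,k}$, hence the same value of $k$; their centroids are $r_1 + N - \frac{k}{N}$ and $r_2 + N - \frac{k}{N}$ for integers $r_1, r_2$ (or half-integers, but the discrepancy $r_1 - r_2$ is always an integer since a translation shifts every room label by a common integer amount), so the two centroids differ by the integer $r_1 - r_2$ and therefore have the same fractional part. Conversely, suppose two final states have centroids with the same fractional part. Writing their centroids as $r_1 + N - \frac{k_1}{N}$ and $r_2 + N - \frac{k_2}{N}$, equality of fractional parts forces $\frac{k_1}{N} \equiv \frac{k_2}{N} \pmod 1$, i.e. $k_1 \equiv k_2 \pmod N$. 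But $k_1, k_2 \in \{1, \dots, N-1\}$, so $k_1 = k_2$; hence the two states have the same shadow $F_{N,k_1} = F_{N,k_2}$ and are congruent up to translation.

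The only genuinely delicate point is bookkeeping about whether the room labels are integers or half-integers (the paper fixes the labeling so the starting state has centroid $0$, which makes the parity of $N$ matter) and making sure that "congruent up to translation" is matched with "differ by an integer shift." This is not a real obstacle: a translation of a state changes every room label by the same integer, so it changes the centroid by that integer, leaving the fractional part invariant; and two states supported on the same type of lattice with the same fractional part of the centroid and the same shadow are literally translates of one another. I would state this correspondence explicitly at the start and then the equivalence falls out of the injectivity of $k \mapsto \{-k/N\}$ on $\{1,\dots,N-1\}$, which is immediate. In short, the heart of the argument is Proposition~\ref{prop:centroidEnumeration} plus the trivial observation that distinct residues in $\{1,\dots,N-1\}$ modulo $N$ give distinct fractional parts; everything else is unwinding definitions.
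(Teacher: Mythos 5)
Your argument is correct and is essentially the route the paper intends: the corollary is stated as an immediate consequence of Proposition~\ref{prop:centroidEnumeration}, whose formula $r+N-\frac{k}{N}$ shows the fractional part of the centroid determines $k$ (hence the shadow) while $r_1-r_2$ is always an integer. You have simply written out the unwinding that the paper leaves implicit, including the correct handling of the half-integer labeling for even $N$.
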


We have been able to describe all final shadows of a clusteron. We now work to extend this result to prove a statement on all final states of a clusteron, starting with Proposition~\ref{prop:N-1leftorright}.

\begin{proposition} \label{prop:N-1leftorright}
In any propagated state from a flat clusteron, the furthest that a violinist can move from its original room is bounded above by $N-1$.
\end{proposition}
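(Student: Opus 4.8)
The plan is to prove this by induction on $N$, using the chip-pushing formulation so that the left-to-right order of violinists is preserved. For $N=1$ and $N=2$ the bound $N-1$ is immediate (no violinist moves for $N=1$, and for $N=2$ each violinist moves exactly distance $1$ to reach the final state). Assume the bound holds for all flat clusterons of size at most $N$, and consider a flat clusteron of size $N+1$ with violinists $v_1,\dots,v_{N+1}$ in consecutive rooms. Label the rooms so that $v_1,\dots,v_{N+1}$ start in rooms $1,2,\dots,N+1$. I would track, for each violinist $v_j$, the quantity $d_j$ equal to (signed) displacement from its starting room, and show $|d_j|\le N$ throughout.

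The key observation I would exploit is the interaction between the leftmost/rightmost violinist and the rest. Split into two sub-populations: $S_{\mathrm{left}}=\{v_1,\dots,v_j\}$ and $S_{\mathrm{right}}=\{v_{j+1},\dots,v_{N+1}\}$ for an appropriate cut $j$. Any move acts either entirely within one sub-population or on the boundary pair $(v_j,v_{j+1})$; a boundary move sends $v_j$ left and $v_{j+1}$ right, so it only ever decreases the minimum room of $S_{\mathrm{left}}$ and increases the maximum room of $S_{\mathrm{right}}$. The heart of the argument is a bound on how far a violinist in, say, $S_{\mathrm{right}}$ can travel: each boundary move that pushes into $S_{\mathrm{right}}$ inserts one new occupied room at the left end of that block, and between any two such boundary moves the block $S_{\mathrm{right}}$ evolves as a propagated state of a clusteron of size $\le N$, to which the inductive hypothesis (together with Theorem~\ref{thm:suites} and Proposition~\ref{prop:gapsz12} controlling gap structure) applies. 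A more direct route: observe that a violinist $v_j$ can never pass another violinist, so the displacement of $v_j$ is controlled by the span of the current state restricted to $\{v_1,\dots,v_j\}$ or $\{v_j,\dots,v_{N+1}\}$; since by Corollary~\ref{col:clusteron2gaps}-type reasoning all propagated states of a size-$m$ flat clusteron have span at most $2m-1$ in the final state (and intermediate states have span at most $2m-1$ as well, since moves never increase total "gap budget" beyond what the final state allows — this needs care), the violinist $v_j$ lies within a window of size $2\min(j, N+2-j)-1$ anchored at its original block, giving $|d_j|\le 2\min(j,N+2-j)-1-1 \le N$.

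I would then tighten this to exactly $N-1$ by noting that a violinist starting in room $i$ of the clusteron $\{1,\dots,N\}$ can reach, at the extreme, only the configuration $F_{N,1}$ or $F_{N,N-1}$ pattern for the sub-block it belongs to: the leftmost violinist ends no further left than room $1-(N-1)$ (it gets pushed once per move it participates in, and there are at most $N-1$ relevant "pushes" before the sub-block to its right has stabilized with all $1$-gaps), and symmetrically for the rightmost. For an interior violinist $v_j$, its leftward travel is bounded by the final span of $\{v_1,\dots,v_j\}$ minus $j$, which is at most $(2j-1)-j = j-1 \le N-1$, and its rightward travel by $(2(N+1-j)-1)-(N+1-j) = N-j \le N-1$.

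The main obstacle I anticipate is the claim that intermediate (non-final) states of a size-$m$ flat clusteron have span bounded by $2m-1$: while final states have exactly $m-2$ $1$-gaps and one $2$-gap (span $2m-1$), an intermediate state can have several $2$-gaps (as noted before Lemma~\ref{lem:clusteron2gaps}), so span could a priori exceed $2m-1$. Resolving this requires Lemma~\ref{lem:clusteron2gaps}: between any two $2$-gaps there is a pair of consecutive occupied rooms, so $k$ gaps of size $2$ force at least $k-1$ extra occupied-room adjacencies, i.e. at least $k-1$ gaps of size $0$; a counting argument on gap sizes (total occupied rooms $= m$, number of gaps $= m-1$ for a span-contiguous block, sum of gap sizes $=$ span $- m$) then shows the span cannot exceed $2m-1$ in any propagated state. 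Making this counting rigorous — in particular handling propagated states that are not yet "contiguous blocks with gaps" but may have already split off pieces — is the delicate part, and I would devote the bulk of the proof to it, likely by first reducing via the suite bijection $B$ to a clean statement about suite-states and then inducting.
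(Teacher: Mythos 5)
Your overall strategy --- bound the span of \emph{every} propagated state via Lemma~\ref{lem:clusteron2gaps} and then locate each violinist inside that window --- is viable, and it would actually handle the ``any propagated state'' clause more directly than an argument about final positions alone. But as written there are two genuine problems. First, a pervasive off-by-one: a final state with $m$ violinists, $m-2$ gaps of size $1$ and one gap of size $2$ has span $m+(m-2)+2=2m$, not $2m-1$; likewise your gap count for intermediate states ($k$ two-gaps force at least $k-1$ zero-gaps among the $m-1$ gaps, so the gap sizes sum to at most $2k+(m-2k)=m$) yields span at most $2m$, again not $2m-1$. This is not cosmetic: with your figures the derived displacement bounds undercut $N-1$, which is impossible since $N-1$ is attained (e.g.\ by the leftmost violinist under the all-leftmost-moves sequence), and after correcting the span the bare window argument gives only $N$ or $N+1$ for the extremal violinists.

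Second, and more fundamentally, the window is not ``anchored at its original block'' --- that anchoring is the entire content of the proposition. Your bound for an interior $v_j$ in terms of the span of $\{v_1,\dots,v_j\}$ is circular as stated, because the location of that sub-block is determined by the location of $v_1$, which is exactly what needs to be bounded; and your justification for $v_1$ itself (``at most $N-1$ relevant pushes before the sub-block to its right has stabilized'') is an assertion, not an argument. The missing ingredient is a fixed reference point: after the first move some violinist occupies the room one past the original right end of the clusteron and, under chip-pushing, never again moves left; combined with span $\le 2N$ this pins $v_1$ at most $N-1$ rooms left of its start, and then order preservation ($v_j \ge v_1 + (j-1)$) handles every interior violinist with no sub-block span needed. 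This anchoring via the extremal violinists, together with the exact final span from Theorem~\ref{thm:fsclusterons}, is how the paper proceeds (by induction on the violinist's index rather than on $N$). If you fix the arithmetic and supply the anchor, your intermediate-state span route becomes a clean alternative proof; as it stands, the crucial step is missing.
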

\begin{proof}
We use induction on the index of our violinists to show that no violinist can travel more than $N-1$ rooms to its left. Consider our operation as chip-pushing, which imposes an ordering on violinists that is preserved across moves, and say that the first violinist is in room $0$. After the first move, some violinist will move into room $N$. By the idea of chip-pushing, the violinist now in room $N$ can never move to a room to the left of room $N$. By Theorem~\ref{thm:fsclusterons}, the span of the final state of any clusteron is exactly $2N$, so the leftmost room that the first violinist can eventually occupy is room $-N+1$, as the first violinist can only travel to the left. This is exactly $N-1$ rooms to the left of room $0$, so we have proven the desired result for the $1$-st violinist. 

This is our base case. For the inductive step, we assume the result for the $i$-th violinist and prove it for the $(i+1)$-st violinist. Because the $(i+1)$-st violinist starts $1$ room to the right of the $i$-th violinist, the $(i+1)$-st violinist must also end at least one room to the right of the $i$-th violinist, and by the inductive assumption the number of times the $i$-th violinist can move to the left is bounded above by $N-1$, the same bound holds for the number of times the $(i+1)$-st violinist can move to the right.

We can similarly prove the desired result for the rightmost room that a violinist can travel to.

To note that $N-1$ is achievable for the leftmost violinist in particular, we can see that performing moves on the sequence of rooms $(N-2,N-1),(N-4,N-2),\dots,(-N+2,-N+3)$ causes the violinist starting in room $0$ to end in room $-N+1$.
\end{proof}

We start by proving a supporting lemma that describes what happens when we place a violinist next to a shadow of a final state.

\begin{lemma}\label{lem:nextToShadow}
Say that we have a state with $N+1$ violinists that has a violinist in some room $k$, and next to it we have a shadow $F_{N,r}$ with the leftmost violinist in room $k+1$ for some $1\le r \le N-1$. Then the only possible final state from here is $F_{N+1,r}$ with the leftmost violinist in room $k-1$.
\end{lemma}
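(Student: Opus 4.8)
The plan is to show that from this configuration every move is forced, and to track the unique resulting sequence to the claimed final state. The state consists of the violinist in room $k$ together with the occupied rooms of the shadow $F_{N,r}$, which start at room $k+1$. Since $F_{N,r}$ has all gaps of size $1$ or $2$, it contains no two adjacent occupied rooms, so the only adjacent pair of occupied rooms in the whole state is $(k,k+1)$. Hence the first move is forced: the violinist in room $k$ is pushed to the (empty) room $k-1$, and the violinist in room $k+1$ is pushed to room $k+2$, the nearest empty room on its right.

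For the rest I would induct on $r$. If $r=1$, this first move already reaches a final state: the new configuration is $k-1,\,k+2,\,k+4,\dots$, which has a single $2$-gap — between its first two occupied rooms — and all remaining gaps of size $1$, hence equals $F_{N+1,1}$ with leftmost violinist in room $k-1$, and it has no adjacent occupied rooms. If $r\ge 2$, then after the forced first move the state is the isolated violinist in room $k-1$ together with a violinist in room $k+2$ placed immediately to the left of a shadow $F_{N-1,r-1}$ beginning in room $k+3$ (deleting the leftmost occupied room of $F_{N,r}$ leaves exactly $F_{N-1,r-1}$ when $r\ge 2$). By the induction hypothesis applied to this sub-configuration, its evolution is forced and ends at $F_{N,r-1}$ with leftmost violinist in room $(k+2)-1=k+1$; in particular, since in chip-pushing the leftmost violinist of a group only moves left, throughout the sub-process every one of its violinists stays in rooms $\ge k+1$. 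Consequently room $k-1$ is never consulted when computing any of these moves and the violinist there is never adjacent to anything, so the sub-process in the full state proceeds exactly as in isolation, and the lone violinist in room $k-1$ never moves. Re-attaching it to $F_{N,r-1}$ (leftmost in room $k+1$) inserts a $1$-gap in front, leaving the unique $2$-gap still between occupied rooms $r$ and $r+1$; thus the final state is $F_{N+1,r}$ with leftmost violinist in room $k-1$.

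One can instead track the forced sequence directly: at step $m=1,\dots,r$ the unique available move is on the pair $(k+2m-2,\,k+2m-1)$, and a short computation shows that after step $m$ the state is ``$k-1,k+1,\dots$ followed by a $2$-gap followed by the still-unprocessed tail of $F_{N,r}$'', which after step $r$ is $F_{N+1,r}$ shifted so that the leftmost violinist is in room $k-1$. A useful consistency check is that each of these moves has both its left and right neighborhoods equal to $1$ (the acted-on pair is flanked on each side by at least two empty rooms), so by Lemma~\ref{lem:2clusteronconstantcentroid} the centroid is constant; and one verifies by a direct computation that the starting state and $F_{N+1,r}$ with leftmost violinist in room $k-1$ indeed have equal centroids.

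The main obstacle is the bookkeeping: confirming that no second adjacent pair of occupied rooms ever appears before the process terminates, so that the sequence really is forced at every step, and keeping the room indices straight through the moves. This is exactly where the $r=1$ versus $r\ge 2$ split and the ``room $k-1$ is never consulted'' observation have to be made carefully.
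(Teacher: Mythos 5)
Your proposal is correct and, in its ``track the forced sequence directly'' form, is essentially identical to the paper's proof, which simply performs the uniquely available move at each step, $(k,k+1),(k+2,k+3),\dots$, and notes that forcedness of every move yields uniqueness of the final state; the inductive wrapper and the centroid consistency check are harmless extras. (Your last forced move $(k+2r-2,k+2r-1)$ is in fact the correct index; the paper's printed endpoint $(k+2r-3,k+2r-2)$ appears to be an off-by-one slip.)
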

\begin{proof}
We perform moves on the following sequence of rooms: $(k,k+1)$, $(k+2,k+3)$, $\dots$, $(k+2r-3,k+2r-2)$, at which point we can no longer make any moves, and we end up with a final state with shadow $F_{N+1,r}$ with the leftmost violinist in room $k-1$. At each step in the process, only one move can be made, which is why this is the only possible achievable final state.
\end{proof}

Now we are ready to prove the main theorem.

\begin{theorem}\label{thm:clusteronfstrans}
The final states of a flat clusteron of size $N$ starting with the leftmost violinist in room $0$ can be expressed as one of the following:
\begin{itemize}
    \item the shadow $F_{N,1}$ with first violinist in room $-N+1$
    \item the shadow $F_{N,N-1}$ with first violinist in room $-1$
    \item the shadow $F_{N,r}$ for any $1 \le r \le N-1$ with first violinist in room $k$ for all $-N+2\le k \le -2$.
\end{itemize}
\end{theorem}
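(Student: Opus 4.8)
## Proof proposal

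The plan is to prove this theorem by combining the centroid/sumtroid bookkeeping from Section~\ref{sec:preliminaries} with the achievability results already established for shadows. The theorem has two halves: first, that every final state of the flat clusteron with leftmost violinist in room $0$ must be one of the listed configurations (the \emph{only} direction), and second, that every listed configuration is in fact achievable (the \emph{existence} direction). I would handle these separately.

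For the \emph{only} direction, I would start from what we already know: by Theorem~\ref{thm:fsclusterons} (equivalently Corollary~\ref{col:clusteron2gaps}), every final state has shadow $F_{N,r}$ for some $1 \le r \le N-1$, so the only freedom left is the absolute position of the leftmost violinist, say room $k$. The key constraint is Proposition~\ref{prop:N-1leftorright}: no violinist moves more than $N-1$ rooms from its starting position. Since the leftmost violinist starts in room $0$ and can only move left (chip-pushing), we get $k \ge -(N-1) = -N+1$. For the upper bound on $k$, I would apply Proposition~\ref{prop:N-1leftorright} to the \emph{rightmost} violinist, which starts in room $N-1$ and can only move right: in shadow $F_{N,r}$ the rightmost violinist sits in room $k + (2N-1) - 1 = k + 2N - 2$ (span is $2N$), so $k + 2N - 2 \le (N-1) + (N-1) = 2N-2$, giving $k \le 0$. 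Wait — that gives $k\le 0$, which is too weak; I need to be more careful and instead argue that $k=0$ and $k=-N+1$ are the boundary cases that force $r$ to be extremal. The cleaner approach: track the sumtroid. The initial sumtroid of the flat clusteron with leftmost violinist in room $0$ is $0 + 1 + \cdots + (N-1) = \binom{N}{2}$. The final state $F_{N,r}$ with leftmost violinist in room $k$ has sumtroid $\sum_{j=0}^{N-1}(k+j) + r = Nk + \binom{N}{2} + r$ (the extra $+r$ accounts for the size-$2$ gap after the $r$-th violinist shifting the last $N-r$ violinists right by one). Now, each move changes the sumtroid by $r_{\text{move}} - \ell_{\text{move}}$ where these are bounded: for the leftmost violinist the left neighborhood is $1$, etc. The genuinely useful fact is Proposition~\ref{prop:N-1leftorright} combined with the observation that if the leftmost violinist reaches room $-N+1$ (the extreme left), then by the argument in that proposition's proof the rightmost violinist must already be pinned in room $N$, forcing a very specific shadow — I would show this forces $r=1$. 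Symmetrically, $k = -1$ with $r = N-1$. The intermediate range $-N+2 \le k \le -2$ allowing all $r$ needs a dimension count: I would argue that the only obstructions are at the two extremes.

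For the \emph{existence} direction, I would induct on $N$, using Lemma~\ref{lem:nextToShadow} as the main engine. For the extreme cases $F_{N,1}$ with leftmost violinist in room $-N+1$: the explicit move sequence $(N-2,N-1), (N-4,N-2), \ldots, (-N+2, -N+3)$ from the end of the proof of Proposition~\ref{prop:N-1leftorright} already does this — I would verify it lands on shadow $F_{N,1}$. The symmetric sequence handles $F_{N,N-1}$ with leftmost violinist in room $-1$. For the middle range: given a target $F_{N,r}$ with leftmost violinist in room $k$ where $-N+2 \le k \le -2$, I would split off either the leftmost or rightmost violinist, apply the inductive hypothesis to the remaining flat clusteron of size $N-1$ to reach an appropriately-placed $F_{N-1, r'}$, and then invoke Lemma~\ref{lem:nextToShadow} to absorb the split-off violinist, which adds it on the correct side and produces $F_{N,r}$ or $F_{N,r+1}$ with the leftmost violinist pushed one room further out. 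Tracking how the position $k$ and the index $r$ transform under "add a violinist on the left" versus "add a violinist on the right" — each operation shifts $k$ by $-1$ and either fixes $r$ or increments it — lets me reach every $(k,r)$ pair in the claimed range by choosing the side appropriately at each of the $N - (\text{base size})$ steps.

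The main obstacle will be the \emph{only} direction, specifically proving sharply that $k = -N+1$ forces $r = 1$ (and the mirror statement) and that no other combinations of extreme $k$ with non-extreme $r$ are possible. Proposition~\ref{prop:N-1leftorright} alone bounds $k \in \{-N+1, \ldots, ?\}$ but does not by itself rule out, say, $F_{N,2}$ with leftmost violinist in room $-N+1$. I expect the resolution is to revisit the proof of Proposition~\ref{prop:N-1leftorright}: the leftmost violinist attains displacement $N-1$ only if at the moment of the first move a violinist is sent to room $N$ and thereafter that violinist never moves, which — together with the final span being exactly $2N$ — pins the rightmost violinist to room $N$ and hence forces the size-$2$ gap to sit immediately after the leftmost violinist, i.e. $r=1$. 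Making this rigidity argument airtight, rather than the routine sumtroid arithmetic and the inductive existence construction, is where the real work lies.
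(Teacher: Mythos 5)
Your existence argument (induct on $N$, split off an end violinist, apply the inductive hypothesis to the remaining flat clusteron of size $N-1$, then absorb the split-off violinist via Lemma~\ref{lem:nextToShadow}, using the reflection symmetry to cover the other half of the $(r,k)$ range) is essentially the paper's construction. The gap is in the ``only'' direction, and it sits exactly where you say the real work lies --- but the resolution you sketch does not close it. You propose that if the leftmost violinist ends in room $-N+1$, then the rightmost violinist is pinned in room $N$, and that this pinning ``forces the size-$2$ gap to sit immediately after the leftmost violinist.'' That last inference is a non sequitur: every shadow $F_{N,r}$ has span exactly $2N$, so for \emph{any} $r$ a final state with leftmost violinist in room $-N+1$ has its rightmost violinist in room $N$. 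Knowing both endpoints therefore says nothing about where the $2$-gap is, and Proposition~\ref{prop:N-1leftorright} alone cannot rule out, say, $F_{N,2}$ with leftmost violinist in room $-N+1$ --- which is precisely the case you flagged as unresolved.

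The missing ingredient, which is the actual crux of the paper's argument, is a statement about move sequences rather than about final positions: if the sequence of moves ever deviates from ``always perform the leftmost available move,'' then the leftmost violinist is pushed left at least twice (once on the first move, and again later because some violinist is eventually pushed into the room directly to his right), hence ends in room $-2$ or lower; symmetrically, any deviation from ``always rightmost'' forces the rightmost violinist into room $N+1$ or higher. From this, $k=-N+1$ (equivalently, rightmost violinist in room $N$) can arise only from the all-rightmost-moves sequence, which is deterministic and lands on $F_{N,1}$; likewise $k=-1$ arises only from all-leftmost-moves, landing on $F_{N,N-1}$; and in every other case $-N+2\le k\le -2$, where Theorem~\ref{thm:fsclusterons} already supplies the shadow classification and the theorem imposes no further restriction on $r$, so no ``dimension count'' is needed there. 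Two smaller points: the rightmost violinist of $F_{N,r}$ with leftmost violinist in room $k$ sits in room $k+2N-1$, not $k+2N-2$, so Proposition~\ref{prop:N-1leftorright} actually gives $k\le -1$ rather than $k\le 0$, which already reduces the problem to the two boundary cases above; and the sumtroid computation you start and abandon is indeed a dead end here, since the sumtroid is not invariant and tracking it per move does not localize the gap.
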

\begin{proof}
The proof is similar to that of Theorem~\ref{thm:fsclusterons}.

We first show that all claimed final states are achievable.

The edge cases, or obtaining a state with shadow $F_{N,1}$ with the leftmost violinist in room $-N+1$, and obtaining a state with shadow $F_{N,N-1}$ with the leftmost violinist in room $-1$, can be achieved by continually performing the leftmost possible move and rightmost possible move, respectively.

To show that all other claimed states are obtainable, we proceed by induction. The base cases of $N=1,2,3,4$ can be manually verified. For the inductive step, say that we have proven the result for $N$ and wish to prove it for a flat clusteron of size $N+1$. We provide a construction for general $F_{N+1,r}$, with the leftmost violinist in room $k$. 

Let us consider our operation as chip-pushing, which imposes an ordering on our violinists that is preserved across moves. In the case that $1\le r \le N-2$ and $-N+1\le k\le -3$, let us consider separately the leftmost violinist and the flat clusteron of size $N$ to its right. By our inductive assumption, we can turn the flat clusteron of size $N$ into a state with shadow $F_{N,r}$ with the leftmost violinist in room $k+2$. Performing these operations pushes the leftmost violinist into room $k+1$, upon which by Lemma~\ref{lem:nextToShadow}, the only possible attainable final state has a shadow of $F_{N+1,r}$ and the leftmost violinist in room $k$. 

In the case that $r=1$ and $k=-2$, we can perform moves on the sequence of rooms $(0,1)$, $(2,3)$, $\dots$, $(2N-8,2N-7)$, $(2N-5,2N-4)$, $(2N-7,2N-6)$, $\dots$, $(-1,0)$.

Notice that if it is possible to reach a state with shadow $F_{N+1,r}$ with the leftmost violinist in room $k$, then by performing ``inverse'' of each move (if a move on rooms $(x_i,x_i+1)$ was performed, then we instead perform a move on rooms $(N-x_i-1,N-x_i)$) we can reach a state with shadow $F_{N+1,N-r}$ with the leftmost violinist in room $-N-1-k$.

This proves that we can achieve cases with $-N+2\le k\le -2$ and $2\le r \le N-1$, as well as the case where $r=N-1$ and $k=-N+1$.

This shows that all final states with shadow $F_{N,r}$ for some $1 \le r \le N-1$ and with the leftmost violinist in room $k$ for some $-N+2\le k \le -2$ are attainable.

To show that no other final states are achievable, notice that if we do not continually perform the leftmost move, then the leftmost violinist must move to the left at least twice, once during the first move and a second time because another violinist will eventually be pushed into the room to his right. As a result, the leftmost violinist must end at the room with index $-2$ or below. We can similarly show that unless we continually perform the rightmost move, the rightmost violinist must end in a room with index $N+1$ or above. By Theorem~\ref{thm:fsclusterons}, we know that the span of any flat clusteron final state is $2N$, so the first violinist must be in some room with index between $-N+2$ and $-2$. But Theorem~\ref{thm:fsclusterons} also tells us that all flat clusteron final states have shadows in $\mathcal{F}_N$ for $N\ge 5$, which gives us the desired result.
\end{proof}

\section{Intermediate States}\label{sec:interstates}

We call an intermediate state \textit{a locked-in state} if all possible moves for the state and its future states do not change its centroid. Thus, such a locked-in state has a set final state. 

We call a state \textit{spacious}, if it does not contain any flat clusterons of size larger than $2$ and every pair of clusterons of size $2$ has at least one $2$-gap between them.

\begin{proposition}\label{prop:constantcentroid}
An intermediate state is locked-in if and only if it is spacious.
\end{proposition}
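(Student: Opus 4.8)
The plan is to route everything through one combinatorial invariant: whether three consecutive occupied rooms can ever appear. I would first record a dichotomy, call it Lemma~A: \emph{a state admits a centroid-changing move if and only if it contains a flat clusteron of size at least $3$.} By Lemma~\ref{lem:2clusteronconstantcentroid} a move changes the centroid exactly when its left and right neighborhoods $\ell,r$ differ; if $\ell\ge 2$ then room $i-1$ is occupied, so $i-1,i,i+1$ are three consecutive occupied rooms (symmetrically for $r\ge 2$), and conversely, if $i$ is the leftmost room of a flat clusteron of size $\ge 3$, the move on $(i,i+1)$ has $\ell=1<r$. Given Lemma~A, being locked-in is equivalent to: \emph{no state reachable from $S$ contains a flat clusteron of size $\ge 3$.}

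For the direction ``spacious $\Rightarrow$ locked-in'' I would prove that \emph{a move sends a spacious state to a spacious state} (Lemma~B); since a spacious state contains no flat clusteron of size $\ge 3$, Lemma~B plus induction on the number of moves shows every state reachable from a spacious $S$ is spacious, hence locked-in by Lemma~A. To prove Lemma~B, decompose the state into \emph{blocks}: maximal runs of clusterons joined by $1$-gaps, with consecutive blocks separated by gaps of size $\ge 2$. Spaciousness then says each block contains no size-$\ge 3$ clusteron and at most one size-$2$ clusteron. In a spacious state every move is on the unique size-$2$ clusteron $(i,i+1)$ of some block, and it sends its two violinists into the empty rooms $i-1$ and $i+2$. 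I would check, separately for $i-1$ and for $i+2$, the two possibilities: either the new occupant is adjacent to a size-$1$ clusteron of the block and merges with it into a size-$2$ clusteron; or it sits between two empty rooms and becomes a lone size-$1$ clusteron, which may in turn attach to a neighbouring block across a gap that has shrunk to size $1$. In every combination the old block either remains a legal block or splits, across the fresh $2$-gap at rooms $i,i+1$, into two blocks, and each resulting block still has no size-$\ge 3$ clusteron and at most one size-$2$ clusteron; all other blocks are untouched. Hence the new state is spacious.

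For ``not spacious $\Rightarrow$ not locked-in'' I would argue the contrapositive by exhibiting a path to a flat clusteron of size $\ge 3$, which is not locked-in by Lemma~A. If $S$ already contains such a clusteron we are done. Otherwise $S$ contains two size-$2$ clusterons with only $1$-gaps between them; pick such a pair $A=(a,a+1)$, $B$ with the fewest clusterons strictly in between, which are then $m$ size-$1$ clusterons for some $m\ge 0$, with all intervening gaps of size $1$. Induct on $m$: when $m=0$ the window reads $1\,1\,0\,1\,1$, and the move on $A$ sends its right violinist into the lone gap, producing three consecutive occupied rooms at $a+2,a+3,a+4$; when $m\ge 1$ the same move on $A$ places a violinist at $a+2$ that merges with the next size-$1$ clusteron into a size-$2$ clusteron $A'$, leaving $m-1$ size-$1$ clusterons between $A'$ and $B$, and we invoke the inductive hypothesis on the resulting state.

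The technical heart is Lemma~B: the case analysis of how a fired size-$2$ clusteron reshapes its block, especially the boundary case where a violinist lands in a gap of size exactly $2$ (so it attaches to an adjacent block) and the interior case where the block breaks in two at the new $2$-gap. Confirming that none of these ever creates a size-$\ge 3$ clusteron or a second size-$2$ clusteron inside some block is where the real work lies; once Lemma~A is in place, the remaining inductions are short.
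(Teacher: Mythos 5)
Your proposal is correct and takes essentially the same route as the paper: reduce being locked-in to the absence of three consecutive occupied rooms in any reachable state via Lemma~\ref{lem:2clusteronconstantcentroid}, show that a move on the $2$-clusteron of a spacious state preserves spaciousness by the same case analysis on whether the adjacent gaps have size $1$ or $\ge 2$, and, in the non-spacious case, fire one $2$-clusteron repeatedly toward another until a $3$-clusteron appears. Your explicit Lemma~A and block decomposition merely organize the paper's argument more systematically; the underlying ideas are identical.
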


\begin{proof}
First, we show that if the state is not spacious, it is not locked-in. Suppose the state contains a $c$-clusteron with $c > 2$. Consider the move that can be performed on the two leftmost violinists inside this clusteron. The move has unequal left and right neighborhoods, and by Lemma~\ref{lem:2clusteronconstantcentroid} the centroid changes. 

Now suppose that in our state, we have two pairs of two consecutive rooms separated by $m$ 1-gaps and no 2-gaps. If we make a move on one pair of these rooms, then in the next state, we will have two pairs of two consecutive rooms separated by $m-1$ 1-gaps. If we continue making moves, then at some point, we will have three consecutive rooms, implying our state was not locked-in.

Now we show that if a state is spacious, it is locked-in. We show it is impossible to go from a spacious to a non-spacious state. This implies the result, as by Lemma~\ref{lem:2clusteronconstantcentroid}, the centroid will not change when performing moves on pairs of violinists that are part of clusterons of size $2$. 

Suppose we start with a spacious state and perform a move on a 2-clusteron $C$. Without loss of generality, we look at what happens to the right of $C$. In the worst case, there is a 2-clusteron $C_1$ to the right with the 2-gap $g_1$ in between $C$ and $C_1$. Suppose $C$ is immediately followed by a 2-gap. Then the move creates a new 2-gap in place of $C$ and no new clusterons to the right of $C$. If $C$ is not followed by a 2-gap, then the move creates a new 2-clusteron to the right of $C$, which is surrounded by a new 2-gap in place of $C$ and the gap $g_1$ to the right.
\end{proof}

Given an intermediate state, the number of possible moves depends on the number of gaps. Namely, if the number of gaps is $g$, then the number of possible moves is $N-g-1$.

On the first turn, we have $n-1$ possible moves creating one 2-gap. On the second turn, we have $n-2$ possible moves. On the third turn, we have one 2-gap and one 1-gap generating $n-3$ possible moves. After that, we always have at least one 2-gap created from the previous move and at least one other gap created from the move before that. Therefore, we always have not more than $n-3$ possible moves.

\begin{proposition}
The number of gaps can change only in the following pattern:
\begin{itemize}
    \item If the chip-firing place is surrounded by two 2-gaps or one 2-gap and the border, the number of gaps increases by 1.
    \item If the chip-firing place is surrounded by one 2-gap or the border and by one 1-gap, the number of gaps does not change.
    \item If the chip-firing place is surrounded by two 1-gaps, the number of gaps decreases by 1.
\end{itemize}
The decrease in the number of gaps can only happen starting from move 3.
\end{proposition}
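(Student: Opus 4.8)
The plan is to follow precisely how a single move changes the list of maximal occupied runs together with the gaps separating them, and then to read the trichotomy off a three‑line count. I work with a propagated state $S$ of a flat clusteron (the setting of this section), and I begin by noting that every gap of such an $S$ has size $1$ or $2$, so that the three alternatives in the statement are exhaustive. Fix the move, performed on adjacent occupied rooms $(p,p+1)$, and let $R$ be the maximal run of consecutive occupied rooms containing them. Reading the move as chip‑pushing, it shifts the portion of $R$ in rooms $\le p$ one step left and the portion in rooms $\ge p+1$ one step right, so the \emph{only} rooms whose occupancy changes are: $p$ and $p+1$, which become empty; and, on each side of $R$, the single room just outside $R$, which becomes occupied --- unless $R$ abuts the ``border'' on that side (no further occupied room exists there), in which case nothing changes there apart from $R$ extending by one room. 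From this I extract the three facts I need: (i) rooms $p$ and $p+1$ become a brand new gap of size \emph{exactly} $2$ (its ends are the now‑occupied rooms $p-1$ and $p+2$), and this is the only new gap; (ii) on each side, the gap flanking $R$ loses its end room, so a flanking $1$-gap disappears (its two neighbouring runs merge), a flanking $2$-gap becomes a $1$-gap, and a flanking border remains a border; (iii) no gap other than the two flanking $R$ is affected. The degenerate configurations --- $R=\{p,p+1\}$ being a bare $2$-clusteron, or a flank of $R$ being the border --- are covered verbatim by this description.

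Next I would dispose of the ``all gaps have size $\le 2$'' remark by a one‑line induction on the move count: a flat clusteron has no gaps; the first move, by (i), creates a single gap of size $2$; and by (i)--(iii) no later move ever enlarges a gap or creates a gap of size $\ne 2$. With that, the three bullets are immediate. Let $g_L$ and $g_R$ be the entities flanking $R$; by (i)--(iii) the number of gaps changes by $1-a-b$, where $a=1$ if $g_L$ is a $1$-gap and $a=0$ otherwise, and likewise for $b$ and $g_R$. Two $2$-gaps, or a $2$-gap and the border, give $a=b=0$ and net $+1$; a $1$-gap paired with a $2$-gap or the border gives exactly one of $a,b$ equal to $1$ and net $0$; two $1$-gaps give $a=b=1$ and net $-1$. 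This is precisely the claimed trichotomy.

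For the timing claim I would argue thus. A decrease forces both flanks of $R$ to be $1$-gaps, so the current state must contain at least two $1$-gaps. But by (i) every move's unique new gap has size $2$, so a $1$-gap is never born outright; it can only appear by the shrinking of a $2$-gap. A flat clusteron has no gaps; after move $1$ there is exactly one gap and it has size $2$; moreover after move $1$ the state has exactly two maximal runs, each flanked by that lone $2$-gap on one side and by the border on the other, so move $2$ --- fired inside one of these runs --- shrinks the $2$-gap to a $1$-gap and, by (i), creates one new $2$-gap, leaving exactly one $1$-gap just before move $3$. Hence neither move $1$ nor move $2$ can decrease the number of gaps, so a decrease can occur only from move $3$ on, as claimed. (Pushing this bookkeeping one more step shows a decrease cannot in fact occur before move $4$, and the flat clusteron of size $5$ realizes one at move $4$; but only the stated weaker bound is needed.)

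The hard part is entirely the bookkeeping in the first paragraph --- nailing down that each move creates exactly one new gap, that it has size exactly $2$, that each flanking gap loses exactly one room while everything else is untouched, and that the two degenerate configurations obey the same rules. Once that is airtight, the trichotomy and the timing statement are both immediate counting arguments.
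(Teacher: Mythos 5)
Your proposal is correct and rests on the same mechanism the paper uses --- each move spawns exactly one new 2-gap at the firing site while eroding each flanking gap by one room --- though you spell out in full the local chip-pushing bookkeeping and the resulting trichotomy that the paper's one-sentence proof declares to need no argument. The timing claim is handled the same way in both: counting how many moves are needed to manufacture the gaps that a decrease requires (your explicit tracking of moves 1 and 2, like the paper's gap count, in fact yields the sharper conclusion that no decrease can occur before move 4).
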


\begin{proof}
    The only thing that requires a proof is the last sentence. We always have one 2-gap, and we also need two 1-gaps. Each of the gaps requires a move to create. 
\end{proof}

\begin{example}
Consider the 5-clusteron as a starting point. After the first move, we can have the shadow 1001111, which has one 2-gap. After the second move, we can have 10110011. Performing the rightmost move then gives us 101101001, upon which performing the only possible move decreases the number of gaps from 3 to 2.
\end{example}

We now show what happens when the shadows of two final states are placed next to each other. The resulting final shadow is uniquely determined, and the index of its 2-gap is the sum of the indices of the 2-gaps in the two shadows we start with.

\begin{proposition}\label{prop:mergeclusterons}
When a shadow $F_{N_1,x} \in \mathcal{F}_{N_1}$ is placed to the left and directly adjacent to a shadow $F_{N_2,y}\in \mathcal{F}_{N_2}$, their overall final state will have a shadow that can be represented by $F_{N_1+N_2,x+y}\in \mathcal{F}_{N_1+N_2}$.
\end{proposition}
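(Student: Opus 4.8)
The plan is to reduce this to Lemma~\ref{lem:nextToShadow} by an induction on $N_2$, peeling violinists off the right-hand shadow one at a time and feeding them, via chip-pushing, into the left-hand shadow. The key observation is that $F_{N_2,y}$ can itself be built up: by Lemma~\ref{lem:nextToShadow} (and its left-right mirror), placing a single violinist directly to the left of $F_{N_2-1,y-1}$ forces the final state $F_{N_2,y-1}$... but we want the reverse direction — we want to \emph{undo} the construction of $F_{N_2,y}$ so as to reveal its internal structure. So instead I would argue directly on the combined configuration.

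First I would set up coordinates: place $F_{N_1,x}$ so its occupied rooms lie in $\{0,1,\dots\}$ with span $2N_1$, and place $F_{N_2,y}$ immediately to its right, so its leftmost occupied room is at position $2N_1$ (the two shadows are ``directly adjacent,'' meaning exactly one empty room separates the last violinist of the first from the first violinist of the second — consistent with how clusterons concatenate under the bijection $B$). The combined state is then a specific state with $N_1+N_2$ violinists; note it is already a union of two final states joined by a $1$-gap, hence has exactly two $2$-gaps (at the $x$-th and $(N_1+y)$-th positions, counting globally) plus $N_1+N_2-3$ one-gaps. By Corollary~\ref{col:clusteron2gaps} any final state reached from here has exactly one $2$-gap, so some sequence of moves must eliminate one of the two $2$-gaps; by Lemma~\ref{lem:clusteron2gaps} the two $2$-gaps cannot coexist with no consecutive pair between them, and indeed here there \emph{is} a consecutive pair between them (the junction), so moves are available. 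Now I would track the sumtroid: by Lemma~\ref{lem:2clusteronconstantcentroid}, and since the configuration to the left and right of the junction region is, at the moment we make the crucial move, ``spacious'' on the outer parts, the only moves that change the sumtroid are those made at or near the junction. The cleanest route: show that from this combined state the final shadow is \emph{uniquely determined} — i.e. the combined state is rigid in the sense of Lemma~\ref{lem:nextToShadow} up to finitely many forced moves — and then compute the sumtroid of that unique final state.

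For the uniqueness and the index computation, I would induct on $N_1+N_2$. Pick the violinist at the junction: to the right of the first shadow's $2$-gap, the first shadow looks like a flat near-clusteron segment $\dots 1\,0\,1\,0\dots$ up to room $2N_1-2$, then a $1$-gap, then $F_{N_2,y}$. The only available moves are forced to march inward; applying Lemma~\ref{lem:nextToShadow} repeatedly (each step has a unique legal move because every move creates a fresh $2$-gap adjacent to the previous one, so by Lemma~\ref{lem:clusteron2gaps} no second move is available until the cascade resolves), the cascade fills the first shadow's $2$-gap and pushes its effect rightward, eventually reaching a state that is $F_{N_1,x}$ (shifted) sitting one room left of $F_{N_2-1,y-1}$ — or, handling the other orientation, $F_{N_1+1,x}$ adjacent to $F_{N_2-1,y}$ — and invoking the inductive hypothesis on a pair with smaller total size gives final shadow $F_{(N_1+1)+(N_2-1),\,x+(y-1)+1} = F_{N_1+N_2,\,x+y}$ (the $+1$ accounting for the absorbed violinist shifting the $2$-gap index by one as it crosses the junction, exactly as in Lemma~\ref{lem:nextToShadow}). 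Equivalently, and perhaps more transparently, I can skip the combinatorial cascade entirely: since the final shadow lies in $\mathcal{F}_{N_1+N_2}$ it is determined by its sumtroid (Proposition~\ref{prop:centroidEnumeration} / Corollary~\ref{col:cogcongruency}), so it suffices to (i) prove the final state is unique, which follows because at every stage exactly one move is legal, and (ii) compute the sumtroid of the combined starting state — by additivity of the sumtroid over the two blocks plus the translation offset, and using the formula $r + N - k/N$ from Proposition~\ref{prop:centroidEnumeration} — and match it against $F_{N_1+N_2,x+y}$.

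The main obstacle will be step (i): carefully verifying that the forced-move cascade really is forced, i.e. that starting from two adjacent final shadows there is, after each move, exactly one legal move available until termination. This needs Lemma~\ref{lem:clusteron2gaps} applied at each step (any newly created $2$-gap has no spare consecutive pair separating it from its neighbor $2$-gap, so it must be resolved before any other move is possible) together with a short case check that the cascade never ``splits'' into two independent choices — the kind of bookkeeping done in Lemma~\ref{lem:nextToShadow} and Lemma~\ref{lem:flatStateAchievability}, extended to the two-block setting. Once uniqueness is in hand, the identification of the index as $x+y$ is a one-line sumtroid computation via Proposition~\ref{prop:centroidEnumeration}.
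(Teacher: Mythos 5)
Your proposal contains the right germ of an idea (centroid invariance plus Corollary~\ref{col:cogcongruency}), which is in fact the paper's entire proof: the combined state is \emph{spacious} in the sense of Section~\ref{sec:interstates}, hence locked-in by Proposition~\ref{prop:constantcentroid}, so its centroid never changes; a direct computation shows the fractional part of that centroid is $-\frac{x+y}{N_1+N_2}$, and Corollary~\ref{col:cogcongruency} identifies the final shadow as $F_{N_1+N_2,x+y}$. However, as written your argument has two genuine problems. First, you are inconsistent about the junction: you place the leftmost room of $F_{N_2,y}$ at $2N_1$ (correct, making rooms $2N_1-1$ and $2N_1$ an adjacent occupied pair), but then twice describe the two shadows as ``joined by a $1$-gap'' with ``exactly one empty room'' between them. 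Under that reading the combined state has no adjacent violinists at all, is already final, and the proposition would be false; the whole point is that the junction is the unique $2$-clusteron that drives the dynamics.

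Second, your uniqueness step (i) rests on the claim that ``at every stage exactly one move is legal,'' and that is false: when the junction pair fires, the left violinist can land adjacent to the next violinist on the left at the same time as the right violinist lands adjacent to the next one on the right, producing two independent legal moves. The forced-move cascade you describe (and the induction built on it) therefore does not go through. Uniqueness of the final state is not obtained by counting legal moves but by the locked-in property: since the state is spacious and spaciousness is preserved (this is the content of the proof of Proposition~\ref{prop:constantcentroid}), every move has equal left and right neighborhoods, so by Lemma~\ref{lem:2clusteronconstantcentroid} the centroid is constant, and Proposition~\ref{prop:centroidEnumeration} then pins down the final state. If you replace your cascade and your step (i) with a single appeal to Proposition~\ref{prop:constantcentroid}, and fix the junction description, your sketch collapses to the paper's proof.
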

\begin{proof}
Say that the leftmost room of $F_{N_1,x}$ is at position $0$, and the leftmost room of $F_{N_2,y}$ is at $2N_1$.

We can see that this satisfies the conditions described in Proposition~\ref{prop:constantcentroid}. As a result, its centroid must always stay the same. Calculation gives that the centroid lies at
$$G=\frac{1}{N_1+N_2}\cdot((0+2+\dots+2(x-1)+(2x+1)+(2x+3)+\dots+(2N_1-1)+$$
$$(2N_1)+(2N_1+2)+\dots+2(N_1+y-1)+$$
$$(2N_1+2y+1)+(2N_1+2y+3)+\dots+(2N_1+2N_2-1))$$
$$=\frac{x(x-1)+(N_1-x)(N_1+x)+y(2N_1+y-1)+(N_2-y)(2N_1+N_2+y)}{N_1+N_2}$$
$$=(N_1+N_2)-\frac{x+y}{N_1+N_2}.$$
This has fractional part $-\frac{x+y}{N_1+N_2}$, and thus the final state will have a shadow representable by $F_{N_1+N_2,x+y}\in \mathcal{F}_{N_1+N_2}$, as desired.
\end{proof}

\section{Final State Probability} \label{sec:fsprobability}

Because we can characterize all final states of flat clusterons, it is natural to ask ourselves what happens when we perform random moves starting from a flat clusteron: at each state, we select a move to make uniformly and independently out of all possible moves. It turns out that we have an equal probability of ending with a final state with any shadow in $\FF_N$. This is surprising. 

\subsection{Conjecture}

We calculated probabilities for a lot of final states and noticed an amusing (amazing) pattern. The pattern was computationally checked for up to 13 violinists. This pattern is our next conjecture.

\begin{conjecture}\label{conj:flatclusteronprobabilities}
If we start from a flat clusteron, and at each state uniformly select a move to perform from all possible moves, then the probability of ending with a final state with final shadow $F_{N,r}$ equals $\frac{1}{N-1}$ for all $1\le r\le N-1$.
\end{conjecture}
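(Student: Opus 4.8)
The plan is to set up a recursion on $N$ that tracks the full probability distribution over final shadows $F_{N,r}$, using the chip-pushing picture and Lemma~\ref{lem:nextToShadow}. The key structural fact is that after the very first move on a flat clusteron of size $N$ (there are $N-1$ choices of first move, each equally likely), we split into a left group and a right group that can never interact again in a way that changes which final shadow results — more precisely, by Lemma~\ref{lem:nextToShadow} and Proposition~\ref{prop:mergeclusterons}, the final shadow of the whole state is $F_{N,x+y}$ where $F_{N_1,x}$ and $F_{N_2,y}$ are the final shadows of the two pieces, with $N_1 + N_2 = N$. So if $p_N(r)$ denotes the probability of ending in $F_{N,r}$, the conjecture $p_N(r) = \frac{1}{N-1}$ should follow once I understand how the two subprocesses evolve and how their moves interleave.

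The main subtlety — and I expect this to be the main obstacle — is that the first move does not simply produce two independent flat clusterons: it produces a configuration of the form ``violinist, $1$-gap, (flat clusteron of size $N_1-1$)'' on the left and symmetrically on the right, i.e. something like $F_{N_1,1}$-prefix next to $F_{N_2,1}$-suffix, but more importantly the subsequent moves of the left and right halves are \emph{interleaved} by the random process, so I must argue that the interleaving does not affect the distribution of the final shadow. This should be true because (i) the set of available moves at any state is the disjoint union of the moves available within the left half and within the right half (once the first move has separated them, no move ever straddles the boundary — this needs a short argument using that each half, by Theorem~\ref{thm:fsclusterons}, occupies span exactly $2N_i$ and the pieces start adjacent, plus Lemma~\ref{lem:clusteron2gaps}-type reasoning to rule out re-collision), and (ii) for a process that is a ``random interleaving'' of two independent processes, the joint distribution of the two outcomes equals the product of the marginals. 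Point (ii) is a general lemma: if at each step we pick uniformly among all currently-available moves across both halves, the outcome of each half has the same distribution as if that half were run in isolation with uniform choices, and the two outcomes are independent. I would isolate this as a clean combinatorial lemma and prove it by induction on the total number of remaining moves.

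Granting those two points, the recursion becomes: conditioning on the first move splitting $N$ into $(N_1, N_2)$ with $N_1 = j$, $N_2 = N-j$ for $1 \le j \le N-1$ (each with probability $\frac{1}{N-1}$), and letting the left half run to a final shadow $F_{j, x}$ and the right half to $F_{N-j, y}$ independently, the overall shadow is $F_{N, x+y}$ by Proposition~\ref{prop:mergeclusterons}. (Here one must check the small-case bookkeeping: a ``half'' of size $1$ contributes the degenerate shadow with a forced index, and a half of size $2$ is a flat near-clusteron — careful tracking via Theorem~\ref{thm:clusteronfstrans} handles these.) So, inductively assuming $p_j$ and $p_{N-j}$ are uniform on their ranges, $p_N(r) = \frac{1}{N-1}\sum_{j=1}^{N-1} \Pr[x + y = r]$ where $x$ is uniform on $\{0,\dots,j-1\}$-ish and $y$ uniform on its range. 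The final arithmetic step is to show $\sum_{j} \Pr_{j}[x+y=r]$ is independent of $r$ and equals $1$; this is a convolution-of-uniforms identity, essentially the statement that averaging the convolution of two uniform distributions whose sizes add to $N$, over all ways of choosing the split, yields the uniform distribution on $N-1$ points. I would verify this identity directly (it is the Eulerian-type recursion that, via Section~\ref{sec:permutations}, corresponds to the fact that the numbers $R(N,\ell,x)$ sum correctly), and then the base cases $N=1,2,3,4$ from Example~\ref{ex:fClusteron4} close the induction.

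The honest assessment of where this can go wrong: the cleanest-looking step, ``the two halves never re-collide and their move-sets stay disjoint,'' is where I would spend the most care, because the halves after the first move are not spacious and can internally create and destroy gaps (as in the intermediate-states section), so I cannot just cite Proposition~\ref{prop:constantcentroid}; I would instead argue via Proposition~\ref{prop:N-1leftorright} that each half's violinists stay within a window of width $2N_i$ forced apart by the initial $2$-gap created at the boundary on the first move, so the rightmost violinist of the left half and leftmost violinist of the right half are separated by a gap that the combined bound $b_{N_1} + b_{N_2}$-style counting (as in the proof of the termination theorem) never lets close. If that separation argument holds, the rest is the convolution identity plus induction, both routine.
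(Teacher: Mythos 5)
First, note that the paper does not prove this statement: it is stated as Conjecture~\ref{conj:flatclusteronprobabilities}, explicitly described as verified computationally up to $N=13$ but unproven, with only partial results (Lemma~\ref{lem:zeroState}, and a reduction to the also-unproven Conjecture~\ref{conj:probBijection} about recursive trees). So your proposal is attempting an open problem, and unfortunately it does not close it.

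The fatal step is your claim (i), that after the first move the left and right groups never again participate in a common move. This is false, and the paper's own Figure~\ref{fig:4exampletree} contains a counterexample. Starting from $0001111000$ and making the leftmost move produces $0010011100$: a lone violinist in room $2$ (the ``left piece'') and a flat clusteron in rooms $5$--$7$ (the ``right piece''), separated by a $2$-gap. Following the branch $0010011100 \to 0010110010 \to 0011001010$, rooms $2$ and $3$ become adjacent and occupied, where room $2$ holds the left-piece violinist and room $3$ holds a violinist originating from the right piece; the next move acts on this straddling pair. So the move sets of the two pieces do not stay disjoint, the pieces are not independent subprocesses, and your interleaving lemma (which would otherwise be fine) has nothing to apply to. Your proposed rescue via windows of width $2N_i$ also fails quantitatively: a left piece of size $j$ occupying rooms $0,\dots,j-1$ has final span $2j$ and by Theorem~\ref{thm:clusteronfstrans} its rightmost violinist can end as far right as room $2j-2$, while the right piece begins at room $j+2$; these regions overlap for $j\ge 4$, and as the example shows the collision already occurs for $j=1$. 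There is a secondary problem as well: Proposition~\ref{prop:mergeclusterons} applies to two \emph{final shadows} placed directly adjacent (a spacious, locked-in configuration), not to two flat clusterons separated by a $2$-gap, and the additive index bookkeeping fails a sanity check at $N=3$ --- the L branch deterministically yields $F_{3,2}$ and the R branch $F_{3,1}$, and no fixed index convention for a size-$1$ piece makes $x+y$ come out right in both cases. The non-confluence and long-range interaction between pieces is precisely why the authors could only conjecture the uniformity; a correct proof would have to handle the recollision of the two groups, which your argument assumes away.
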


Equivalently, the probability of ending with a final state that has a centroid with a fractional value $\frac{k}{N}$ (or sumtroid with fractional value $k$) is $\frac{1}{N-1}$, where $0 \leq k < N$, with one exception at either $k=0$ if $N$ is odd or $k=\frac{N}{2}$ if $N$ is even.

Although we have not found a proof for this conjecture, we present some partial results.

We define $P_{N,K}$ to be the probability of ending with a final state with sumtroid equal to $K$, when starting from a flat clusteron of size $N$, where we assume that each flat clusteron starts with sumtroid $0$. When $N$ is even, this means that the violinists will be in rooms with indices in $\ZZ+\frac{1}{2}$, but this has negligible effects on our problem.

For reference, we provide Table~\ref{tab:probRef1} with the values of $P_{N,K}$ for small $N$. Because of symmetry $P(N,K)=P(N,-K)$, we show only the non-positive half of the table.

\begin{table}[ht!]
    \centering
    \begin{tabular}{|c|c|c|c|c|c|c|c|c|c|c|c|}
        \hline
         $N\backslash K$ & $-10$ & $-9$ & $-8$ & $-7$ & $-6$ & $-5$ & $-4$ & $-3$ & $-2$ & $-1$ & 0 \\
        \hline
         3 & & & & & & & & & & $\frac{1}{2}$ & 0\\
         \hline
         4 & & & & & & & & $\frac{1}{6}$ & 0 & $\frac{1}{6}$ & $\frac{2}{6}$\\
         \hline
         5 & & & & & $\frac{1}{24}$ & 0 & $\frac{1}{24}$ & $\frac{2}{24}$ & $\frac{4}{24}$ & $\frac{4}{24}$ & 0\\
         \hline
         6 & $\frac{1}{120}$ & 0 & $\frac{1}{120}$ & $\frac{2}{120}$ & $\frac{4}{120}$ & $\frac{8}{120}$ & $\frac{11}{120}$ & 0 & $\frac{11}{120}$ & $\frac{14}{120}$ & $\frac{16}{120}$\\
         \hline
    \end{tabular}
    \caption{Probability of ending with a sumtroid of $K$ when starting from a flat clusteron of size $N$.}
    \label{tab:probRef1}
\end{table}

\begin{example}
Consider a flat clusteron of size $4$, as in Example~\ref{ex:fClusteron4}. Each leaf in the tree in Figure~\ref{fig:4exampletree} is achieved with the same probability. As a result, in Table~\ref{tab:4fs}, we can see that the probability both for ending with a sumtroid of $-1$ and for ending with a sumtroid of $3$ is $\frac{1}{6}$. Both of them correspond to the same final shadow, which is therefore achieved with probability $\frac{1}{6}+\frac{1}{6}=\frac{1}{3}$. A similar calculation can be performed for the other shadows. Note that although in this specific case the probability of achieving the sumtroid of each final state is proportional to the number of branches corresponding to that final state, this is not always the case. 
\end{example}

We construct Table~\ref{tab:probRef2}, where its entry in row $N$ and column $K$ equals the corresponding entry in Table~\ref{tab:probRef1} multiplied by $(N-1)!$.

\begin{table}[ht!]
    \centering
    \begin{tabular}{|c|c|c|c|c|c|c|c|c|c|c|c|}
        \hline
         $N\backslash K$ & $-10$ & $-9$ & $-8$ & $-7$ & $-6$ & $-5$ & $-4$ & $-3$ & $-2$ & $-1$ & 0 \\
        \hline
         3 & & & & & & & & & & $1$ & 0\\
         \hline
         4 & & & & & & & & $1$ & 0 & $1$ & $2$\\
         \hline
         5 & & & & & $1$ & 0 & $1$ & 2 & $4$ & $4$ & 0\\
         \hline
         6 & $1$ & 0 & $1$ & $2$ & $4$ & $8$ & $11$ & 0 & $11$ & $14$ & $16$\\
         \hline
    \end{tabular}
    \caption{Probability of ending with a sumtroid of $K$ when starting from a flat clusteron of size $N$ multiplied by $(N-1)!$.}
    \label{tab:probRef2}
\end{table}

In the next lemma, we describe zeros in Table~\ref{tab:probRef2}. In particular, we show that there are $N-2$ of them in a row corresponding to $N$ violinists. Let $M=\frac{N}{2}$ for even $N$ and $0$ for odd $N$.

\begin{lemma}\label{lem:zeroState}
The sumtroid $K$ of a final state achieves all integer values in the range $-\binom{N-1}{2} \linebreak[1] \le K\le \binom{N-1}{2}$, except for $K\equiv M \pmod{N}$, in which case $P(N,K)=0$.
\end{lemma}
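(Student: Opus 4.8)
The plan is to convert the classification of final states in Theorem~\ref{thm:clusteronfstrans} into an explicit formula for the sumtroid, after which the lemma reduces to an elementary statement about residues and about covering an interval by blocks of consecutive integers. Normalize coordinates so that the flat clusteron has its leftmost violinist in room $0$; then it has sumtroid $\binom{N}{2}$, so the normalized sumtroid $K$ that indexes $P(N,K)$ equals the room-$0$ sumtroid minus $\binom{N}{2}$. By Theorem~\ref{thm:fsclusterons} every final shadow is $F_{N,r}$ with $1\le r\le N-1$, and the centroid computation in the proof of Proposition~\ref{prop:centroidEnumeration} shows that a final state with shadow $F_{N,r}$ and leftmost violinist in room $k$ has room-$0$ sumtroid $Nk+N^2-r$, hence normalized sumtroid
\[
K=Nk+\frac{N(N+1)}{2}-r .
\]
By Theorem~\ref{thm:clusteronfstrans} the pairs $(k,r)$ that actually occur are exactly $(-N+1,1)$, $(-1,N-1)$, and all pairs with $-N+2\le k\le -2$ and $1\le r\le N-1$, and every such final state is reachable; hence $\{K : P(N,K)>0\}$ is precisely the image of this family under the displayed formula.

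Next I would record two congruence facts. A case split on the parity of $N$ gives $\frac{N(N+1)}{2}\equiv M\pmod N$, so $K\equiv M-r\pmod N$; since $r$ runs through $\{1,\dots,N-1\}$ it is never $0$ modulo $N$, so no final state has sumtroid $\equiv M\pmod N$, which proves $P(N,K)=0$ for all such $K$. Substituting $(k,r)=(-N+1,1)$ and simplifying shows the minimal sumtroid is $-\binom{N-1}{2}$, and by symmetry (or from $(-1,N-1)$) the maximal one is $\binom{N-1}{2}$; these endpoints are $\equiv M-1$ and $\equiv M+1\pmod N$ respectively, and in particular $\binom{N-1}{2}-1$ and its negative are both $\equiv M\pmod N$.

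It remains to show that every integer $K$ with $-\binom{N-1}{2}\le K\le\binom{N-1}{2}$ and $K\not\equiv M\pmod N$ is realized. For fixed $k$ in the middle range, the realized values form the block $B_k$ of $N-1$ consecutive integers from $Nk+\frac{N(N+1)}{2}-(N-1)$ to $Nk+\frac{N(N+1)}{2}-1$, none of them $\equiv M\pmod N$; moreover $B_k$ and $B_{k+1}$ are separated by exactly one integer, namely $Nk+\frac{N(N+1)}{2}\equiv M\pmod N$. Hence $\bigcup_{k=-N+2}^{-2}B_k$ is precisely the set of integers in $[-\binom{N-1}{2}+2,\ \binom{N-1}{2}-2]$ that are not $\equiv M\pmod N$. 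Adjoining the edge values $\pm\binom{N-1}{2}$ contributed by $(-N+1,1)$ and $(-1,N-1)$, and using $\pm(\binom{N-1}{2}-1)\equiv M\pmod N$ from the previous step, one gets that the realized sumtroids are exactly $\{K : -\binom{N-1}{2}\le K\le\binom{N-1}{2},\ K\not\equiv M\pmod N\}$; counting the complement inside the interval gives the $N-2$ forbidden values per row in Table~\ref{tab:probRef2}.

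All of this is routine once the formula for $K$ is in hand; the only delicate point is the bookkeeping at the two ends of the interval, i.e.\ checking that the two edge states of Theorem~\ref{thm:clusteronfstrans} fill exactly the positions $\pm\binom{N-1}{2}$ left uncovered by the outermost blocks $B_{-N+2}$ and $B_{-2}$, together with the sign and parity tracking of $M$, and I expect this boundary matching to be the main, if modest, obstacle. For very small $N$, where the family of middle blocks is empty or a singleton, the blocks degenerate and one checks the claim directly against Table~\ref{tab:probRef2}; for $N=1$ the statement is vacuous.
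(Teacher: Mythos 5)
Your proposal is correct, and it follows the paper's proof in its two main ingredients: both arguments take the classification of final states from Theorem~\ref{thm:clusteronfstrans} together with the sumtroid formula from Proposition~\ref{prop:centroidEnumeration}, and both establish the vanishing $P(N,K)=0$ for $K\equiv M\pmod N$ by exactly the same residue computation (your identity $\tfrac{N(N+1)}{2}\equiv M\pmod N$ is just the paper's parity case split packaged differently). The one place you genuinely diverge is the surjectivity step. The paper finishes by counting: the $(N-3)(N-1)+2=N^2-4N+5$ reachable final states have pairwise distinct sumtroids, all lying in the interval and all avoiding the forbidden residue class, and the interval contains exactly $N^2-4N+5$ allowed integers, so every allowed value is hit. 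You instead compute the image explicitly, showing that the blocks $B_k$ of $N-1$ consecutive integers tile the interval with single gaps exactly at the residues $\equiv M$, and that the two edge states fill the positions $\pm\binom{N-1}{2}$. The counting argument is shorter but leans on the unverified-in-text assertion that the interval contains exactly $N-2$ integers congruent to $M$; your covering argument establishes that count as a byproduct and makes the correspondence between states and sumtroid values completely explicit, at the cost of the boundary bookkeeping you flag. Both are sound; yours is somewhat more self-contained.
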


\begin{proof}
We know that the smallest sumtroid of a final state is achieved when we use the greedy algorithm of always performing the rightmost possible move. Thus, the smallest sumtroid is $-\binom{N-1}{2}$. By symmetry, the largest sumtroid is $\binom{N-1}{2}$.

We next show that it is impossible for there to exist a final state with sumtroid $K\equiv M \pmod{N}$, implying that $P(N,K)=0$ in these cases. Say that we end in a state with shadow $F_{N,k}$ for some $1\le k \le N-1$ with the leftmost violinist in room $r$. By Proposition~\ref{prop:centroidEnumeration}, we have
$$K=N\left(r+N-\frac{k}{N}\right)=N^2+Nr-k.$$
When $N$ is odd, then $r\in \ZZ$, and so $K\equiv -k\pmod{N}$. But by Theorem~\ref{thm:fsclusterons}, we know that $k$ can only take on values between $1$ and $N-1$ inclusive, so $K$ cannot be equivalent to $0$ modulo $N$.

When $N$ is even, then $r\in \ZZ+\frac{1}{2}$, so the term $Nr$ leaves a remainder of $\frac{N}{2}$ when divided by $N$. This tells us that $K\equiv \frac{N}{2}-k$, from which we similarly get that $K$ cannot be equivalent to $\frac{N}{2}$ modulo $N$.

We now show that $P(N,K)\neq 0$ when $-\frac{(N-1)(N-2)}{2}\le K\le \frac{(N-1)(N-2)}{2}$ and $K\not\equiv M \pmod{N}$. To prove this, we will show that there are $(N^2-3N+3)-(N-2)=N^2-4N+5$ values of $K$ that are achievable in the given range.

We once again know that $K=N^2+Nr-k$. Theorem~\ref{thm:clusteronfstrans} tells us that there exists exactly $(N-3)(N-1)+2=N^2-4N+5$ possible ending states, all of which have distinct sumtroids by Proposition~\ref{prop:centroidEnumeration}. As all of these sumtroids $K$ lie within the range $-\frac{(N-1)(N-2)}{2}\le K\le \frac{(N-1)(N-2)}{2}$, the result is implied.
\end{proof}

\subsection{Relation of probabilities to recursive trees}
\label{sec:recursivetrees}

Define a \textit{recursive tree} on $N$ vertices, labeled $0$ through $N-1$, to be a tree rooted at $0$ such that all paths starting from $0$ and ending at a leaf are strictly increasing. Such trees are also called \textit{increasing Cayley trees} \cite{Stanley}. Define the \textit{smallest rooted path} of a recursive tree to be the path starting at $0$ that always goes to the smallest child. 
We claim there exists a relationship between the number of recursive trees of size $N$ with a certain number of leaves and the smallest rooted path ending in a certain number and the probability of ending with a certain corresponding sumtroid value of a final state that has propagated from a flat clusteron of size $N$. It turns out that proving this implies Conjecture~\ref{conj:flatclusteronprobabilities}.

Let $R(N,\ell,x)$ be the number of recursive trees with $N$ vertices, $\ell$ vertices of degree $1$ (possibly including the root itself), and with the smallest rooted path ending in $x$.

\begin{conjecture}\label{conj:probBijection}
It is true that $R(N,\ell,x)=P(N,K)\cdot (N-1)!$ when 

$$\ell=\left\lfloor\frac{1}{N}\left(K+\frac{(N-2)(N-1)}{2}\right)\right\rfloor+2$$
\begin{align*}
    x = 
    \left\{
        \begin{array}{lr}
            \left(K+\frac{(N-2)(N-1)}{2}\right) \pmod{N}, & \text{if } K+\frac{(N-2)(N-1)}{2} \not\equiv 0 \pmod{N}\\
            1, & \text{if } K+\frac{(N-2)(N-1)}{2} \equiv 0 \pmod{N}
        \end{array}
    \right\}
\end{align*}
and $K\neq M \pmod{N}$.
\end{conjecture}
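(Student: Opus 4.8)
The plan is to prove the two displayed identities by strong induction on $N$, by showing that $(N-1)!\,P(N,K)$ obeys the same recursion as $R(N,\ell,x)$ after rewriting everything through the dictionary $K\leftrightarrow(\ell,x)$. The base cases are finite checks: they are already recorded for $N\le 6$ in Table~\ref{tab:probRef2}, and the dictionary matches there (for instance, for $N=4$ the four recursive trees with $2$ leaves and the two with $3$ leaves reproduce the entries $1,0,1,2$ of that row exactly). A successful induction then yields the conjecture, hence Conjecture~\ref{conj:flatclusteronprobabilities}.

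First I would make the recursion for $R(N,\ell,x)$ explicit (this is the ``recursive formula'' promised in Section~\ref{sec:recursivetrees}). In any recursive tree on $\{0,\dots,N-1\}$ the vertex $N-1$ is a leaf, deleting it gives a recursive tree on $\{0,\dots,N-2\}$, and conversely $N-1$ may be reattached to any of the $N-1$ vertices. Because $N-1$ is the largest label, it is never the smallest child of a vertex that already has a child, so the smallest rooted path is unchanged \emph{unless} $N-1$ is attached to its current endpoint $y$ (always a leaf), in which case the endpoint becomes $N-1$; and the leaf count is unchanged when $N-1$ is attached to a leaf and increases by one when it is attached to an internal vertex. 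Partitioning the $N-1$ possible parents into $\{y\}$, the other $\ell-1$ leaves, and the $N-1-\ell$ internal vertices gives
\[
R(N,\ell,x)=(\ell-1)\,R(N-1,\ell,x)+(N-\ell)\,R(N-1,\ell-1,x)\qquad(1\le x\le N-2),
\]
\[
R(N,\ell,N-1)=\sum_{y}R(N-1,\ell,y).
\]
Using the computation behind Lemma~\ref{lem:zeroState} (a final state with shadow $F_{N,k}$ and leftmost violinist in room $r$ has sumtroid $K=N^{2}+Nr-k$), one checks that the residue $x$ tracks the index $k$ of the $2$-gap while the quotient $\ell-2$ tracks how far the leftmost violinist has travelled, so the dictionary turns the two displays above into a three-term recursion for $(N-1)!\,P(N,K)$ in terms of the values $(N-2)!\,P(N-1,\cdot)$.

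The heart of the proof is to derive that recursion from the dynamics. The natural decomposition is to condition on the first move: with probability $\tfrac1{N-1}$ it acts on rooms $(i,i+1)$ for each $i=1,\dots,N-1$, it changes the sumtroid by $N-2i$, and it cuts the flat clusteron into flat sub-clusterons of sizes $i$ and $N-i$ separated by a $2$-gap. If these ran independently we would obtain the convolution recursion
\[
P(N,\cdot)=\frac1{N-1}\sum_{i=1}^{N-1}\bigl(\text{shift by }N-2i\bigr)\bigl(P(i,\cdot)*P(N-i,\cdot)\bigr),
\]
and the remaining task would be the (routine, if unpleasant) verification that this matches the $R$-recursion above.

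The main obstacle --- and the reason the statement is still only a conjecture --- is that these two sub-clusterons are \emph{not} independent: non-confluence allows a later move to act on one violinist from each side, and such crossing moves are in fact forced, since running the two sides in isolation would leave two $2$-gaps whereas every final state has exactly one (Corollary~\ref{col:clusteron2gaps}). So one must either (a) show, by a coupling or commutation argument, that the crossing moves do not alter the distribution of the final sumtroid, reducing to the independent case above; or (b) replace the first-move conditioning by one that the dictionary already points to --- conditioning on the final position of the leftmost violinist, which corresponds to $\ell$ --- or else single out in each maximal move-history a distinguished ``last'' move playing the role of vertex $N-1$, so that the three-term recursion appears directly. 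An attack along (b) amounts to building a weight-preserving bijection between recursive trees on $N$ vertices and appropriately weighted maximal move-histories of the $N$-violinist process (ideally after reorganizing the moves into $N-1$ genuine decision stages, which makes the factor $(N-1)!$ transparent). Constructing such a map, or proving (a), is the crux; once the process-side recursion is in hand, closing the induction against the $R$-recursion is bookkeeping.
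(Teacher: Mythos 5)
The statement you were given is stated in the paper as Conjecture~\ref{conj:probBijection} and is \emph{not} proved there: the authors say explicitly that they have only verified it computationally (up to $13$ violinists) and offer partial results around it (Theorem~\ref{thm:recursiontrees}, Lemma~\ref{lem:smallestPathEnd}, and the derivation of Conjectures~\ref{conj:flatclusteronprobabilities} and~\ref{conj:fsprobabilityrecur} from it). So there is no proof in the paper to compare yours against, and your proposal, by your own admission, is not a proof either: the central step --- establishing from the dynamics a recursion for $(N-1)!\,P(N,K)$ that matches the recursion for $R(N,\ell,x)$ --- is left open. You have correctly diagnosed why the obvious first-move decomposition fails: non-confluence means the two sub-clusterons created by the first move do not evolve independently (indeed they are forced to interact, since each in isolation would terminate with its own $2$-gap while Corollary~\ref{col:clusteron2gaps} forces exactly one in the final state), so the convolution identity you write down is unjustified, and neither of your alternatives (a) nor (b) is carried out. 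That unproved process-side recursion is precisely the content of the conjecture; everything else in your outline is bookkeeping around it.

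One genuinely correct and useful piece of your write-up is the tree-side recursion obtained by deleting the largest-labelled vertex $N-1$ (which is always a leaf, and is never the smallest child of a vertex that has another child): $R(N,\ell,x)=(\ell-1)R(N-1,\ell,x)+(N-\ell)R(N-1,\ell-1,x)$ for $1\le x\le N-2$, together with $R(N,\ell,N-1)=\sum_{y}R(N-1,\ell,y)$. This checks out against the $N=4$ and $N=5$ data in Tables~\ref{tab:probRef2} and~\ref{tab:bijectionN5(2)}, and it is a cleaner two-term recursion than the one the paper proves in Theorem~\ref{thm:recursiontrees} by inserting a vertex labelled $x$ and relabelling. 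But a correct recursion on the combinatorial side does not close the gap on the probabilistic side: the conjecture is exactly as open after your proposal as before it, and your text should not be presented as a proof.
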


Here $K+\frac{(N-2)(N-1)}{2}$ is a minimal shift of our sumtroids so that every centroid is non-negative, as this makes some math/groupings easier. With this shift, the smallest sumtroid is 0. 

We now interpret the values of $\ell$ and $x$ in terms of final states. The leftmost violinist moves $N-\ell$ rooms to the left from its original state to its final state, unless the sumtroid of the final state $K\equiv -\frac{(N-2)(N-1)}{2} \pmod{N}$, in which case the leftmost violinist moves $N-\ell+1$ rooms to the left. Say that our final state is represented by the shadow $F_{N,r}$. Then $x+r \equiv 2 \pmod{N-1}$. In particular, there is a bijection between $x$ and $r$.

We can also express $K$ as
$$K=-\frac{(N-2)(N-1)}{2}+(\ell-2)N+(x-1)+\min(x-1,1).$$
Equivalently, if $x > 1$, we have
\[K=-\frac{(N-2)(N-1)}{2}+(\ell-2)N+x,\]
and for $x = 1$, we have
\[K=-\frac{(N-2)(N-1)}{2}+(\ell-2)N.\]

Table~\ref{tab:bijectionN5(2)} shows recursive trees corresponding to different sumtroids for $N=5$. The first row of the table shows the sumtroids. The second row shows the values of $(\ell,x)$ for each sumtroid. The third row shows the recursive trees corresponding to each sumtroid.
For example, when $K=-1$, we can compute that the corresponding values for $\ell$ and $x$ are $\ell=3$ and $x=1$. There are $4$ recursive trees with $3$ vertices of degree $1$ and whose smallest rooted path ends in $1$. Correspondingly, $24P(5,-1) = 4$.

Note that a sumtroid uniquely defines the number of leaves and $x$. Keep in mind that the root itself can count as a leaf too. So in the long chain, which is the leftmost tree corresponding to sumtroid $-2$, both 0 and 4 count as a leaf.

\begin{table}[ht!]
\centering
\begin{tabular}{|c|c|c|c|c|}
\hline $-6$ & $-4$ & $-3$ & $-2$ & $-1$\\
\hline (2,1) & (2,2) & (2,3) & (2,4) & (3,1)\\
\hline
\centering
$\scalebox{0.75}{\begin{forest}
    [0
    [1][2[3[4]]]]]
\end{forest}}$ &
$\scalebox{0.75}{\begin{forest}
    [0[1[2]][3[4]]]
\end{forest}}$ &
$\scalebox{0.75}{\begin{forest}
    [0[1[2[3]]][4]]
\end{forest}
\begin{forest}
    [0[1[3]][2[4]]]
\end{forest}
}$ &
$\scalebox{0.75}{\begin{forest}
    [0[1[2[3[4]]]]]
\end{forest}
\begin{forest}
    [0[1[4]][2[3]]]
\end{forest}
\begin{forest}
    [0[1[2[4]]][3]]
\end{forest}
\begin{forest}
    [0[1[3[4]]][2]]
\end{forest}}$ &
$\scalebox{0.75}{\begin{forest}
    [0[1][2][3[4]]]
\end{forest}
\begin{forest}
    [0[1][2[4]][3]]
\end{forest}
\begin{forest}
    [0[1][2[3]][4]]
\end{forest}
\begin{forest}
    [0[1][2[3][4]]]
\end{forest}}$ \\ \hline
\end{tabular}

\label{tab:bijectionN5}
\end{table}

\begin{table}[ht!]
\centering
\begin{tabular}{|c|c|c|c|c|}
\hline $1$ & $2$ & $3$ & $4$ & $6$\\
\hline (3,2) & (3,3) & (3,4) & (4,1) & (4,2)\\
\hline
\centering
$\scalebox{0.6}{\begin{forest}
    [0[1[2]][3][4]]
\end{forest}
\begin{forest}
    [0[1[2][3]][4]]
\end{forest}
\begin{forest}
    [0[1[2][4]][3]]
\end{forest}
\begin{forest}
    [0[1[2][3[4]]]]
\end{forest}}$ &
$\scalebox{0.6}{\begin{forest}
    [0[1[3]][2][4]]
\end{forest}
\begin{forest}
    [0[1[3][4]][2]]
\end{forest}
\begin{forest}
    [0[1[2[3]][4]]]
\end{forest}
\begin{forest}
    [0[1[2[3][4]]]]
\end{forest}}$ &
$\scalebox{0.6}{\begin{forest}
    [0[1[4]][2][3]]
\end{forest}
\begin{forest}
    [0[1[2[4]][3]]]
\end{forest}}$ &
$\scalebox{0.6}{\begin{forest}
    [0[1][2][3][4]]
\end{forest}}$ &
$\scalebox{0.6}{\begin{forest}
    [0[1[2][3][4]]]
\end{forest}}$ \\ \hline
\end{tabular}
\caption{A table showing the relationship in Conjecture~\ref{conj:probBijection}. The number of trees in each column matches the corresponding entry in row $N=5$ of Table~\ref{tab:probRef2}.}
\label{tab:bijectionN5(2)}
\end{table}

We later have a theorem that describes the recursion that the values $R(N,\ell,i)$ follow. But before this, we prove a lemma.

Let $A(N,\ell,x)$ be the number of trees having $N$ vertices, $\ell$ leaves such that the root is not a leaf, and the smallest rooted path ending in $x$. Let $B(N,\ell,x)$ be the number of trees having $N$ vertices, $\ell$ leaves such that the root is a leaf, and the smallest rooted path ending in $x$. 

\begin{lemma}
    The numbers $A(N,\ell,x)$ and $B(N,\ell,x)$ satisfy the following equations:
\begin{equation}
\label{eq:complementarysubsets}
A(N,\ell,x) + B(N,\ell,x) = R(N,\ell,x).
\end{equation}
\begin{equation}
\label{eq:rootLeafBijection}
B(N,\ell,x) = A(N-1,\ell-1,x-1)+B(N-1,\ell,x-1).
\end{equation}
\begin{equation}
\label{eq:BSumtoA1}
A(N,\ell,1)=\sum_{i=2}^{N-1} B(N,\ell, i).
\end{equation}
In addition, for $N > 2$, we have $B(N,\ell, 1) = 0$.
\end{lemma}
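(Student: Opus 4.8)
The plan is to prove all four identities by direct combinatorial/bijective arguments, after recording three structural facts that will be used throughout. Fix a recursive tree $T$ on vertex set $\{0,1,\dots,N-1\}$ with $N\ge 2$. \textbf{(i)} Vertex $1$ is always a child of the root, since its parent has a strictly smaller label and hence must be $0$. \textbf{(ii)} If the root has exactly one child, that child is $1$, because every other vertex is then a proper descendant of that child and so carries a larger label. \textbf{(iii)} The smallest rooted path of $T$ is $0\to 1\to s_2\to s_3\to\cdots$, where $s_2$ is the smallest child of $1$, and so on; equivalently it is $0$ followed by the smallest rooted path of the subtree of $T$ rooted at $1$. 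With these in hand, Equation~\eqref{eq:complementarysubsets} is immediate: for $N\ge 2$ the root has degree $\ge 1$, so it is a leaf precisely when it has one child and a non-leaf precisely when it has $\ge 2$ children, and these two classes partition the trees counted by $R(N,\ell,x)$. Likewise $B(N,\ell,1)=0$ for $N>2$: a tree counted by $B(N,\ell,1)$ has root $0$ with unique child $1$, and its smallest rooted path ends at $1$, so $1$ is a leaf; then $\{0,1\}$ is the whole vertex set, forcing $N=2$.

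For Equation~\eqref{eq:rootLeafBijection}, let $T$ be counted by $B(N,\ell,x)$. By (ii) the root $0$ has unique child $1$; let $T'$ be the subtree rooted at $1$, a recursive tree on $\{1,\dots,N-1\}$, which after relabelling $k\mapsto k-1$ becomes a recursive tree on $\{0,\dots,N-2\}$ whose smallest rooted path ends at $x-1$ by (iii). Only the degree of vertex $1$ changes in passing from $T$ to $T'$ (it drops by one, losing the edge to $0$), while $0$ is a leaf of $T$. If $1$ has $\ge 2$ children in $T'$, it is a non-leaf in both trees, so $T'$ has $\ell-1$ leaves with its root a non-leaf, contributing to $A(N-1,\ell-1,x-1)$; if $1$ has exactly one child in $T'$, it is a leaf of $T'$ but a degree-$2$ non-leaf of $T$, so $T'$ has $\ell$ leaves (the leaf $0$ of $T$ being traded for the leaf $1$ of $T'$) with its root a leaf, contributing to $B(N-1,\ell,x-1)$. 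The construction is reversed by relabelling $\{0,\dots,N-2\}\to\{1,\dots,N-1\}$ and attaching a fresh root $0$ above vertex $1$; one checks this is mutually inverse with the forward map, which proves the identity.

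For Equation~\eqref{eq:BSumtoA1}, I will give a bijection between the trees counted by $A(N,\ell,1)$ and those counted by $\bigsqcup_{i=2}^{N-1}B(N,\ell,i)$. Let $T$ be counted by $A(N,\ell,1)$; then $0$ has children $1,c_2,\dots,c_k$ with $k\ge 2$, $2\le c_2<\cdots<c_k$, and vertex $1$ is a leaf (its subtree is trivial because the smallest rooted path, which begins $0\to 1$, ends at $1$). Detach $c_2,\dots,c_k$ with their subtrees from $0$ and reattach them as children of $1$; since each $c_j>1$, the result is again a recursive tree, now with $0$ having the single child $1$. In this new tree $0$ is a leaf, $1$ is a non-leaf, and all other degrees are unchanged, so the leaf count remains $\ell$; the new smallest rooted path is $0\to 1\to c_2\to\cdots$ and ends at a leaf $i$ with $2\le c_2\le i\le N-1$, so the image lies in $B(N,\ell,i)$. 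The inverse simply moves all children of $1$ back up to $0$ (using that the smallest rooted path ending at $i\ge 2$ forces $1$ to have a child), turning $1$ into a leaf and giving $0$ at least two children, hence a tree counted by $A(N,\ell,1)$; again this is the inverse of the forward map and preserves $\ell$. Summing over $i$ yields the identity.

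The routine part is the degree bookkeeping; the one genuinely delicate point, and the place to be careful, is the leaf count in Equations~\eqref{eq:rootLeafBijection} and~\eqref{eq:BSumtoA1}: whenever a former leaf (vertex $1$, respectively) is promoted to an interior vertex, a former interior vertex (vertex $0$) is simultaneously demoted to a leaf, so that $\ell$ is conserved exactly. The degenerate cases $N\le 2$ should be verified directly, since some of the trees invoked in the arguments above do not exist there.
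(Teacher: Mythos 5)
Your proposal is correct and follows essentially the same route as the paper: Eq.~(\ref{eq:complementarysubsets}) by partitioning on whether the root is a leaf, Eq.~(\ref{eq:rootLeafBijection}) by deleting/collapsing the root and relabelling (with the same two-case leaf count), Eq.~(\ref{eq:BSumtoA1}) by the re-parenting bijection that moves the root's other children onto vertex $1$ and back, and the vanishing of $B(N,\ell,1)$ from the observation that the smallest rooted path ending at $1$ forces $1$ to be a leaf. Your version is in fact stated slightly more carefully than the paper's (which loosely says ``all leaves in $C_0\setminus\{1\}$'' where all such children are meant), but the underlying argument is identical.
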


\begin{proof}
Eq.~(\ref{eq:complementarysubsets}) describes the fact that $A(N,\ell,x)$ and $B(N,\ell,x)$ count complementary subsets of the set of all recursive trees with given parameters.

Now we prove Eq.~(\ref{eq:rootLeafBijection}). Consider a tree with its root being a leaf. Then we can collapse the root and its child into one vertex. Depending on the number of neighbors that the original root's child had, the new root will either no longer be a leaf or still be a leaf. In the first case, both the number of vertices and the number of leaves decrease by $1$. In the second case, the number of vertices decreases by $1$, while the number of leaves remains unchanged. In both cases, all labels also need to decrease by $1$ (to make the tree $0$-rooted).

We prove Eq.~(\ref{eq:BSumtoA1}) by defining the corresponding bijection. Indeed, if we let $C_0$ be the set of all children of the root of a tree, then for every tree with its root not being a leaf and having the smallest rooted path ending in $1$, we can switch the parent of all leaves in $C_0 \setminus \{1\}$ to the leaf with label $1$ (while preserving all other connections) to create a unique tree with its root a leaf and $\ell$ total leaves (as $0$ is now a leaf, but $1$ is no longer a leaf).

And if we let $C_1$ be the set of all children with parent $1$, for every tree with its root a leaf and $\ell$ leaves, we can switch the parent of all leaves in $C_1$ to the root of the tree to create a unique tree with its root a leaf, $\ell$ total leaves (as $1$ is now a leaf, but $0$ is no longer a leaf), and the smallest rooted path ending in $1$.

The last equation follows from the fact that if the vertex labeled 1 is a leaf, the root must have other children and cannot be a leaf, provided that the total number of vertices is more than 2.
\end{proof}

We can get that $R(2,2,1)=1$. We are now ready for our theorem.
\begin{theorem} \label{thm:recursiontrees}
For $N > 2$ we have
\begin{equation}
\label{eq:Rrecurrence}
R(N,\ell,x)=\sum_{i=\max(x,2)}^{N-2} R(N-1,\ell-1,i) + \sum_{i=1}^{\max(x-1,1)} R(N-1,\ell,i).
\end{equation}
\end{theorem}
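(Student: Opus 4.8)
The plan is to derive (\ref{eq:Rrecurrence}) from the identities (\ref{eq:complementarysubsets})--(\ref{eq:BSumtoA1}) of the preceding Lemma, supplemented by one further identity about the numbers $A$ and $B$. Throughout I use the boundary conventions $A(M,m,i)=B(M,m,i)=R(M,m,i)=0$ for $i\le 0$ or $i\ge M$, and I will repeatedly use that $R=A+B$ and that $B(M,m,1)=0$, so that $A(M,m,1)=R(M,m,1)$ and, by (\ref{eq:BSumtoA1}), $\sum_i B(M,m,i)=A(M,m,1)=R(M,m,1)$.

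First I would settle the case $x=1$, where $\max(x,2)=2$ and $\max(x-1,1)=1$, so that (\ref{eq:Rrecurrence}) reads $R(N,\ell,1)=\sum_{i=2}^{N-2}R(N-1,\ell-1,i)+R(N-1,\ell,1)$. By the remark above, $R(N,\ell,1)=\sum_{i=1}^{N-1}B(N,\ell,i)$; summing (\ref{eq:rootLeafBijection}) over $i=1,\dots,N-1$ and reindexing gives
\[
\sum_{i=1}^{N-1}B(N,\ell,i)=\sum_{i=1}^{N-2}A(N-1,\ell-1,i)+\sum_{i=1}^{N-2}B(N-1,\ell,i),
\]
and then, at level $N-1$, rewriting $\sum_i B(N-1,\ell,i)=R(N-1,\ell,1)$ and $\sum_i A(N-1,\ell-1,i)=\sum_i R(N-1,\ell-1,i)-R(N-1,\ell-1,1)=\sum_{i\ge 2}R(N-1,\ell-1,i)$ yields the claim. (In passing this also shows $R(N,\ell,1)=R(N,\ell,2)$, which is the $x=2$ case.)

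For $x\ge 2$ the extra input is the generalization of (\ref{eq:BSumtoA1})
\[
A(M,m,y)=\sum_{i=y+1}^{M-1}B(M,m,i)+\sum_{i=1}^{y-1}B(M,m+1,i)\qquad(1\le y\le M-1),
\]
which reduces to (\ref{eq:BSumtoA1}) when $y=1$. I would prove it for $y\ge 2$ by a bijection on recursive trees refining the one behind (\ref{eq:BSumtoA1}): starting from a recursive tree with non-leaf root, $m$ leaves, and smallest rooted path ending in $y$, one reattaches the non-root children of the root below the vertex labelled $1$, splitting into two cases according to how the second-smallest child of the root is interleaved, in the label order, with the part of the smallest rooted path beyond the vertex $1$; one case outputs a $B$-tree with $m$ leaves and terminal vertex exceeding $y$, the other a $B$-tree with $m+1$ leaves and terminal vertex below $y$. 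Granting this, (\ref{eq:Rrecurrence}) for $x\ge 2$ is a direct computation: by (\ref{eq:complementarysubsets}) and (\ref{eq:rootLeafBijection}),
\[
R(N,\ell,x)=A(N,\ell,x)+A(N-1,\ell-1,x-1)+B(N-1,\ell,x-1);
\]
substitute the above generalization of (\ref{eq:BSumtoA1}) for $A(N,\ell,x)$ (parameters $(\ell,x)$), expand the resulting level-$N$ $B$-terms through (\ref{eq:rootLeafBijection}) and shift indices, absorb the stray terms $A(N-1,\ell-1,x-1)$ and $B(N-1,\ell,x-1)$ into the emerging sums, and then apply the same generalization once more at level $N-1$ (with parameters $(\ell-1,x-1)$ and $(\ell,x-1)$), after which $R=A+B$ collapses everything to $\sum_{i=x}^{N-2}R(N-1,\ell-1,i)+\sum_{i=1}^{x-1}R(N-1,\ell,i)$. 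This is (\ref{eq:Rrecurrence}) for $2\le x\le N-2$, and the same computation, now with the first sum empty, handles $x=N-1$; the base cases $N\le 3$ are immediate from $R(2,2,1)=1$.

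The genuinely hard step is the displayed generalization of (\ref{eq:BSumtoA1}). The deletion/merge map that proves (\ref{eq:BSumtoA1}) itself is injective only because there the vertex $1$ is a leaf and so has no subtree of its own; for $y\ge 2$ that vertex already carries a subtree, and one must show that the relative order of the root's second child against the smallest rooted path partitions the possible outputs exactly into the two $B$-tree families on the right-hand side, with no overlap and nothing missed. Everything downstream of that identity --- summing (\ref{eq:rootLeafBijection}), invoking $B(M,m,1)=0$, and chasing the index ranges, including the $\max$'s via the separate $x=1$ and $x=N-1$ cases --- is routine bookkeeping.
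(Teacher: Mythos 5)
Your overall architecture is coherent, and your $x=1$ case is a correct (and in fact cleaner) derivation than the paper's: combining $B(N,\ell,1)=0$ with Eqs.~(\ref{eq:complementarysubsets})--(\ref{eq:BSumtoA1}) does give $R(N,\ell,1)=\sum_i B(N,\ell,i)$, and summing Eq.~(\ref{eq:rootLeafBijection}) then yields the $x=1$ instance of Eq.~(\ref{eq:Rrecurrence}). I have also checked that your proposed generalization $A(M,m,y)=\sum_{i=y+1}^{M-1}B(M,m,i)+\sum_{i=1}^{y-1}B(M,m+1,i)$ is numerically true for $M=4,5$ and that, granted it, your index-chasing really does collapse to Eq.~(\ref{eq:Rrecurrence}) for $2\le x\le N-1$. (Your parenthetical that the $x=1$ computation ``also shows'' the $x=2$ case is a non sequitur --- it only shows the two right-hand sides agree, not that $R(N,\ell,2)$ equals them --- but this is harmless since your general $x\ge2$ argument covers $x=2$.)

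The genuine gap is exactly where you flag it: the generalized identity is the load-bearing step for every $x\ge2$ and is not proved, and the bijection you sketch does not work as described. If $y\ge2$, the vertex labelled $1$ is already internal (the smallest rooted path continues past it), so reattaching the root's other children below vertex $1$ destroys no leaf while turning the root into a leaf; the output therefore always has $m+1$ leaves, and no case split on ``how the second child interleaves with the path'' can produce the family $\sum_{i>y}B(M,m,i)$ of $B$-trees with only $m$ leaves. Producing that family requires destroying a leaf, e.g.\ grafting onto the terminal vertex $y$ of the smallest path, but that is only label-legal for subtrees rooted at children exceeding $y$, so a genuinely new construction and a full injectivity/surjectivity argument are needed. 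By contrast, the paper proves the $x\ge2$ case directly in a few lines --- insert the vertex labelled $x$ into a relabelled $(N-1)$-vertex tree, attaching it to the endpoint $i$ of the smallest rooted path if $i<x$ and to the earliest ancestor of $i$ below $x$ if $i>x$ --- and only needs the $A/B$ machinery for $x=1$. Your route inverts this trade-off; until the displayed identity is actually proved, the proposal does not establish the theorem for any $x\ge2$.
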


\begin{proof}
The proof is by induction. We can construct a recursive tree with $N$ vertices, $\ell$ vertices of degree $1$, and the smallest rooted path ending in $x$ by adding a vertex to a recursive tree with $N-1$ vertices.

We consider a recursive tree with $N-1$ vertices and relabel its vertices with the numbers $0,1,\dots,x-1,x+1,\dots, N-1$ while keeping relative order so that we exclude vertex $x$.

Suppose $x > 1$, and our tree with $N-1$ vertices has the smallest rooted path ending in $i < x$. Then we can attach a vertex with label $x$ to vertex $i$. In this case, the number of leaves does not change.

If $i > x$, we can attach a vertex with label $x$ to the earliest ancestor of vertex $i$, which is less than $x$. If this earliest ancestor is zero, then, given that $x > 1$, zero cannot be a leaf. Thus, we add one leaf without destroying any existing leaves.

For every $i$, there are 
$R(N,\ell-1,i)$
recursive trees of the first type and $R(N,\ell,i)$ of the second type. This gives for $x > 1$ the following formula, matching what we want to prove,
$$R(N,\ell,x)=\sum_{i=x}^{N-2} R(N-1,\ell-1,i) + \sum_{i=1}^{x-1} R(N-1,\ell,i).$$

If $x$ is 1, we have to attach $x$ to the root. Suppose $T$ is the tree to which we attach $x$ after relabeling its vertices. If the root in $T$ is not a leaf, we add a new leaf. If the root is a leaf, then we do not change the number of leaves. We get
\begin{equation}
\label{eq:RtoA,B}
R(N,\ell,1) = \sum_{i=1}^{N-2} A(N-1,\ell-1,i)+\sum_{i=1}^{N-2} B(N-1,\ell,i).
\end{equation}
Noting that for $N > 2$, $B(N,\ell, 1) = 0$, we have
\[R(N,\ell,1) = \sum_{i=1}^{N-2} A(N-1,\ell-1,i)+\sum_{i=2}^{N-2} B(N-1,\ell,i).\]
Using Eq.~(\ref{eq:rootLeafBijection}) we get
\begin{multline*}
   R(N,\ell,1) = \sum_{i=1}^{N-2} A(N-1,\ell-1,i)+\sum_{i=2}^{N-2} \left(A(N-2,\ell-1,i-1)+B(N-2,\ell,i-1)\right)\\
   =\sum_{i=1}^{N-2} A(N-1,\ell-1,i)+\sum_{i=1}^{N-3} A(N-2,\ell-1,i)+ \sum_{i=1}^{N-3}B(N-2,\ell,i) \\
   = A(N-1,\ell-1,1)+\sum_{i=2}^{N-2} A(N-1,\ell-1,i)+\sum_{i=1}^{N-3} A(N-2,\ell-1,i)+A(N-2,\ell,1). 
\end{multline*}
Applying Eq.~(\ref{eq:BSumtoA1}) to the first and the last term and then use Eq.~(\ref{eq:complementarysubsets})
to combine the first two sums
\begin{multline*}
R(N,\ell,1) = \sum_{i=2}^{N-2}B(N-1,\ell-1,i)+\sum_{i=2}^{N-2} A(N-1,\ell-1,i)+\\
\sum_{i=1}^{N-3} \left(A(N-2,\ell-1,i)+B(N-2,\ell,i)\right) = \\
\sum_{i=2}^{N-2}R(N-1,\ell-1,i)+
\sum_{i=1}^{N-3} \left(A(N-2,\ell-1,i)+B(N-2,\ell,i)\right).
\end{multline*}
and after applying Eq.~(\ref{eq:RtoA,B}) to the last summation, we get
\[R(N,\ell,1) = R(N-1,\ell,1)+\sum_{i=2}^{N-2} R(N-1,\ell, i),\]
as desired.

Combining the results for $x>1$ and $x=1$, we get
$$R(N,\ell,x)=\sum_{i=\max(x,2)}^{N-2} R(N-1,\ell-1,i) + \sum_{i=1}^{\max(x-1,1)} R(N-1,\ell,i).$$
\end{proof}

This gives us many details about the behavior of $P(N,K)$. A corresponding recursion seems to hold for the values in Table~\ref{tab:probRef2}, as shown below, in which each element in the $(N+1)$-st row is generated by summing $N$ consecutive elements from the row above it, in a fashion similar to that of a sliding window, after which zeroes are added in accordance to Lemma~\ref{lem:zeroState}.

\begin{conjecture} \label{conj:fsprobabilityrecur}
The recurrence Eq.~(\ref{eq:Rrecurrence}) can be described by the following recurrence relation for $K\not\equiv M \pmod{N}$:
$$P(N,K)=\sum_{i=K-\frac{N-1}{2}-A}^{K+\frac{N-1}{2}-A-1} P(N-1,i)$$
where $A=\left\lfloor\frac{1}{N}(K+M)\right\rfloor-\frac{1+(-1)^{N}}{4}$.
\end{conjecture}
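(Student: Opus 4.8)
Here is a plan for establishing Conjecture~\ref{conj:fsprobabilityrecur}.

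\textbf{Overall strategy.} The asserted recurrence is, up to a change of variables, nothing more than the tree recursion of Theorem~\ref{thm:recursiontrees} (Eq.~(\ref{eq:Rrecurrence})) read through the dictionary of Conjecture~\ref{conj:probBijection}. Accordingly, the plan is to give a \emph{conditional} proof: assume Conjecture~\ref{conj:probBijection}, so that $R(N,\ell,x)=(N-1)!\,P(N,K)$ under the stated correspondence between $(\ell,x)$ and $K$, and reduce the whole statement to a bookkeeping identity between the index set appearing in Eq.~(\ref{eq:Rrecurrence}) and an interval of sumtroid values for $N-1$ violinists. (Since $R(N,\ell,x)=(N-1)!\,P(N,K)$ and $R(N-1,\cdot,\cdot)=(N-2)!\,P(N-1,\cdot)$, the clean statement is really the one for the normalized quantities $Q(N,K):=(N-1)!\,P(N,K)$ of Table~\ref{tab:probRef2}, i.e. $Q(N,K)=\sum Q(N-1,i)$; equivalently one inserts a factor $\tfrac1{N-1}$ on the right. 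I would first state it in that normalized form.)

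\textbf{Main computational steps.} Fix $N>2$ and $K$ with $K\not\equiv M\pmod N$, and let $(\ell,x)$ be the pair attached to $(N,K)$ by Conjecture~\ref{conj:probBijection}. First, split the right side of Eq.~(\ref{eq:Rrecurrence}) into its two blocks, the ``$\ell-1$'' block $\sum_{i=\max(x,2)}^{N-2}R(N-1,\ell-1,i)$ and the ``$\ell$'' block $\sum_{i=1}^{\max(x-1,1)}R(N-1,\ell,i)$. Second, apply the $(N-1)$-dictionary to each summand: using $K'=-\tfrac{(N-3)(N-2)}2+(\ell'-2)(N-1)+(x'-1)+\min(x'-1,1)$, compute, for each block, the set of $(N-1)$-sumtroids $K'$ produced. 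For $x\ge 2$ one finds that, measured as offsets from $-\tfrac{(N-3)(N-2)}2+(\ell-3)(N-1)$, the first block contributes offsets $x,x+1,\dots,N-2$, the second contributes offsets $N-1,N+1,\dots,N+x-2$, and the union is the full interval $x,x+1,\dots,N+x-2$ with the single offset $N$ missing. Third, observe that this missing value is exactly the Lemma~\ref{lem:zeroState} hole for $N-1$ (one checks $\tfrac{(N-3)(N-2)}2\equiv 1\pmod{N-1}$ for $N$ even and $\equiv\tfrac{N+1}2\pmod{N-1}$ for $N$ odd, so the absent value is $\equiv M'\pmod{N-1}$, where $M'$ is the parameter of that lemma for $N-1$); hence $P(N-1,\cdot)=0$ there and the hole may be reinserted harmlessly, turning the block decomposition into a sum of $Q(N-1,\cdot)$ over a genuine window of $N-1$ consecutive integers.

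\textbf{Matching the window to the stated bounds.} Since the window has length $N-1$ automatically, it remains to identify its left endpoint. A short computation gives, for $x\ge 2$, that the (unshifted) left endpoint equals $K-1-\big\lfloor\tfrac1N\!\big(K+\tfrac{(N-2)(N-1)}2\big)\big\rfloor$, so the claim reduces to the arithmetic identity
\[
\Big\lfloor\tfrac1N\big(K+\tfrac{(N-2)(N-1)}2\big)\Big\rfloor=\Big\lfloor\tfrac1N(K+M)\Big\rfloor+\tfrac{N-3}2-\tfrac{1+(-1)^N}4 .
\]
I would prove this by splitting on the parity of $N$ and using $\tfrac{(N-2)(N-1)}2\equiv M+1\pmod N$ with quotient $\tfrac{N-4}2$ for $N$ even (and $\tfrac{(N-2)(N-1)}2\equiv 1\pmod N$ with quotient $\tfrac{N-3}2$ for $N$ odd): the identity then holds exactly when $N\nmid K+M+1$, and the excluded residue $K\equiv -M-1\pmod N$ is precisely the $x=1$ branch of the dictionary. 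That branch must be — and can be — handled separately by the same method: for $x=1$ the ``$\ell$'' block degenerates to the single term $i=1$, the real window of offsets is $2,\dots,N-1$ of length $N-2$, and it is completed to length $N-1$ by padding with the guaranteed-zero value at offset $1$ (the $(N-1)$-block's own Lemma~\ref{lem:zeroState} hole), which pins the left endpoint to $K-\ell+2$ and matches $K-\tfrac{N-1}2-A$ after using $\ell-2=\tfrac1N\!\big(K+\tfrac{(N-2)(N-1)}2\big)$. One should also cross-check the two extreme rows $K=\pm\binom{N-1}2$ against Theorem~\ref{thm:clusteronfstrans}.

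\textbf{The main obstacle.} All of the above is conditional on Conjecture~\ref{conj:probBijection}; removing that hypothesis is the real difficulty. An unconditional proof would require a self-contained probabilistic recursion expressing $P(N,K)$ in terms of the $P(N-1,\cdot)$ — morally, a proof that the ``delete the leftmost violinist'' reduction used in Theorems~\ref{thm:fsclusterons} and~\ref{thm:clusteronfstrans} is compatible, in distribution, with the uniform-random choice of move. Because the process is not confluent and the set of moves available to the $N$-clusteron is \emph{not} the disjoint union of the moves of a smaller clusteron plus a boundary move, I expect this step to need a genuinely new idea, such as a coupling or a reweighting of move-sequences realizing the $(N-1)!$ normalization; it is the part I would not expect to be routine.
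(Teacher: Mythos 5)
Your proposal takes essentially the same route as the paper: the statement is left as a conjecture there, and the only argument the paper gives is the one-line remark that it ``follows immediately from Theorem~\ref{thm:recursiontrees} and applying the bijection from Conjecture~\ref{conj:probBijection}'' --- exactly the conditional reduction you carry out, with the index bookkeeping (the window of $N-1$ consecutive sumtroids, the reinserted Lemma~\ref{lem:zeroState} zero, and the separate $x=1$ branch) spelled out in more detail than the paper provides. Your closing caveat matches the paper's own position: absent a proof of Conjecture~\ref{conj:probBijection}, the statement remains open.
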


See Figure~\ref{fig:slidingWindowBiject} for an example of how one can use the values in row $N=4$ of the table to generate values in row $N=5$ and for an example of how one can use the values in row $N=5$ of the table to generate values in row $N=6$. An example of how an entry is generated is marked in a different color and underlined in both cases.

Summing from $K-\frac{N-1}{2}$ to $K+\frac{N+1}{2}-1$ in Conjecture~\ref{conj:fsprobabilityrecur} represents us adding $N-1$ values from the $(N-1)$-st row to obtain a value in the $N$-th row of the table, while the value of $A$ that is subtracted from both the top and bottom of the summation fixes the indexing, as it corresponds to inserting zeroes.

\begin{figure}[ht!]
\centering
\begin{tikzpicture}[node distance=2cm]

\tikzstyle{startstop} = [rectangle, rounded corners, minimum width=3cm, minimum height=1cm,text centered, draw=black]
\tikzstyle{io} = [trapezium, trapezium left angle=70, trapezium right angle=110, minimum width=3cm, minimum height=1cm, text centered, draw=black, fill=blue!30]
\tikzstyle{arrow} = [thick,->,>=stealth]
\tikzstyle{line} = [draw, -latex']

\node (start) [startstop] {1 \color{red} \underline{\textbf{0 1 2 1}} \color{black} 0 1};
\node (in1) [below of=start] {1 1 2 4 \color{red} \underline{\textbf{4}} \color{black} 4 4 2 1 1};
\node (stop) [startstop, below of=in1] {1 0 1 2 4 \color{red} \underline{\textbf{4}} \color{black} 0 4 4 2 1 0 1};

\node (start2) [startstop, right of=start, xshift=6cm] {1 0 \color{blue} \underline{\textbf{1 2 4 4 0}} \color{black} 4 4 2 1 0 1};
\node (in2) [right of=in1, xshift=6cm] {1 1 2 4 8 11 \color{blue} \underline{\textbf{11}} \color{black} 14 16 14 11 11 8 4 2 1 1};
\node (stop2) [startstop, right of=stop, xshift=6cm] {1 0 1 2 4 8 11 0 \color{blue} \underline{\textbf{11}} \color{black} 14 16 14 11 0 11 8 4 2 1 0 1};

\path [line] (start) -- node [text width=6cm,midway ] {\color{red}\textbf{0+1+2+1=4}\color{black}} (in1);
\path [line] (in1) -- node [text width=5cm,midway ] {Insert zeroes} (stop);

\path [line] (start2) -- node [text width=7.5cm,midway ] {\color{blue}\textbf{1+2+4+4+0=11}\color{black}} (in2);
\path [line] (in2) -- node [text width=5cm,midway ] {Insert zeroes} (stop2);
\end{tikzpicture}
\caption{The recursion used to generate the rows of Table~\ref{tab:probRef2}.}
\label{fig:slidingWindowBiject}
\end{figure}
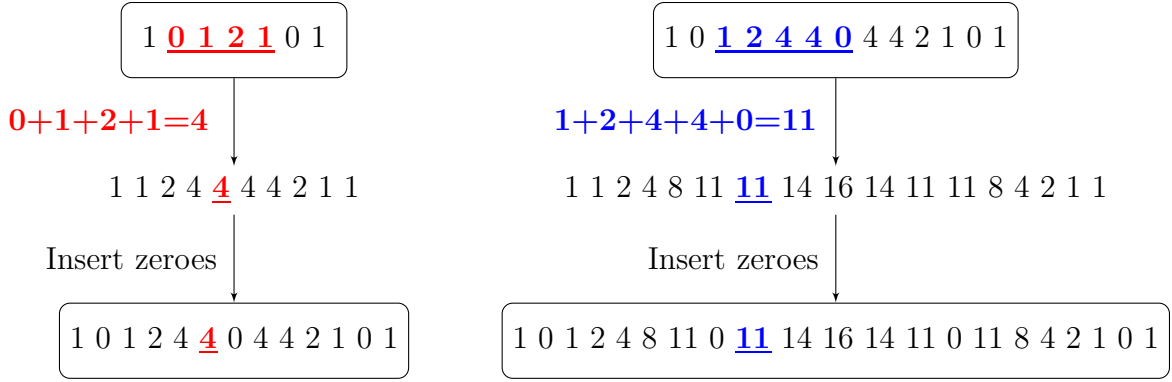

And if we define
\begin{equation}
\label{eq:TNell}
  T(N,\ell)=\sum_{i=0}^{N-1} R(N,\ell,i),  
\end{equation}
then $T(N,\ell)$ is the number of recursive trees with $N$ vertices and $\ell$ leaves. It is known that $T(N-1,\ell-2)$ equals twice the sequence of $2$-Eulerian numbers, which is sequence A120434 in the OEIS \cite{OEIS}. The 2-Eulerian numbers themselves are sequence A144696. Importantly, there exists an explicit formula for $T(N,\ell)$, see \cite{Conger},
\[T(N,\ell) = \sum_{j=0}^{\ell-2} (-1)^j\cdot (\ell -1 - j)\cdot \binom{N}{j}\cdot (\ell - j)^N.\]

In terms of violinists, $T(N,\ell)$ represents $(N-1)!$ times the sum of the probabilities of achieving the final states whose corresponding recursive trees have $\ell$ leaves.

We can also express the sum $T(N,\ell)$ in terms of centroids:
$$T(N,\ell)=\sum_{i=-(N-1)}^{N-1} (N-1-|i|)\cdot P\left(N-1,-\frac{(N-2)(N-3)}{2}+(N-1)(N-1-\ell)+i\right).$$

\begin{example}
Suppose that $N=5$, and we wish to compute $T(N,\ell)$ over all possible values of $\ell$. Then the $\ell=4$ case corresponds to sumtroids $-6$, $-4$, $-3$, and $-2$, the $\ell=3$ case corresponds to sumtroids $-1$, 1, 2, and 3, and the $\ell=2$ case corresponds to sumtroids 4 and 6. The summation gives values of 8, 14, and 2, respectively.
\end{example}

To show how our three conjectures are connected, we need the following lemma.

\begin{lemma}\label{lem:smallestPathEnd}
    The number of recursive trees with $N>1$ vertices and the smallest rooted path ending in $x$ is $(N-2)!$ for any $x$ between $1$ and $N-1$, inclusive.
\end{lemma}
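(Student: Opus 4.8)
The plan is to prove the claim by a direct double-counting / bijection argument, showing that the number of recursive trees with $N$ vertices whose smallest rooted path ends at a fixed value $x$ is independent of $x$ (for $1 \le x \le N-1$) and hence equal to $\frac{1}{N-1}$ of the total number $(N-1)!$ of recursive trees on $N$ vertices. Equivalently, in the notation already set up, the claim is that $\sum_{\ell} R(N,\ell,x) = (N-2)!$ for every admissible $x$. First I would recall the standard fact that the number of recursive trees (increasing Cayley trees) on $N$ vertices is $(N-1)!$; this follows from the usual insertion argument, where vertex $k$ (for $k = 1, \dots, N-1$) is attached in turn to any of the $k$ previously placed vertices. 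So it suffices to show the count $(N-2)!$ for a single fixed $x$, say $x = N-1$, and then argue the count is the same for all $x$.

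The cleanest route is induction on $N$ using the recursion already established in Theorem~\ref{thm:recursiontrees}. Summing Eq.~(\ref{eq:Rrecurrence}) over $\ell$, and writing $S(N,x) = \sum_\ell R(N,\ell,x)$ for the quantity we want to evaluate, the recursion collapses (the distinction between the $\ell-1$ and $\ell$ terms disappears once we sum over $\ell$) to something like
\[
S(N,x) = \sum_{i=\max(x,2)}^{N-2} S(N-1,i) + \sum_{i=1}^{\max(x-1,1)} S(N-1,i).
\]
For $x \ge 2$ this is $\sum_{i=x}^{N-2} S(N-1,i) + \sum_{i=1}^{x-1} S(N-1,i) = \sum_{i=1}^{N-2} S(N-1,i)$, which is manifestly independent of $x$; and for $x=1$ it is $\sum_{i=2}^{N-2} S(N-1,i) + S(N-1,1) = \sum_{i=1}^{N-2} S(N-1,i)$, the same sum. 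By the inductive hypothesis each $S(N-1,i) = (N-3)!$ for $i$ in the range $1 \le i \le N-2$, and there are $N-2$ such values, so $S(N,x) = (N-2)\cdot(N-3)! = (N-2)!$, completing the induction. The base case $N=2$ is $R(2,2,1)=1=0!$, which is noted in the excerpt just before Theorem~\ref{thm:recursiontrees}.

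Alternatively — and this is worth including as it is more illuminating — one can give a direct bijective proof: there is a bijection between recursive trees on $N$ vertices whose smallest rooted path ends at $x$ and, say, the set of permutations of $\{2,\dots,N-1\}$, or between the sets corresponding to two different values $x, x'$, by surgery along the smallest rooted path. I expect the main obstacle to be bookkeeping the boundary cases of the summation (the $\max(x,2)$ and $\max(x-1,1)$ truncations, and the vanishing $B(N,\ell,1)=0$ behaviour at $x=1$) to verify that the $x$-dependence really does cancel for every $x$ in the stated range and that nothing is lost at the endpoints $x=1$ and $x=N-1$; once the $\ell$-sum is carried out this cancellation is clean, but it must be checked that the ranges of $i$ over which $S(N-1,i)$ is supported match the inductive claim exactly, with no off-by-one error at $i = N-2$.
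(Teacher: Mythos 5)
Your argument is correct, but it takes a genuinely different route from the paper. The paper proves Lemma~\ref{lem:smallestPathEnd} by an insertion/probabilistic induction: a uniformly random recursive tree on $N+1$ vertices is obtained by attaching the vertex labeled $N$ uniformly to one of the $N$ vertices of a uniformly random tree on $N$ vertices; since $N$ is the largest label, the endpoint of the smallest rooted path changes (to $N$) exactly when the new vertex attaches to the current endpoint, which happens with probability $\frac{1}{N}$, and the inductive hypothesis then distributes the remaining mass equally over the values below $N$. Your route instead sums the already-established recursion Eq.~(\ref{eq:Rrecurrence}) over $\ell$: writing $S(N,x)=\sum_\ell R(N,\ell,x)$, the two truncated sums in the recursion splice together into $\sum_{i=1}^{N-2}S(N-1,i)$ for every $x$ (including the boundary cases $x=1$ and $x=N-1$, where one of the two sums is respectively shortened or empty), so $S(N,x)$ is manifestly independent of $x$ and the induction gives $(N-2)\cdot(N-3)!=(N-2)!$. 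Both are valid and the base case $R(2,2,1)=1$ is handled identically. What your approach buys is that, given Theorem~\ref{thm:recursiontrees}, the $x$-independence is an immediate algebraic cancellation with no probabilistic modeling needed; what the paper's approach buys is self-containedness (it does not rely on the recursion) and a direct structural explanation for \emph{why} the endpoint of the smallest rooted path is uniformly distributed, namely that the largest label is attached uniformly at random. Your sketched bijective alternative by ``surgery along the smallest rooted path'' is not carried out and should not be counted as part of the proof, but the recursion-based argument stands on its own.
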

\begin{proof}
For $N=2$, there is one recursive tree: a path connecting two vertices labeled 0 and 1. The smallest rooted path of this tree ends in 1. Thus, the statement holds.

We proceed with induction. We claim that if there is an equal probability, namely $\frac{1}{N-1}$, of some randomly and uniformly selected recursive tree of size $N$ to have its smallest rooted path end in $x$ for each of $1\le x\le N-1$, then we can say the same for a recursive tree of size $N+1$. This finishes the proof, as the number of recursive trees of size $N$ and with smallest rooted path ending in $x$ would be $\frac{1}{N-1}(N-1)!=(N-2)!$.

Choosing a random recursive tree of size $N+1$ is the same as choosing a random recursive tree of size $N$ with the smallest rooted path ending in $x$ and attaching the vertex with label $N$ randomly and uniformly to one of the other $N$ vertices. Because $N$ has a label larger than all other vertices, the probability that $N$ does not attach to vertex $x$ and does not change the value at the end of the smallest rooted path is $\frac{N-1}{N}$, whereas the probability that it does attach to vertex $x$ and become the new value at the end of the smallest rooted path is $\frac{1}{N}$. And by our inductive assumption, the probability that the value at the end of the smallest rooted path is less than $N$ is also equal to $\frac{1}{N-1}\cdot \frac{N-1}{N}=\frac{1}{N}$ for each distinct positive integer less than $N$.
\end{proof}

\begin{theorem}
Conjectures~\ref{conj:flatclusteronprobabilities} and~\ref{conj:fsprobabilityrecur} follow from Conjecture~\ref{conj:probBijection}.
\end{theorem}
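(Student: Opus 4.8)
The plan is to derive Conjectures~\ref{conj:flatclusteronprobabilities} and~\ref{conj:fsprobabilityrecur} from Conjecture~\ref{conj:probBijection} purely formally, using the combinatorial facts about $R(N,\ell,x)$ already established, chiefly Lemma~\ref{lem:smallestPathEnd} and Theorem~\ref{thm:recursiontrees}. Throughout I assume Conjecture~\ref{conj:probBijection}, i.e. the identity $R(N,\ell,x) = P(N,K)\cdot(N-1)!$ under the stated correspondence $K \leftrightarrow (\ell,x)$, together with the fact (recorded right after the conjecture) that this correspondence is a bijection between the admissible sumtroids $K$ (those with $K \not\equiv M \pmod N$ in the range $-\binom{N-1}{2}\le K \le \binom{N-1}{2}$) and the pairs $(\ell,x)$ with $1\le x\le N-1$.

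First I would prove Conjecture~\ref{conj:flatclusteronprobabilities}. Recall from Corollary~\ref{col:cogcongruency} and Proposition~\ref{prop:centroidEnumeration} that the final states with shadow $F_{N,r}$ are exactly those whose sumtroid $K$ satisfies $K \equiv $ (a fixed residue depending on $r$ and the parity of $N$) $\pmod N$; more precisely, from the proof of Lemma~\ref{lem:zeroState}, $K \equiv M - r \pmod N$ when $N$ is even and $K \equiv -r \pmod N$ when $N$ is odd, so the shadow $F_{N,r}$ corresponds to a single residue class mod $N$. Hence the probability of ending with shadow $F_{N,r}$ is $\sum_{K \equiv c_r} P(N,K)$, the sum running over all admissible sumtroids in that residue class. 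Under Conjecture~\ref{conj:probBijection}, the shift $K \mapsto K + \tfrac{(N-2)(N-1)}{2}$ sends this residue class to a fixed value of $x$ (the remark after the conjecture states $x + r \equiv 2 \pmod{N-1}$, so fixing $r$ fixes $x$), while $\ell$ ranges over all its possible values. Therefore
\[
(N-1)!\sum_{K \equiv c_r} P(N,K) \;=\; \sum_{\ell} R(N,\ell,x) \;=\; \#\{\text{recursive trees on }N\text{ vertices with smallest rooted path ending in }x\},
\]
which by Lemma~\ref{lem:smallestPathEnd} equals $(N-2)!$, independently of $x$. Dividing by $(N-1)!$ gives probability $\tfrac{1}{N-1}$, and the equivalent sumtroid/centroid formulation follows from the bijection $x \leftrightarrow r$.

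Next I would prove Conjecture~\ref{conj:fsprobabilityrecur}. Here the point is simply to translate the tree recursion of Theorem~\ref{thm:recursiontrees},
\[
R(N,\ell,x) = \sum_{i=\max(x,2)}^{N-2} R(N-1,\ell-1,i) + \sum_{i=1}^{\max(x-1,1)} R(N-1,\ell,i),
\]
through the dictionary $(N,\ell,x) \leftrightarrow (N,K)$ given by the closed forms displayed after Conjecture~\ref{conj:probBijection}, namely $K = -\tfrac{(N-2)(N-1)}{2} + (\ell-2)N + (x-1) + \min(x-1,1)$. Dividing both sides by $(N-1)! = (N-1)\cdot(N-2)!$ and applying Conjecture~\ref{conj:probBijection} on each side converts $R(N,\ell,x)$ into $(N-1)!\,P(N,K)$ and each summand $R(N-1,\cdot,i)$ into $(N-2)!\,P(N-1,\cdot)$. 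One then checks that as $i$ runs over the index ranges in Theorem~\ref{thm:recursiontrees} — the $\ell-1$ terms with the first multiplicity and the $\ell$ terms with the second, jointly $N-2$ terms after accounting for the boundary truncations at $i=1,2$ and $i=N-2$ — the corresponding sumtroid values $i'$ (for the $(N-1)$-violinist problem) form exactly the contiguous block $K - \tfrac{N-1}{2} - A \le i' \le K + \tfrac{N-1}{2} - A - 1$ of length $N-1$, with the gap coming from the excluded residue $i' \equiv M' \pmod{N-1}$ accounting for the difference between $N-1$ index values and $N-2$ nonzero trees, and with $A = \lfloor \tfrac1N(K+M)\rfloor - \tfrac{1+(-1)^N}{4}$ being precisely the offset induced by the floor in the formula for $\ell$ together with the half-integer shift when parities of $N$ and $N-1$ differ. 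This is an index-bookkeeping computation: the main obstacle is getting the truncation at the ends of the summation (the $\max(x,2)$ and $\max(x-1,1)$, and the jump in $\ell$ between the two sums when $x=1$ versus $x>1$) to match exactly the insertion of the single zero dictated by Lemma~\ref{lem:zeroState}, and handling the even/odd-$N$ parity shift of the room labels consistently; I would organize this by treating the cases $x>1$ and $x=1$ separately, mirroring the structure of the proof of Theorem~\ref{thm:recursiontrees} itself.

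Finally, since Conjecture~\ref{conj:fsprobabilityrecur} together with the base cases for small $N$ (directly verified, e.g. row $N=3$ of Table~\ref{tab:probRef1}) determines the $P(N,K)$ uniquely, and since the recursion is the image of the tree recursion which is satisfied by the values $\tfrac{1}{N-1}$-type data consistent with Conjecture~\ref{conj:flatclusteronprobabilities}, all three conjectures are equivalent under Conjecture~\ref{conj:probBijection}; in particular the two named in the theorem statement follow. I expect no genuine mathematical difficulty here beyond the index arithmetic described above, since every structural input — the classification of final shadows, the centroid enumeration, the tree recursion, and the uniform-endpoint lemma — is already in hand.
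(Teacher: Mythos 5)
Your proposal is correct and follows essentially the same route as the paper: the uniformity of final shadows is obtained by summing $P(N,K)$ over a residue class of $K$ modulo $N$, identifying this sum with $\frac{1}{(N-1)!}\sum_{\ell} R(N,\ell,x)$ for the fixed $x$ determined by that class, and invoking Lemma~\ref{lem:smallestPathEnd} to get $(N-2)!/(N-1)! = \frac{1}{N-1}$; the second conjecture is likewise obtained by transporting the recursion of Theorem~\ref{thm:recursiontrees} through the dictionary of Conjecture~\ref{conj:probBijection}. Your bookkeeping is actually more explicit than the paper's one-line treatment of the second part, and it correctly surfaces that the transported recursion carries a normalizing factor $\frac{1}{N-1}$ (since $R(N,\ell,x)=(N-1)!\,P(N,K)$ while each summand is $(N-2)!\,P(N-1,\cdot)$), a factor that the literal statement of Conjecture~\ref{conj:fsprobabilityrecur} suppresses because the figure and tables work with the rescaled values $(N-1)!\,P(N,K)$.
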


\begin{proof}
    Assuming that Conjecture~\ref{conj:probBijection} is true, we have that
    $$\sum_{i \equiv r \pmod{N}} P(N,i) = \frac{1}{(N-1)!}\sum_{j=2}^{N-1} R(N,j,x),$$
    where $x$ is defined as in Conjecture~\ref{conj:probBijection}.
    But by Lemma~\ref{lem:smallestPathEnd}, the summation on the right-hand side of this equation equals $(N-2)!$, which gives that the expression on the right-hand side equals $\frac{1}{(N-1)!}(N-2)!=\frac{1}{N-1}$.
    Conjecture~\ref{conj:fsprobabilityrecur} follows immediately from Theorem~\ref{thm:recursiontrees} and applying the bijection from Conjecture~\ref{conj:probBijection}.
\end{proof}

The numbers $R(N,\ell,x)$ appear in the setting of permutations on $[N-1]$, where $N-1$ represents the size of the permutation. Namely, the number of permutations of $N-1$ elements with $\ell-1$ descents ending in $x-1$ equals $R(N,\ell,x)$. These numbers are the same because they follow the same recursion and have the same initial values \cite{Conger}. In the next section, we connect them directly.

\section{Permutations}
\label{sec:permutations}

\subsection{Preliminary definitions}

We start with some definitions that are used throughout this section. 

Given a permutation $\sigma$ of $n$ elements, an \textit{ascent} is any position $i < n$, where the following value is bigger than the current one. That is, if $\sigma = \sigma_1\sigma_2 \ldots \sigma_n$, then $i$ is an ascent if $\sigma_i < \sigma_{i+1}$

Similarly, a \textit{descent} is a position $i < n$ with $\sigma_i > \sigma_{i+1}$. A \textit{big descent} is a position $i < n$ with $\sigma_i - \sigma_{i+1} \geq 2$.

\subsection{Bijection between recursive trees and permutations}

In this subsection, we describe the bijection between recursive trees and permutations.

Given a recursive tree, start with the root, which has label zero. Find the root's largest child and write down the number it is labeled with. Consider the subtree starting with the largest child and repeat recursively. After finishing iterating over all points in the subtree, go to the next largest child. We know that this map is a bijection by \cite{Stanley}. Moreover, the end of the smallest rooted path corresponds to the last digit in the corresponding permutation.

We need an extra definition for the proof of Theorem~\ref{thm:permutations2trees}. We call index $i$, a \textit{special descent} of a given permutation, if $i > 0$ and is a descent in the permutation, or if $i = 0$, and the permutation starts with 1.

We now show the numbers $R(N,\ell,x)$ relate to permutation due to \cite{Conger}. In the next theorem, we produce an explicit bijection relating recursive trees to permutation.

\begin{theorem}
\label{thm:permutations2trees}
The number of permutations of order $N-1$ with $\ell-1$ descents and last digit $x-1$ equals $R(N,\ell,x)$ --- the number of recursive trees with $N$ vertices, $\ell$ leaves, and the smallest rooted path ending in $x$.
 \end{theorem}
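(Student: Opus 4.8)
The plan is to exhibit an explicit bijection $\Phi$ between permutations of $[N-1]$ and recursive trees on $N$ vertices, and then to check that $\Phi$ sends descents to leaves (with the correct off-by-one) and the last digit to the end of the smallest rooted path. A natural candidate for $\Phi$ is essentially the inverse of the tree-to-permutation map already described in the excerpt (read off children from largest to smallest, recursing into each subtree). Concretely, given a permutation $\sigma = \sigma_1 \cdots \sigma_{N-1}$ of $[N-1]$, I would build a tree on $\{0,1,\dots,N-1\}$ by inserting the values $1, 2, \dots, N-1$ one at a time (or by directly parsing $\sigma$ via a recursive "record the current maximum prefix" procedure), attaching each new vertex so that the resulting word, when the tree is traversed largest-child-first, reproduces $\sigma$. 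I would first verify that this $\Phi$ is well-defined and bijective; since the excerpt grants that the tree-to-permutation traversal is a bijection (citing Stanley), the cleanest route is to define $\Phi$ as its inverse and note that the number of recursive trees on $N$ vertices equals $(N-1)!$, matching the count of permutations.

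Next I would track the two statistics. For the last digit: the excerpt already asserts that the end of the smallest rooted path corresponds to the last digit of the permutation under the traversal map, so under $\Phi$ the permutation ending in $x-1$ corresponds exactly to the tree whose smallest rooted path ends in $x$ (the shift by one coming from the tree being labeled $0,\dots,N-1$ while the permutation uses $0,\dots,N-2$, i.e. $[N-1]$ shifted). For the descents: here is where the notion of \emph{special descent} introduced just before the theorem does the work. I would show that under the traversal map, a leaf of the tree other than the root corresponds to a descent of $\sigma$ at some position $i>0$, and the root being a leaf corresponds to $\sigma$ starting with $1$ — i.e. leaves of the tree correspond bijectively to special descents of $\sigma$. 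Then $\ell$ leaves $\leftrightarrow$ $\ell$ special descents. Finally I would reconcile "$\ell$ special descents" with "$\ell-1$ ordinary descents": a permutation of $[N-1]$ that begins with $1$ has one special descent coming from position $0$ and hence $\ell-1$ ordinary descents, while a permutation not beginning with $1$ has $\ell$ ordinary descents — but one must check these two cases are exactly distinguished by whether the root is a leaf, so that in both cases the recursive-tree count with $\ell$ leaves matches the permutation count with $\ell-1$ descents. (Equivalently: I expect the bijection to be set up so that a leaf always corresponds to a descent after a fixed reindexing, and the "$+1$" is absorbed once by the root/first-entry convention.)

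The main obstacle I anticipate is the bookkeeping around the leaf-versus-descent correspondence at the root, i.e. getting the special-descent definition to line up so that "number of leaves" equals "number of descents plus one" uniformly, rather than having annoying parity or boundary exceptions. Verifying that each internal (non-leaf) vertex, when its subtrees are concatenated largest-first, contributes exactly one \emph{ascent} at the junction with its parent's earlier material — and dually that each leaf contributes a descent there — requires a careful induction on subtree structure; I would prove it by induction on $N$, peeling off the largest label $N-1$ (a leaf, attached to some vertex) and checking how removing it changes both the tree's leaf count and the permutation's descent count in parallel. A secondary, lighter obstacle is simply making all the index shifts ($x$ vs $x-1$, $\ell$ vs $\ell-1$, $N$ vs $N-1$) consistent throughout; I would fix conventions once at the start of the proof and refer back to the already-established facts (the traversal bijection from Stanley, and the recursion of Theorem~\ref{thm:recursiontrees} as a consistency check) rather than re-deriving anything.
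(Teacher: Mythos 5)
Your traversal bijection and the leaf--special-descent correspondence are exactly the paper's Lemma~\ref{lemma:permutations2trees}: trees with $N$ vertices, $\ell$ leaves, and smallest rooted path ending in $x$ are in bijection with permutations of order $N-1$ having $\ell-1$ \emph{special} descents and last digit $x$. But that is not the theorem. The theorem concerns \emph{ordinary} descents and last digit $x-1$, and the passage between the two statistics is not bookkeeping that can be ``absorbed once by the root/first-entry convention'' --- it is a genuine equinumerosity between two different sets of permutations. Concretely: a permutation with $\ell-1$ ordinary descents has $\ell-1$ special descents if it does not start with $1$ and $\ell$ special descents if it does, so under your map the permutations counted by the theorem correspond to trees with $\ell$ leaves in the first case and $\ell+1$ leaves in the second. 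No single leaf count lines up, and your proposed case analysis (``these two cases are exactly distinguished by whether the root is a leaf'') does not repair this. Likewise, the last-digit shift from $x$ to $x-1$ is not a relabeling artifact of the vertex set $\{0,\dots,N-1\}$: the traversal word literally ends in the label $x$, and the $x-1$ in the theorem arises from a subsequent operation on a \emph{different} permutation.

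The paper closes this gap by a chain you do not have: it prepends a $2$ (incrementing entries $\ge 2$) to turn special descents of an order-$(N-1)$ permutation into ordinary descents of an order-$N$ permutation starting with $2$ and ending in $x+1$; it then invokes Conger's nontrivial result that permutations of order $N$ with a given number of descents and first digit $2$, last digit $x+1$, are equinumerous with those having first digit $1$ and last digit $x$; finally it strips the leading $1$ and decrements to land on order $N-1$, ordinary descents, last digit $x-1$. Without Conger's identity (or an explicit bijection replacing it), your argument proves the lemma but not the theorem.
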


 We prove this in several steps. We start with the following lemma.

 \begin{lemma}
\label{lemma:permutations2trees}
The number of permutations of order $N-1$ with $\ell-1$ special descents and last digit $x$ equals $R(N,\ell,x)$ --- the number of recursive trees with $N$ vertices, $\ell$ leaves, and the smallest rooted path ending in $x$.
 \end{lemma}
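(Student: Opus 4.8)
\textbf{Proof plan for Lemma~\ref{lemma:permutations2trees}.}

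The plan is to set up a bijection directly between recursive trees on $N$ vertices and permutations of $\{1,2,\dots,N-1\}$ (which I will regard as words of length $N-1$ on the alphabet $\{1,\dots,N-1\}$), and to check that this bijection sends the statistic ``number of leaves'' to ``number of special descents plus one'' and sends ``end of the smallest rooted path'' to ``last digit.'' For the underlying bijection I would use exactly the depth-first ``largest child first'' traversal already described in the text and credited to Stanley~\cite{Stanley}: from the root (label $0$) one visits the subtree rooted at the largest child, then the next largest, and so on, recording labels as they are first reached; since every root-to-leaf path is increasing, the recorded word is a permutation of $\{1,\dots,N-1\}$, and conversely every such permutation arises uniquely. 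I would reuse the already-stated fact that the end of the smallest rooted path corresponds to the last entry of the word, so the ``last digit $=x$'' half of the claim is immediate and only the leaf-count statistic needs work.

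The heart of the argument is a local analysis of where a leaf of the tree shows up in the traversal word, and symmetrically where a special descent of the word comes from. I would argue as follows. Let $v$ be a non-root leaf of the tree; consider the moment the traversal arrives at $v$. Either $v$ is the \emph{last} child (smallest child) of its parent to be visited, in which case, after recording $v$, the traversal backs up to an ancestor and the next label recorded is smaller than $v$ (or $v$ is the very last entry of the word) --- either way $v$'s position is a descent or the end of the word; or $v$ is not the smallest child of its parent, in which case the next vertex visited is a smaller sibling of $v$, again giving a descent at $v$'s position. So every non-root leaf creates a descent (or sits at the end). Conversely, if position $i$ of the word is a descent, I claim the vertex recorded at position $i$ is a leaf: if it had children, the traversal would next record its largest child, which has a larger label, contradicting the descent. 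This gives a bijection between non-root leaves and descents of the word. Finally the root $0$ is a leaf exactly when it has a unique child; that child is then visited first and, being the root's only child, is the smallest child, so it equals $1$ --- i.e. the word starts with $1$, which is precisely the ``$i=0$'' special descent. Thus leaves of the tree (root included or not) correspond bijectively to special descents of the word, so a tree with $\ell$ leaves maps to a word with $\ell-1$ special descents. Combining with the last-digit correspondence yields $R(N,\ell,x)$ = (number of permutations of $\{1,\dots,N-1\}$ with $\ell-1$ special descents and last digit $x$).

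The main obstacle I anticipate is pinning down the leaf/descent correspondence cleanly at the boundary cases: the last position of the word (which ``should'' behave like a descent for counting purposes but is not literally a descent), the root being or not being a leaf, and making sure the ``$i=0$ and the permutation starts with $1$'' clause in the definition of special descent exactly absorbs the root-leaf case with no double counting. I would handle this by being careful about indexing conventions --- treating the traversal word as $\sigma_1\cdots\sigma_{N-1}$ and checking separately that (i) $\sigma_{N-1}$ is the end of the smallest rooted path and the vertex there is always a leaf (it has no unvisited children left, and actually no children at all since it is reached last along the smallest path), (ii) interior leaves $\leftrightarrow$ interior descents as above, and (iii) the root is a leaf $\iff \sigma_1 = 1$. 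A short induction on $N$ (peeling off the vertex labeled $N-1$, which is always a leaf of the tree and always at a descent or the end of the word) is an alternative packaging of the same idea and may be the crispest way to write it; I would likely present the direct bijective description as the main proof and mention the induction as a sanity check. Theorem~\ref{thm:permutations2trees} then follows from Lemma~\ref{lemma:permutations2trees} by a separate (and, I expect, easy) argument converting ``special descents'' back to ordinary ``descents'' and shifting the last-digit label from $x$ to $x-1$, presumably via the map $\sigma \mapsto$ (subtract $1$ from every entry, reduce mod $N-1$ into $\{0,\dots,N-2\}$) or a similar rotation, which I would verify interacts correctly with the descent statistic.
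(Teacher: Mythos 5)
Your argument is correct and follows essentially the same route as the paper: the largest-child-first traversal bijection, with non-root leaves matched to descents (plus the final position) and the root-being-a-leaf case absorbed by the ``starts with $1$'' clause of a special descent. The paper's own proof is just a terser statement of exactly this correspondence, so your more careful boundary-case analysis is a faithful (and more complete) version of it.
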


\begin{proof}
Following our bijection between trees and permutations, we see that each descent corresponds to an extra leaf, as we create an extra branch. The number of leaves is the number of descents plus 1. In addition, we have one extra leaf if the permutation starts with 1, as the root forms a leaf in this case. And as we know, the end of the smallest rooted path of a tree corresponds to the last digit in the corresponding permutation. Thus, $R(N,\ell,x)$ equals the number of permutations of order $N-1$ with $\ell-1$ special descents and last digit $x$.
\end{proof}

Table~\ref{tab:correspondN5} shows the permutations that correspond to the given sumtroids for $N=5$, using the algorithm described above. Each ordered pair $(\ell,x)$ in the second row means the permutation has $\ell$ special descents and ends with number $x$.

\begin{table}[ht!]
\centering
\begin{tabular}{|c|c|c|c|c|c|c|c|c|c|}
\hline $-6$ & $-4$ & $-3$ & $-2$ & $-1$ & $1$ & $2$ & $3$ & $4$ & $6$\\
\hline (1,1) & (1,2) & (1,3) & (1,4) & (2,1) & (2,2) & (2,3) & (2,4) & (3,1) & (3,2)\\
\hline
\centering
2341 & 3412 & 4123 & 1234 & 3421 & 4312 & 4213 & 3214 & 4321 & 1432\\
     &      & 2413 & 2314 & 3241  & 4132 & 2143 & 1324 &      & \\
     &      &      & 3124 & 4231  & 3142 & 1423 &      &      & \\
     &      &      & 2134 & 2431  & 1342 & 1243 &      &      &    \\ \hline
\end{tabular}
\caption{A corresponding table to Table~\ref{tab:bijectionN5(2)}, but with permutations and descents.}
\label{tab:correspondN5}
\end{table}

Interestingly, 2-Eulerian numbers appear in permutations in many ways, see a comment to A120434 in OEIS \cite{OEIS}. We discuss one of the ways in this subsection.

It is known that $T(N,k)$ gives the number of permutations of size $N+1$ starting with 2 and having $k+1$ descents \cite{Conger}. We can expend this to our numbers.

\begin{lemma}
\label{lemma:permutationsstartingwith2}
The numbers $R(N,\ell,i)$ can be described as the number of permutations of order $N$ starting with 2, with $\ell-1$ descents ending in $i+1$, if $i > 1$. Numbers $R(N,\ell,1)$ can be described as the number of permutations of order $N$ starting with 2, with $\ell-1$ descents ending in $1$.
\end{lemma}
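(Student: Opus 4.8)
The statement of Lemma~\ref{lemma:permutationsstartingwith2} relates the numbers $R(N,\ell,i)$ to permutations of order $N$ \emph{starting with $2$}, while Lemma~\ref{lemma:permutations2trees} already identifies $R(N,\ell,x)$ with permutations of order $N-1$ with $\ell-1$ special descents ending in $x$. The natural approach is therefore to produce a bijection between permutations of order $N-1$ counted by $R(N,\ell,x)$ via special descents, and permutations of order $N$ starting with $2$ counted by ordinary descents and last digit. First I would set up the map: given a permutation $\tau = \tau_1\tau_2\cdots\tau_{N-1}$ of order $N-1$, form a permutation $\sigma$ of order $N$ by prepending $2$ and relabelling. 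The key is to handle the special descent at position $0$ correctly --- recall a special descent at $0$ means $\tau$ starts with $1$. When $\tau$ starts with $1$, prepending $2$ creates a genuine descent $2 > 1$ (after shifting the small entries up), so the $\ell-1$ special descents of $\tau$ become exactly $\ell-1$ ordinary descents of $\sigma$; when $\tau$ does not start with $1$, prepending $2$ (with the entry $1$ of $\tau$ kept as $1$ and entries $\ge 2$ shifted up by $1$) creates an ascent $2 < \tau_1+1$, so again the descent count is preserved.

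More precisely, I would define the map as follows. Given $\tau$ on $\{1,\dots,N-1\}$, let $\sigma$ on $\{1,\dots,N\}$ be: $\sigma_1 = 2$, and for $j\ge 2$, $\sigma_j = \tau_{j-1}$ if $\tau_{j-1} = 1$, and $\sigma_j = \tau_{j-1}+1$ if $\tau_{j-1}\ge 2$. One checks this is a bijection from permutations of order $N-1$ onto permutations of order $N$ that start with $2$, with inverse obtained by deleting the leading $2$ and shifting entries $\ge 3$ down by one. The main verification steps are: (i) the number of descents of $\sigma$ equals the number of special descents of $\tau$ (the position-$1$-to-$2$ junction contributes a descent exactly when $\tau$ starts with $1$, i.e. exactly when $\tau$ has a special descent at $0$, and all internal junctions preserve their descent/ascent status because the relabelling $x\mapsto x$ for $x=1$, $x\mapsto x+1$ for $x\ge 2$ is order-preserving on $\{1,\dots,N-1\}$); and (ii) the last digit is tracked correctly: $\sigma_N = \tau_{N-1}$ if $\tau_{N-1}=1$ and $\sigma_N = \tau_{N-1}+1$ if $\tau_{N-1}\ge 2$. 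Combining (ii) with Lemma~\ref{lemma:permutations2trees}, a permutation $\tau$ with last digit $x$ maps to $\sigma$ with last digit $x$ if $x=1$ and last digit $x+1$ if $x>1$, which is exactly the indexing asserted in the lemma.

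The one delicate point --- and what I expect to be the main obstacle --- is confirming that the descent at the leading junction behaves as claimed in the boundary case and that no descent is accidentally created or destroyed by the relabelling at the position where the entry $1$ sits. Since $1$ is mapped to itself while everything else is bumped up by one, an internal junction $\tau_{j-1}, \tau_j$ could in principle change from ascent to descent only if exactly one of the two entries equals $1$; but $1$ is the global minimum, so if $\tau_{j-1}=1$ the junction is an ascent both before and after relabelling, and if $\tau_j = 1$ it is a descent both before and after. Hence the relabelling is genuinely order-preserving at every internal junction, and only the new leading junction can contribute a descent. Once this is pinned down, the count of descents of $\sigma$ equals the count of special descents of $\tau$, and the lemma follows by transporting Lemma~\ref{lemma:permutations2trees} through the bijection. (The case $i>1$ versus $i=1$ in the statement is then just the bookkeeping of whether the last digit got shifted, already handled in step (ii).)
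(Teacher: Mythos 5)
Your proof is correct and is essentially the paper's argument: the paper deletes the leading $2$ and lowers all entries greater than $2$ by one, which is precisely the inverse of your prepend-and-relabel map, and both rely on the same two observations (internal descents are preserved because the relabelling is order-preserving, and the leading junction contributes a descent exactly when the permutation of order $N-1$ starts with $1$, i.e.\ has a special descent at position $0$). The last-digit bookkeeping, splitting the cases $x=1$ and $x>1$, also matches the paper's treatment.
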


\begin{proof}
By Lemma~\ref{lemma:permutations2trees}, the number of permutations of order $N$ with $\ell$ special descents ending in $i$ is 
the same as the number of recursive trees with $N+1$ vertices, $\ell+1$ leaves, and the smallest rooted path ending in $x$, which by definition is $R(N+1,\ell+1,x)$.

Consider a permutation of order $N+1$ starting with 2 and having $\ell$ descents. Suppose we remove the starting number and lower all other numbers in the permutation that are greater than 2 by 1. We get a permutation of order $N$.
    
Now we see what happens with descents. Ignoring the first two numbers in our permutations, ascents and descents stay in place. In addition, if the second number of the original permutation was 1, we lose a descent. Thus, the number of descents in the original permutation equals the number of special descents in the new one.

This tells us that the number of permutations of order $N+1$ starting with 2, having $\ell$ descents and ending with $x > 2$ is the same as the number of permutations of order $N$ with $\ell$ special descents and ending in $x-1$. Thus, this number is $R(N+1,\ell+1,x-1)$.

Similarly, the number of permutations of order $N+1$ starting with 2, having $\ell$ descents, and ending with $1$ is the same as that of order $N$ with $\ell$ special descents and ending in $1$. Thus, this number is $R(N+1,\ell+1,1)$. The lemma follows.
\end{proof}

\begin{proof}[Proof of Theorem~\ref{thm:permutations2trees}.]
    Using Lemma~\ref{lemma:permutationsstartingwith2}, number $R(N,\ell,x)$ equals the number of permutations of order $N$ with $\ell-1$ descents with first digit $2$ and last digit $x+1$. The number of such permutations equals the number of permutations of order $N$ with $\ell-1$ descents with first digit $1$ and last digit $x$ by \cite{Conger}, after which dropping the ``1'' at the start and decreasing all numbers in the permutation by $1$ gives us that this is equal to the number of permutations of order $N-1$ with $\ell-1$ descents last digit $x-1$, as desired.
\end{proof}

Conjecture~\ref{conj:flatclusteronprobabilities} in terms of permutations is especially simple. It states that the number of permutations ending in $i$ is the same for any $i$.

\subsection{The bijection changing the sumtroid sign}

We have a natural bijection on the initial state of violinists. If we assume the centroid of the initial state to be zero, then for a given set of moves performed on violinists in which the $i$-th move is made on violinists in positions $a_i$ and $a_i+1$, we can instead perform moves on violinists in positions $-a_i$ and $-a_i-1$. Thus, we also have a bijection on the final states--- changing the sign of the sumtroid will not change the probability that we end up in a final state with that sumtroid.

Now consider the following bijection on permutations: swap 1 and 2, and for $i > 2$, swap $i$ and $n+3-i$. Notice that this is equivalent to showing:
\[R(N,\ell,1) = R(N,N+1 - \ell,2)\]
and for $i > 2$
\[R(N,\ell,i) = R(N,N- \ell,N+2 - i),\]
for which the proof can be found in \cite{Conger}.

\subsection{Eulerian numbers}

Our numbers are related to 2-Eulerian numbers as seen in Eq.(\ref{eq:TNell}). Here, we describe the connection of our numbers $R(N,\ell,i)$ to 1-Eulerian numbers, usually called Eulerian numbers.

We can write an independent recursion for $R(N,\ell,1)$. This recursion is known for permutations \cite{Conger}, but we want to give a proof in terms of recursive trees.

\begin{proposition}
The values $R(N,\ell,1)$ follow the recursion
\[R(N,\ell,1) = (N+1 - \ell)R(N-1,\ell,1) + (\ell - 2)R(N-1,\ell-1,1).\] 
\end{proposition}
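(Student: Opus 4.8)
The plan is to establish the recursion combinatorially using the bijection between recursive trees and permutations (or, more directly, by working with the recursive-tree structure itself), counting trees that contribute to $R(N,\ell,1)$ according to how the largest-labeled vertex $N-1$ was attached. Recall that $R(N,\ell,1)$ counts recursive trees on $N$ vertices with $\ell$ leaves whose smallest rooted path ends in $1$; by the construction in the proof of Theorem~\ref{thm:recursiontrees}, every such tree arises uniquely from a recursive tree on $N-1$ vertices (vertices labeled $0,\dots,N-2$) by adjoining a new vertex labeled $N-1$ to one of its $N-1$ vertices. Since $N-1$ is the maximum label, attaching it to a vertex $v$ never alters the smallest rooted path \emph{except} possibly when $v=0$ and $0$ had no other children; but in our situation the smallest rooted path ends in $1$, so $0$ has a child (namely along the path to $1$), hence attaching $N-1$ anywhere leaves the smallest rooted path ending in $1$. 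Thus every tree counted by $R(N,\ell,1)$ comes from a tree counted by $R(N-1,\ell',1)$ for $\ell'\in\{\ell-1,\ell\}$.

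First I would split the count according to whether the parent $v$ of the new vertex $N-1$ was a leaf in the $(N-1)$-vertex tree. If $v$ was \emph{not} a leaf, then attaching $N-1$ to $v$ creates one new leaf ($N-1$ itself) and destroys none, so the original tree had $\ell-1$ leaves; the number of non-leaf vertices in an $(N-1)$-vertex tree with $\ell-1$ leaves is $(N-1)-(\ell-1)=N-\ell$, giving a contribution of $(N-\ell)\,R(N-1,\ell-1,1)$. If $v$ \emph{was} a leaf, then attaching $N-1$ to $v$ turns $v$ into a non-leaf while $N-1$ becomes a new leaf, so the leaf count is unchanged and the original tree had $\ell$ leaves; however, not every leaf may be used, because the leaf at the end of the smallest rooted path is the vertex $1$, and we must check whether attaching $N-1$ to it is permitted — it is, since $1<N-1$ keeps the path increasing and, as noted, does not change the endpoint. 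So naively this gives $\ell\,R(N-1,\ell,1)$, and I would then correct for over/undercounting by reconciling with the claimed coefficients $(N+1-\ell)$ and $(\ell-2)$.

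The discrepancy between the naive count and the target means the case analysis needs one more refinement: the honest statement must be that when $v$ is a leaf, one must exclude the cases where attaching $N-1$ to a leaf would force a different endpoint of the smallest rooted path or would correspond to a tree already counted with a different $\ell$, and symmetrically when $v$ is non-leaf the vertex $0$ (a non-leaf whose child-set we are enlarging) behaves specially because adding $N-1$ as the \emph{largest} child of $0$ does not affect the smallest rooted path but \emph{does} matter for the "$x=1$" constraint in a way the general recursion of Theorem~\ref{thm:recursiontrees} already encodes. I expect the cleanest route is actually to specialize Eq.~(\ref{eq:Rrecurrence}) at $x=1$, obtaining $R(N,\ell,1)=\sum_{i=2}^{N-2}R(N-1,\ell-1,i)+R(N-1,\ell,1)$, and then to use Eq.~(\ref{eq:BSumtoA1}), Eq.~(\ref{eq:complementarysubsets}), and Lemma~\ref{lem:smallestPathEnd} (which gives $\sum_{i=1}^{N-2}R(N-1,\ell-1,i)$-type totals no longer, but rather $\sum_\ell R(N-1,\ell,i)=(N-3)!$, so I would instead use the $\ell$-refined sums) together with Theorem~\ref{thm:permutations2trees} to translate to the permutation recursion for Eulerian-type numbers ending in $1$, which is classical. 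The main obstacle will be correctly bookkeeping the off-by-one shifts between "leaves" and "special descents / descents" and pinning down exactly why the coefficient is $N+1-\ell$ rather than $N-\ell$: this almost certainly reflects that among the $N-1$ possible attachment sites for vertex $N-1$, exactly one produces a tree we must instead regard as contributing to $R(N-1,\ell-1,1)$ via a collapse (the Eq.~(\ref{eq:rootLeafBijection}) root-collapse), so I would carry out that identification carefully and verify the arithmetic $[(N-\ell)+1]\,R(N-1,\ell,1)$-style regrouping lands on the stated coefficients.
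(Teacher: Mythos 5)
Your overall strategy --- classify the trees counted by $R(N,\ell,1)$ according to whether the parent of the newly attached largest-labeled vertex was a leaf --- is exactly the paper's proof, which is a short direct count. But your execution breaks at the key step: you assert that, because the new vertex carries the maximum label, attaching it to any vertex $v$ leaves the smallest rooted path ending in $1$, the only conceivable exception being $v=0$. That is backwards. Attaching to $0$ is harmless (the new vertex becomes the \emph{largest} child of $0$, so the path still turns toward $1$), but attaching to the vertex labeled $1$ is not: since the smallest rooted path ends at $1$, vertex $1$ is a childless leaf, and giving it the new vertex as a child extends the path so that it now ends at the new vertex rather than at $1$. So vertex $1$ must be excluded as an attachment site --- this is the one restriction the paper's proof states explicitly --- and since vertex $1$ is always among the leaves, it is precisely what turns your leaf-case coefficient from $\ell$ into $\ell-1$. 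There is no further ``reconciliation'' to perform after that, and the detour you sketch through Eq.~(\ref{eq:Rrecurrence}), Eq.~(\ref{eq:BSumtoA1}) and the permutation correspondence is unnecessary; as written, your proposal stops at a plan and completes no version of the argument.

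The residual discrepancy you were trying to engineer away is real, but it is not a defect of the counting argument: the corrected count gives
\[R(N,\ell,1) = (\ell-1)\,R(N-1,\ell,1) + (N-\ell)\,R(N-1,\ell-1,1),\]
and this is what the data supports (for instance $R(4,2,1)=R(4,3,1)=1$ and $R(5,2,1)=1$, $R(5,3,1)=4$, $R(5,4,1)=1$, whereas the displayed coefficients $(N+1-\ell)$ and $(\ell-2)$ would force $R(5,2,1)=4$). The coefficients in the statement appear to be transposed; the paper's own proof text (``attach to any vertex except the vertex labeled $1$; a leaf creates no new leaf, a non-leaf creates one'') yields the $(\ell-1),(N-\ell)$ form, which is the usual Eulerian recurrence. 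So the right move is to exclude vertex $1$, finish the direct count, and flag the stated coefficients --- not to bend the count until it matches them.
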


\begin{proof}
We start with a tree with $N$ vertices with the smallest rooted path ending in 1. Consider what happens when we attach a new vertex labeled $N$ to the tree. For our new tree to still have the smallest rooted path ending in $1$, we can attach $N$ to any vertex, except for the vertex labeled 1. If we attach it to a leaf, we do not create more leaves. If we attach it to a non-leaf, we create an extra leaf. Thus, 
\[R(N,\ell,1) = (N+1 - \ell)R(N-1,\ell,1) + (\ell - 2)R(N-1,\ell-1,1).\] 
\end{proof}

This tells us that numbers $R(N,\ell,1)$ follow the same recursion as Eulerian numbers \cite{Stanley}. Comparing the initial values we get $R(N,\ell,1) = A(N,\ell)$ for $N>2$. See also sequence A008292 in OEIS \cite{OEIS}.

Consider the following corresponding example in terms of permutations.

\begin{example}
Consider the number of permutations of size $n$ with $\ell$ special descents and ending in 1. Such a permutation for $n >1$ cannot start with 1. Thus, the number of such permutations is the number of permutations ending in 1 with $\ell$ descents. If we remove the last number in the permutation and lower all other numbers by 1, we get a permutation of order $n-1$ with $\ell-1$ descents. It is well known \cite{Stanley} that the number of such permutations is an Eulerian number $A(n-1,\ell)$.
\end{example}

\section{Additional Data}
\label{sec:additionaldata}

In this section, we present additional data that we calculated.

\subsection{States with five violinists}

Figure~\ref{fig:5exampletree} shows the left half of the tree diagram for possible moves for 5 violinists. The vertex labels are sumtroids. 
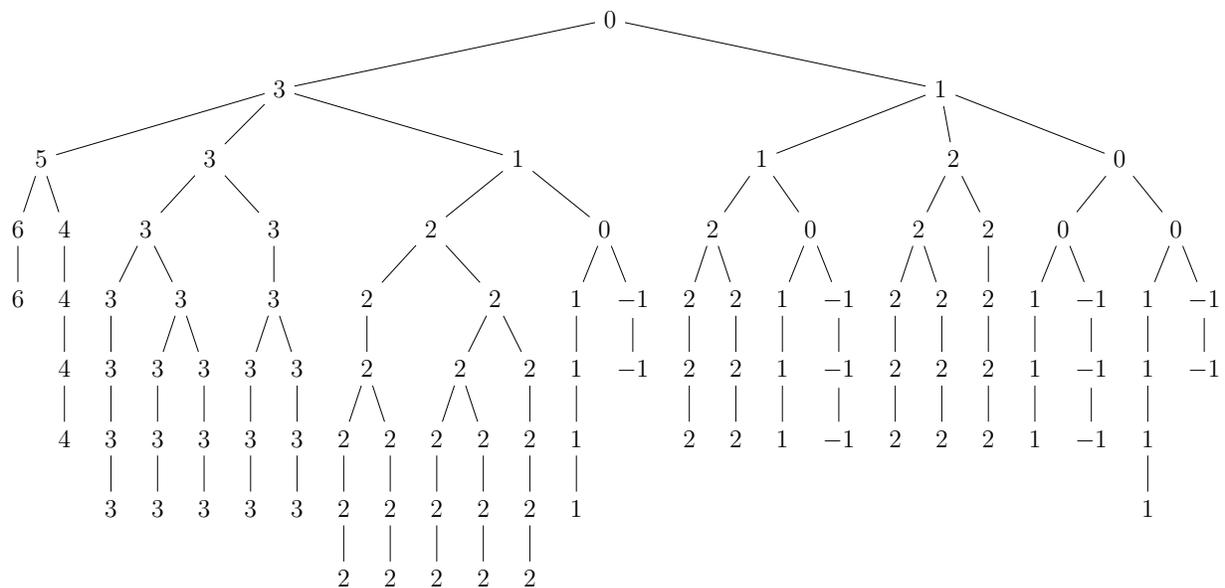
\begin{figure}[ht!]
    \centering
        \resizebox{\textwidth}{!}{
            \begin{forest}
                    [$0$[$3$[$5$[$6$[$6$]][$4$[$4$[$4$[$4$]]]]][$3$[$3$[$3$[$3$[$3$[$3$]]]][$3$[$3$[$3$[$3$]]][$3$[$3$[$3$]]]]][$3$[$3$[$3$[$3$[$3$]]][$3$[$3$[$3$]]]]]][$1$[$2$[$2$[$2$[$2$[$2$[$2$]]][$2$[$2$[$2$]]]]][$2$[$2$[$2$[$2$[$2$]]][$2$[$2$[$2$]]]][$2$[$2$[$2$[$2$]]]]]][$0$[$1$[$1$[$1$[$1$]]]][$-1$[$-1$]]]]][$1$[$1$[$2$[$2$[$2$[$2$]]][$2$[$2$[$2$]]]][$0$[$1$[$1$[$1$]]][$-1$[$-1$[$-1$]]]]][$2$[$2$[$2$[$2$[$2$]]][$2$[$2$[$2$]]]][$2$[$2$[$2$[$2$]]]]][$0$[$0$[$1$[$1$[$1$]]][$-1$[$-1$[$-1$]]]][$0$[$1$[$1$[$1$[$1$]]]][$-1$[$-1$]]]]]
                    ]
            \end{forest}
        }
        \caption{Tree diagram for possible moves for 5 violinists.}       
    \label{fig:5exampletree}
\end{figure}

Figure~\ref{fig:5exampletreenodup} shows the same diagram with the states that are children of locked-in states removed.
\label{ex:fClusteron5Simplified}
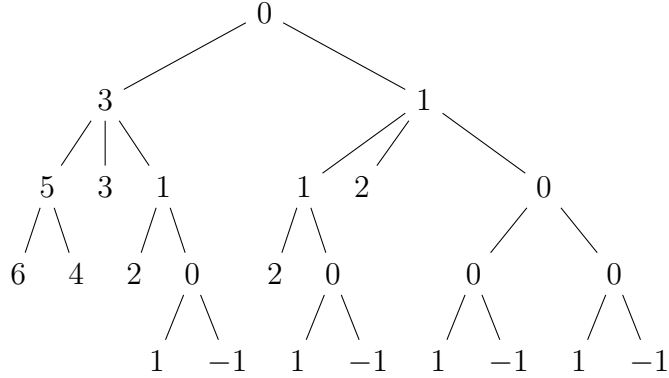
\begin{figure}[ht!]
    \centering
        \begin{forest}
             [$0$[$3$[$5$[$6$][$4$]][$3$][$1$[$2$][$0$[$1$][$-1$]]]][$1$[$1$[$2$][$0$[$1$][$-1$]]][$2$][$0$[$0$[$1$][$-1$]][$0$[$1$][$-1$]]]]
             ]
        \end{forest}
    \caption{Left half of the tree for flat clusteron of size 5 minus duplicates.}
    \label{fig:5exampletreenodup}
\end{figure}

\subsection{Probabilities for 6 to 9 violinists}
We also add the probability of ending with each centroid for $N=6$, $N=7$, $N=8$, and $N=9$ below. The probabilities are multiplied by $(N-1)!$ to turn them into integers. The numbers do not fit into a table, so we present them as a sequence.  

For $N = 6$ we have the following sequence corresponding to sumtroids from $-10$ to 0:
\[1,\ 0,\ 1,\ 2,\ 4,\ 8,\ 11,\ 0,\ 11,\ 14,\ 16.\]

For $N = 7$ we have the following sequence corresponding to sumtroids from $-15$ to 0: 
\[1,\ 0,\ 1,\ 2,\ 4,\ 8,\ 16,\ 26,\ 0,\ 26,\ 36,\ 48,\ 60,\ 66,\ 66,\ 0.\]

For $N = 8$ we have the following sequence corresponding to sumtroids from $-21$ to 0: 
\[1,\ 0,\ 1,\ 2,\ 4\, 8,\ 16,\ 32,\ 57,\ 0,\ 57,\ 82,\ 116,\ 160,\ 212,\ 262,\ 302,\ 0,\ 302,\ 342,\ 372,\ 384.\]

For $N = 9$ we have the following sequence corresponding to sumtroids from $-28$ to 0: 
\[1,\ 0,\ 1,\ 2,\ 4,\ 8,\ 16,\ 32,\ 64,\ 120,\ 0,\ 120,\ 176,\ 256,\ 368,\ 520,\ 716,\ 946,\]
\[1191,\ 0,\ 1191,\ 1436,\ 1696,\ 1952,\ 2176,\ 2336,\ 2416,\ 2416,\ 0.\]

\section{Acknowledgments}
We are grateful to Darij Grinberg for suggesting this project and for helpful consultations. We thank Ira Gessel and Richard Stanley for answering our questions, and to David Dong for helpful comments on our first draft. We thank MIT PRIMES-USA for giving us the opportunity to conduct this research.


\begin{thebibliography}{99}

\bibitem{chipFire2} Richard Anderson, L\'aszl\'o Lov\'asz, Peter Shor, Joel
Spencer, \'Eva Tardos, and Shmuel Winograd, Disks, balls,
and walls: analysis of a combinatorial game, \textit{Amer.\ Math.\ Monthly} 96.6 (1989), 481--493. MR 999411

\bibitem{abelianSandpile1} Per Bak, Chao Tang, and Kurt Wiesenfeld, Self-organized
criticality, \textit{Phys. Rev.} A (3) 38.1 (1988), 364--374.
MR 949160 (89g:58126)

\bibitem{chipFire3} Anders Bj\"orner, L\'aszl\'o Lov\'asz, and Peter W.~Shor, Chip-firing games on graphs, \textit{European J. Combin}. 12.4 (1991), 283--291. MR 1120415 (92g:90193)

\bibitem{Conger} Conger, M. A. (2010). A refinement of the Eulerian numbers, and the joint distribution of $\pi$(1) and Des($\pi$) in
Sn. Ars Combin., 95, 445–472.

\bibitem{DarijGrinberg2021} Darij Grinberg, Math 235: Mathematical Problem Solving, unpublished manuscript, available at \url{https://www.cip.ifi.lmu.de/~grinberg/t/20f/mps.pdf}, 2021.

\bibitem{abelianSandpile2} Deepak Dhar, Self-organized critical state of sandpile automaton models, \textit{Phys.\ Rev.\ Lett.} 64.14 (1990), 1613--1616. MR 1044086 (90m:82053)

\bibitem{sortChipFire} Sam Hopkins, Thomas McConville, and James Propp, Sorting via chip-firing, \textit{Electron.\ J.\ Comb.}, 24, (2016).

\bibitem{confluence1} M.~H.~A.~Newman. On theories with a combinatorial definition of “equivalence”, \textit{Ann.\ Math.}, 43 (1942) 223--243.

\bibitem{OEIS} OEIS Foundation Inc. (2023), The On-Line Encyclopedia of Integer Sequences, Published electronically at \url{https://oeis.org.}

\bibitem{chipFireStart} J.~Spencer, Balancing vectors in the max norm, \textit{Combinatorica} 6.1 (1986), 55--65. MR 856644

\bibitem{Stanley} Richard, P.~Stanley, \textit{Enumerative Combinatorics}. Cambridge studies in Advanced Mathematics 1 (2011).
\end{thebibliography}
\end{document}